\newcounter{hipotese}
\theoremstyle{plain}
\newtheorem{Teo}{Theorem}[section]
\newtheorem{Ex}[Teo]{Example}
\newtheorem{Lema}[Teo]{Lemma}
\newtheorem{Prop}[Teo]{Proposition}
\newtheorem{remark}[Teo]{Remark}
\newtheorem{Cor}[Teo]{Corollary}
\newtheorem{maintheorem}{Theorem}
\theoremstyle{remark}
\newcommand{\diam}{\operatorname{diam}}
\newcommand{\Var}{\operatorname{Var}}
\newcommand{\n}{\widetilde{N}}
\newcommand{\nn}{\widetilde{n}}
\newcommand{\eps}{\varepsilon}
\newcommand{\de}{\delta}
\newcommand{\vi}{\vartheta}
\newcommand{\N}{\mathbb{N}}
\begin{document}

\title[Equilibrium states for non-uniformly hyperbolic systems]{Equilibrium states for non-uniformly hyperbolic systems: statistical properties and  analyticity}

\author[S. M. Afonso]{S. M. Afonso}
\address{Suzete Maria Afonso\\ Universidade Estadual Paulista (UNESP), Instituto de Geoci\^{e}ncias e Ci\^{e}ncias Exatas, C\^ampus de Rio Claro,  Avenida 24-A, 1515, Bela Vista, Rio Claro, S\~ao Paulo,\\ 13506-900, Brazil}
\email{s.afonso@unesp.br} 

\author[V. Ramos]{V. Ramos}
\address{Vanessa Ramos \\ Centro de Ci\^{e}ncias Exatas e Tecnologia-UFMA\\ Av. dos Portugueses, 1966, Bacanga\\  65080-805 S\~{a}o Lu\'{i}s\\Brazil}
\email{ramos.vanessa@ufma.br}

\author[J. Siqueira]{J. Siqueira}
\address{Jaqueline Siqueira\\ Departamento de Matem\'atica, Instituto de Matem\'atica, Universidade Federal do Rio de Janeiro, Caixa Postal 68530, Rio de Janeiro, RJ 21945-970, Brazil}
\email{jaqueline@im.ufrj.br}

\date{}
\thanks{The authors  were supported by grant 2017/08732-1, S\~ao
Paulo Research Foundation (FAPESP). VR was supported  by grant Universal-01309/17, FAPEMA-Brazil . JS was supported by grant Universal-430154/2018-6, CNPq-Brazil.}
\keywords{Equilibrium States; Non-uniform Hyperbolicity; Analyticity; Limit Theorems; Partial Hyperbolicity}
\subjclass[2010]{37A05, 37A35}
\pagenumbering{arabic}

\begin{abstract} We consider a wide family of non-uniformly expanding maps and hyperbolic H\"older continuous potentials. We prove that the unique equilibrium state associated to each element of this family is given by the eigenfunction of the transfer operator and the eigenmeasure of the dual operator (both having the spectral radius as eigenvalue). We show that the transfer operator has the spectral gap property in some space of H\"older continuous observables and from this we obtain an exponential decay of correlations and a central limit theorem for the equilibrium state. Moreover, we establish the analyticity  with respect to the potential of the equilibrium state as well as that of other thermodynamic quantities. Furthermore, we derive similar results for the equilibrium state associated to a family of non-uniformly hyperbolic skew products and hyperbolic H\"older continuous potentials.
\end{abstract}

\maketitle



\section{Introduction}
 The field of ergodic theory has been developed  with the goal of understanding the statistical behavior of a dynamical system via measures which remain invariant under its action.
In general, a dynamical system admits more than one invariant measure which makes it necessary  to choose a suitable one  to analyze the system. In order to do it, one may select  those maximizing the free energy of the system, called  {\em equilibrium states}.
 
Formally, given a continuous map $f: M \to M$ defined on a compact metric space $M$ and a  continuous potential $\phi: M \to \mathbb{R}$, we say that an \linebreak $f$-invariant probability measure $\mu$ 
is an {\em equilibrium state} for $(f,\phi)$  if it satisfies the following variational principle:
          $$h_{\mu} (f) + \int \phi \, d\mu  = \sup_{\eta\in\mathbb P_f(M)} \left\{ h_{\eta} (f) + \int \phi \, d\eta 
\right\},  $$
 where $\mathbb P_f(M)$ denotes the set of $f$-invariant probability measures on the Borel sets of $M$ endowed with the weak* topology.

The study of equilibrium states was initiated by Sinai, Ruelle and Bowen in the seventies through the application of techniques and results from statistical physics to smooth dynamical systems. Sinai, in his pioneering work \cite{Sinai}, studied the problem of existence and finiteness of equilibrium state for Anosov diffeomorphisms and  H\"older continuous potentials. This strategy was carried out by Ruelle in \cite{Ruelle68}, \cite{Ruelle78} and Bowen in  \cite{Bowen71}  to extend the theory to uniformly hyperbolic (Axiom A) dynamical systems.
Since then, the study of this problem has been greatly extended 
to include classes beyond uniform hyperbolic systems  among many others contributions we cite the recent works
 \cite{BuzziSarig}, \cite{Clime2018}, \cite{Clime2016} and \cite{sarig_etds}.

Once  the existence and  finiteness of equilibrium states have been established,  it is natural to ask which type of information one can  obtain regarding the system via the equilibrium state. For instance, how fast is the memory of the past  lost as time evolves? In other words, is it possible to specify the rate of decay of correlations?  Also, can one characterize weak correlations via a central limit theorem? Moreover, what type of regularity can one obtain for the equilibrium state when the potential varies? 

In the context of  one dimensional piecewise expanding maps, Liverani \cite{Liverani2} proved  an exponential decay of correlations  and Keller \cite{Ke} obtained a central limit theorem.  
Such statistical properties  were  also obtained for the unique equilibrium state associated to 
 potentials with small variation for a class of  non-uniformly expanding maps by Castro and Varandas \cite{CV} and for a class of partially hyperbolic horseshoes by Ramos and Siqueira \cite{RS17}.  

Bruin and Todd \cite{Bruin-Todd2008} considered a class of smooth interval maps  and proved analyticity of the topological pressure for a one-parameter family of  potentials with bounded range: the variation is smaller than the topological entropy. For  smooth deformations of generic nonuniformly hyperbolic unimodal  maps, Baladi and Smania \cite{Baladi-Smania-2012} proved the differentiability of the absolutely continuous invariant measure.
For a family  of real  multimodal maps,  Iommi and Todd   \cite{Iommi-Todd} gave a characterization  of the first order phase transitions  of the topological pressure for the geometric potentials. 
In the context of countable Markov shifts, Sarig \cite{sarig-pressao} studied the analyticity of the topological pressure for some one-parameter family of potentials. 
 For a class  of non-uniformly expanding maps,  Bonfim, Castro, and Varandas \cite{BCV} obtained  linear response formulas for its equilibrium states. 


In this paper we study a wide class of non-uniformly hyperbolic maps associated to hyperbolic potentials with small variation. The uniqueness of equilibrium states for this class was established in \cite{RV16}; here we  derive that  such equilibrium state admits strong statistical properties. Namely, the correlations decay exponentially and a central limit theorem holds. We use the approach of projective metrics to prove that the transfer operator has the spectral gap property in a suitable space of H\"older continuous observables. 
 The core of the paper is to establish this spectral gap property. From this several nice properties can be obtained applying classical analytical methods.

This technique has been successfully implemented by several authors,  such as Hofbauer and Keller  \cite{Hofbauer1982} for  studying piecewise monotonic transformations,  Baladi \cite{Baladi}  and Liverani \cite{Liverani} for studying hyperbolic maps, and  Young \cite{Young} for a class of non-uniformly hyperbolic maps.   Melbourne and Nicol \cite{Melbourne-Nicol} developed an approach in an abstract setting  to obtain statistical properties via the quasi-compactness of the transfer operator. In addition, Giulietti, Kloeckner, Lopes and Marcon \cite{Benoit} used  a differential geometric approach to obtain consequen\-ces of the spectral gap property.  
In this paper  we  follow the  approach of  \cite{RS17}, where a class of partially hyperbolic horsehoes were considered. 

In the context of this work,  from the spectral gap property, besides deriving statistical properties, it allows us to study the behavior of the system under small perturbations of the potential. Namely, we prove that the equilibrium state, as well as  others thermodynamical quantities, such as the topological pressure, vary analytically.
In \cite{ARS},  it was proved that the equilibrium state is  {\em jointly} continuous with respect to the map and the potential while here we establish {\em analyticity} but only as a function of the {\em potential}. 

The main idea for establishing the spectral gap property in the space of H\"older continuous observables is  proving that the transfer operator contracts a suitable cone. This requires a contractive behavior in the pre-images of points that are close enough. The strong topological mixing property guarantees that each point has at least one pre-image in the non-uniformly expanding  set, giving us a good control of these pre-images. The other pre-images are then controlled by a domination condition.

This paper is organized as follows. In Section~\ref{Definitions}  we describe the setting and state the main results. 
In Section~\ref{preliminaries} we introduce some definitions and results used throughout the paper. 

In Section~\ref{cones} we  prove that the transfer operator admits the  spectral gap property. In Section~\ref{Statistical} we use the spectral gap property to obtain
 statistical properties for the equilibrium state.  We also describe the thermodynamical formalism, that is, we prove that the unique equilibrium state is given by   an eigenfunction of the transfer operator and an eigenmeasure of its dual both having the spectral radius as an eigenvalue.
In Section~\ref{anal} we prove  that the equilibrium state, as well as  other thermodynamical quantities vary analytically with the potential. 

In Section \ref{skew product}	we extend our results to a wider class of  non-uniformly hyperbolic maps. Finally, in Section~\ref{exemplos} we present several examples for which our results hold.


\section{Definitions and main results}\label{Definitions}
%
%
%

Let $M$ be a compact Riemannian manifold and  let $\mathcal F$ be a family of $C^1$ local diffeomorphisms $f:M\to M$ . 
Given $\sigma\in(0, 1)$, define $\Sigma_{\sigma}(f)$ as the set of points $x\in M$ where $f$ is \emph{non-uniformly expanding}, i.e. 
\begin{eqnarray} \label{limsup}
\limsup_{n\rightarrow+\infty}\frac{1}{n}\sum_{i=0}^{n-1}\log\|Df(f^{j}(x))^{-1}\|\leq \log\sigma.
\end{eqnarray}
We say that a continuous potential $\phi: M \to \mathbb{R}$  is  {\em $\sigma$-hyperbolic  for~$f$} if  the  topological pressure of $\phi$ (with respect to $f$) is equal to the relative pressure of $\phi$ on the set~$\Sigma_{\sigma}(f)$; we recall the definition of topological pressure relative to a set in  Section~\ref{preliminaries}, Subsection \ref{tp}.

Given  $\alpha>0$, consider $  C^\alpha(M)$ the space of H\"older continuous functions $\varphi:M\to\mathbb R$ endowed with the seminorm
 $$|\varphi|_\alpha=\sup_{x\neq y}\frac{|\varphi(x)-\varphi(y)|}{d(x,y)^\alpha}
 $$
and the norm
 $$\|\varphi\|_\alpha=\|\varphi\|_0+|\varphi|_\alpha,$$
 where $\|\quad\|_0$ stands for the $\sup$ norm in $C^0(M)$. We shall always consider  $ \mathcal F\times  C^\alpha(M)$ endowed  with the product topology. 
 
Given  $(f, \phi) $, with  $\phi$ a $\sigma$-hyperbolic  potential, under the  assumption that
\begin{equation*}\label{predensa}
\{f^{-n}(x)\}_{n\ge 0} \ \text{is dense in}\ M \ \text{for all} \ x\in M,
\end{equation*}
the existence of a unique equilibrium state $\mu_{f,\phi}$ was established in [Theorem 2,\cite{RV16}]. Clearly, the condition above holds whenever $f$ is \emph{strongly topologically mixing}, i.e. 
\begin{equation*}
\text{ for every open set } U\subset M \text{  there is } N\in\mathbb N \text{ such that } f^N(U)=M.
\end{equation*}

Consider $\mathcal{Q}$ a cover by injectivity domains of $f$. Since $M$ is compact and $f$ is a local diffeomorphism, we can take $\mathcal{Q} =\{U_1, \dots, U_m\}$ for some $m\in \N$. 
Defining
\begin{equation*}\label{l}
\vartheta = \max_{1\leq j\leq m}\left\{\sup_{x\in U_j} \|Df^{-1}(x)\|\right\},
\end{equation*}
we note that $\vartheta$ does not depend on the choice of the cover $\mathcal{Q}$. 

We fix $\sigma\in(0, 1)$ satisfying
\begin{equation}\tag{$\ast$} \label{h1}
\vartheta \cdot \sigma <1. 
\end{equation}

Consider the family $\mathcal H_{\sigma}$, consisting
of pairs $(f,\phi) \in \mathcal F\times C^\alpha(M) $ such that $f$ is strongly topologically mixing, satisfies condition \eqref{h1} and  $\phi$ is $\sigma$-hyperbolic for $f$ satisfying condition \eqref{2 estrelas} that will be properly stated in Section~\ref{cones}.

We establish  that for each $(f,\phi)\in \mathcal H_{\sigma}$  its unique equilibrium state $\mu_{f,\phi}$ has an exponential decay of correlations for H\"older continuous observables.

\begin{maintheorem}\label{decaimento f}
	There exists  a constant $0<\tau < 1$ such that for all $\varphi \in L^{1}(\mu_{f,\phi}), \psi \in C^{\alpha}(M)$ there exists $K:=K(\varphi, \psi)>0$ satisfying 
	$$\left|  \int \left(\varphi \circ f^n\right)\cdot\psi \, d\mu_{f,\phi} - \int \varphi \, d\mu_{f,\phi}\int\psi \, d\mu_{f,\phi} \right|   \leq \ K \cdot \tau^{n}  \quad \mbox{for every} \ n\geq 1. $$
\end{maintheorem}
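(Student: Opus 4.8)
The plan is to derive the exponential decay of correlations from the spectral gap property of the transfer operator $\mathcal{L} = \mathcal{L}_{f,\phi}$, which I will assume has been established (in Section~\ref{cones}) on a suitable cone of H\"older continuous functions, and from the thermodynamic description of $\mu_{f,\phi}$ as $d\mu_{f,\phi} = h \, d\nu$, where $\mathcal{L}h = \lambda h$, $\mathcal{L}^{*}\nu = \lambda \nu$, and $\lambda = e^{P_{\top}(f,\phi)}$ is the spectral radius. After normalizing (replacing $\phi$ by $\phi - \log\lambda - \log h + \log(h\circ f)$, or equivalently passing to the normalized operator $\widetilde{\mathcal L}\varphi = \lambda^{-1} h^{-1}\mathcal{L}(h\varphi)$), I may assume $\lambda = 1$, $\widetilde{\mathcal L}1 = 1$, and $\widetilde{\mathcal L}^{*}\mu_{f,\phi} = \mu_{f,\phi}$. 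The key analytic input is that $\widetilde{\mathcal L}$ acts on $C^{\alpha}(M)$ with a simple leading eigenvalue $1$ (eigenfunction the constants) and the rest of the spectrum contained in a disk of radius $\tau < 1$; equivalently, there are constants $C>0$, $\tau\in(0,1)$ such that
\begin{equation*}
\left\| \widetilde{\mathcal L}^{n}\psi - \left(\int \psi \, d\mu_{f,\phi}\right) \right\|_{\alpha} \leq C\, \tau^{n}\, \|\psi\|_{\alpha} \qquad \text{for all } n\geq 1,\ \psi\in C^{\alpha}(M).
\end{equation*}
This contraction statement is exactly what the cone-contraction argument of Section~\ref{cones}, together with the standard Birkhoff-cone/Hilbert-metric machinery, yields; I would cite it rather than reprove it.

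The second step is the duality identity. For $\varphi\in C^{0}(M)$ and $\psi\in C^{\alpha}(M)$ one has, using $\widetilde{\mathcal L}^{*}\mu_{f,\phi}=\mu_{f,\phi}$ and the basic relation $\widetilde{\mathcal L}\big((\varphi\circ f)\cdot\psi\big) = \varphi\cdot\widetilde{\mathcal L}\psi$,
\begin{equation*}
\int (\varphi\circ f^{n})\cdot\psi \, d\mu_{f,\phi} \;=\; \int (\varphi\circ f^{n})\cdot\psi \, d(\widetilde{\mathcal L}^{n})^{*}\mu_{f,\phi} \;=\; \int \varphi \cdot \widetilde{\mathcal L}^{n}\psi \, d\mu_{f,\phi}.
\end{equation*}
Subtracting $\int\varphi\,d\mu_{f,\phi}\int\psi\,d\mu_{f,\phi} = \int \varphi\left(\int\psi\,d\mu_{f,\phi}\right)d\mu_{f,\phi}$ and applying the contraction estimate gives, for $\psi\in C^{\alpha}(M)$ and $\varphi\in L^{1}(\mu_{f,\phi})$,
\begin{equation*}
\left| \int (\varphi\circ f^{n})\cdot\psi \, d\mu_{f,\phi} - \int\varphi\,d\mu_{f,\phi}\int\psi\,d\mu_{f,\phi} \right| \;=\; \left| \int \varphi\left( \widetilde{\mathcal L}^{n}\psi - \int\psi\,d\mu_{f,\phi}\right) d\mu_{f,\phi} \right| \;\leq\; \|\varphi\|_{L^{1}(\mu_{f,\phi})}\, C\,\tau^{n}\,\|\psi\|_{\alpha}.
\end{equation*}
Thus the constant $K(\varphi,\psi) := C\,\|\varphi\|_{L^{1}(\mu_{f,\phi})}\,\|\psi\|_{\alpha}$ works, and the same $\tau$ serves uniformly. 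One subtlety worth a sentence: since $\varphi$ is only $L^{1}$, one should note that $\widetilde{\mathcal L}^{n}\psi - \int\psi\,d\mu_{f,\phi}$ is bounded in sup-norm (by $C\tau^{n}\|\psi\|_{\alpha}$), so the integral against $\varphi\in L^{1}(\mu_{f,\phi})$ is well defined and the H\"older-type estimate above is legitimate; this is the only place measurability/integrability of $\varphi$ enters.

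The main obstacle is not in the present deduction, which is routine once the spectral gap is in hand, but in transferring the cone-contraction statement — contraction in the projective (Hilbert) metric on a cone of positive H\"older functions — into the honest norm estimate on $C^{\alpha}(M)$ displayed above, and in verifying that the normalization by the eigenfunction $h$ does not destroy H\"older regularity. For the latter one needs $h$ and $h^{-1}$ to be H\"older (bounded away from $0$ and $\infty$), which itself should follow from $h$ lying in the invariant cone; for the former one uses the standard fact that bounded diameter in the Hilbert metric upgrades, after controlling the $L^{1}(\mu_{f,\phi})$-normalization $\int\widetilde{\mathcal L}^{n}\psi\,d\mu_{f,\phi} = \int\psi\,d\mu_{f,\phi}$, to a geometric $\|\cdot\|_{\alpha}$-contraction on the codimension-one subspace $\{\psi : \int\psi\,d\mu_{f,\phi}=0\}$. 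I would handle a general $\psi\in C^{\alpha}(M)$ by writing it as a difference of two functions in the cone (after adding a large constant), so that the contraction applies to each piece. All of this is carried out in Section~\ref{Statistical}, so here I would simply invoke it and present the three-line duality computation above.
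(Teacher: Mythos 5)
Your proposal is correct and follows essentially the same route as the paper: the paper's proof (Proposition~\ref{decaimento}) uses the same duality identity $\mathcal{L}^n\big((\varphi\circ f^n)\psi\big)=\varphi\,\mathcal{L}^n\psi$, bounds $\big\|\lambda^{-n}\mathcal{L}^n(\psi h)/h-1\big\|_0$ via the norm-convergence estimate of Proposition~\ref{cota norma} (the H\"older-norm upgrade of the cone contraction, which is exactly the spectral-gap input you invoke), and handles general $\psi$ by splitting $\psi h$ into a difference of two cone elements after adding a constant, just as you indicate. The only cosmetic difference is that you phrase everything through the doubly normalized operator $\widetilde{\mathcal L}=\lambda^{-1}h^{-1}\mathcal{L}(h\,\cdot)$ while the paper divides by $h$ explicitly at the end; since $h$ is H\"older and bounded away from zero and infinity (Proposition~\ref{h}), these are equivalent.
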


We also derive a central limit theorem for the equilibrium state $\mu_{f,\phi}$. 

\begin{maintheorem}\label{TCL}
	Let $\varphi $ be an $\alpha$-H\"older continuous function and let $\tilde{\sigma} \geq 0$ be defined by 
	$$\tilde{\sigma}^{2}= \int \psi^2  \ d\mu_{f,\phi} + 2\displaystyle\sum_{n=1}^{\infty}  \int \psi (\psi \circ f^n) \ d\mu_{f,\phi}, \quad \mbox{where} \quad \psi = \varphi - \int \varphi \ d\mu_{f,\phi} .$$
	Then $\tilde{\sigma}$ is finite and $\tilde{\sigma} = 0$ if and only if $\varphi = u \circ f - u $ for some $u \in L^{2}(\mu_{f,\phi})$. On the other hand, if $\tilde{\sigma} >0 $ then given any interval $A\subset \mathbb{R}$,
	$$\mu_{f, \phi}\left(x\in M : \frac{1}{\sqrt{n}} \sum_{j=0}^{n} \left( \varphi(f^j(x))   - \int \varphi \ d\mu_{f,\phi} \right) \in A \right) \to \frac{1}{\tilde{\sigma} \sqrt{2\pi}} \int_{A} e^{-\frac{t^2}{2\tilde{\sigma}^2}} \ dt,$$ 
	as $n$ goes to infinity.
\end{maintheorem}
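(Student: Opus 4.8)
The plan is to deduce the CLT from the spectral gap property of the transfer operator, which is the core result established in Section~\ref{cones} and used already in the proof of Theorem~\ref{decaimento f}. I will work with the normalized transfer operator $\mathcal{L}_{f,\phi}$ acting on $C^\alpha(M)$; after normalizing by the spectral radius and conjugating by the eigenfunction $h$ (so that the equilibrium state is $d\mu_{f,\phi}=h\,d\nu$ where $\nu$ is the eigenmeasure of the dual), the spectral gap says $\mathcal{L}_{f,\phi}=P+N$ where $P$ is the one-dimensional projection onto the eigenspace and $N$ has spectral radius strictly less than one, with $PN=NP=0$. First I would reduce to a zero-mean observable by replacing $\varphi$ with $\psi=\varphi-\int\varphi\,d\mu_{f,\phi}$, so that $\int\psi\,d\mu_{f,\phi}=0$.

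The key computation is to express the correlation sums $\int \psi\,(\psi\circ f^n)\,d\mu_{f,\phi}$ in terms of $\mathcal{L}_{f,\phi}^n$: using the duality $\int (u\circ f)\,v\,d\mu_{f,\phi}=\int u\,\mathcal{L}_{f,\phi}(v)\,d\mu_{f,\phi}$ (after the normalization), one has $\int \psi\,(\psi\circ f^n)\,d\mu_{f,\phi}=\int \psi\cdot\mathcal{L}_{f,\phi}^n(\psi)\,d\mu_{f,\phi}$, and since $\int\psi\,d\mu_{f,\phi}=0$ the projection term $P(\psi)$ contributes nothing, leaving $\int \psi\cdot N^n(\psi)\,d\mu_{f,\phi}$, which decays exponentially because $\|N^n\|_\alpha\leq C\tau^n$ for some $\tau<1$. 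This shows the series defining $\tilde\sigma^2$ converges absolutely, so $\tilde\sigma$ is finite. For the variance formula one checks $\tilde\sigma^2=\int\psi^2\,d\mu_{f,\phi}+2\sum_{n\geq1}\int\psi\,(\psi\circ f^n)\,d\mu_{f,\phi}=\lim_n \frac1n\int\big(\sum_{j=0}^{n-1}\psi\circ f^j\big)^2 d\mu_{f,\phi}\geq 0$.

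For the CLT itself I would invoke the standard machinery of Gordin's martingale approximation, or equivalently the Nagaev--Guivarc'h perturbed-operator method: consider the family of twisted operators $\mathcal{L}_{t}(v)=\mathcal{L}_{f,\phi}(e^{it\psi}v)$ for $t\in\mathbb{R}$ near $0$; since $t\mapsto\mathcal{L}_t$ is analytic in the operator norm on $C^\alpha(M)$ and $\mathcal{L}_0=\mathcal{L}_{f,\phi}$ has a simple leading eigenvalue $1$ isolated in the spectrum, analytic perturbation theory gives a leading eigenvalue $\lambda(t)$ and eigenprojection $P_t$ depending analytically on $t$, with $\lambda(0)=1$. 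A direct computation of the first two derivatives, using $\int\psi\,d\mu_{f,\phi}=0$, yields $\lambda'(0)=0$ and $\lambda''(0)=-\tilde\sigma^2$, so $\lambda(t)=1-\tfrac{\tilde\sigma^2}{2}t^2+o(t^2)$. Writing the characteristic function of the normalized Birkhoff sums as $\int e^{i\frac{t}{\sqrt n}S_n\psi}\,d\mu_{f,\phi}$, expressing it via $\mathcal{L}_{t/\sqrt n}^n$ applied to the constant function and integrated against $\nu$, and splitting off the spectral projection, one gets that this equals $\lambda(t/\sqrt n)^n$ times a bounded factor converging to $1$, plus an exponentially small remainder; hence it converges to $e^{-\tilde\sigma^2 t^2/2}$, and the conclusion follows from Lévy's continuity theorem. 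The degenerate case $\tilde\sigma=0$ corresponds to $\psi$ being an $L^2$-coboundary $\varphi-\int\varphi\,d\mu_{f,\phi}=u\circ f-u$, which is handled by the classical Gordin--Leonov argument: $\tilde\sigma^2=0$ forces the partial sums $\sum_{j=0}^{n-1}\mathcal{L}_{f,\phi}^j(\psi)$ to be bounded in $L^2(\mu_{f,\phi})$, and a weak-$*$ limit point produces the transfer function $u$.

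The main obstacle I anticipate is purely technical rather than conceptual: verifying that the twisted operators $\mathcal{L}_t$ genuinely act boundedly on $C^\alpha(M)$ with analytic dependence on $t$ — one must control the Hölder seminorm of products $e^{it\psi}v$ and check that the cone-contraction estimates from Section~\ref{cones} survive the perturbation — and carefully justifying the interchange of the $n\to\infty$ limit with the spectral decomposition uniformly in $t$ in a neighborhood of $0$. Once the analytic perturbation framework is set up on the correct Banach space, the eigenvalue expansion and the final limit are routine. Alternatively, if one prefers to avoid perturbation theory entirely, the exponential decay of correlations established in Theorem~\ref{decaimento f} together with the summability just proved places us exactly in the hypotheses of Liverani's CLT criterion (or the Gordin--Denker--Philipp theorem for stationary sequences with summable correlations), from which both the CLT and the coboundary characterization follow directly; I would present this route as the primary argument and remark on the spectral alternative.
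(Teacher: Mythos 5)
Your primary route---establishing exponential decay of $\mathcal{L}^n$ on mean-zero H\"older observables via the spectral gap, checking the absolute convergence of the variance series, and then invoking Gordin's martingale-approximation CLT---is essentially the paper's proof, which simply cites the non-invertible version of Gordin's theorem in \cite{VianaStoch} and refers to \cite{RS17} for the detailed verification. One small precision: Gordin's hypothesis is summability of $\|\hat{\mathcal{L}}^n\psi\|_{L^2(\mu_{f,\phi})}$ (not merely summability of the autocorrelations, which alone would not suffice for a stationary sequence), but your estimate $\|N^n\psi\|_\alpha\leq C\tau^n$ supplies exactly this, so the argument closes; the Nagaev--Guivarc'h twisted-operator route you sketch is a valid alternative but is not the one the paper takes.
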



\subsection*{Transfer operator  and its spectrum }
\label{operador transf}

Given $\sigma\in(0,1)$ let  $(f, \phi) \in \mathcal{H}_{\sigma}$.
Denote by $C^{0}(M)$ the set of real continuous functions on $M$ endowed with the sup norm. 

We define the operator $\mathcal{L}_{f,\phi} :C^{0}(M)\rightarrow C^{0}(M)$  called the {\emph{Ruelle--Perron--Frobenius operator}} or simply the {\emph{transfer operator}}, which associates to each $\psi \in C^{0}(M)$ a continuous function
$ \mathcal{L}_{f,\phi} (\psi) \colon M \to \mathbb{R}$ by
$$\label{optransf}
\mathcal{L}_{f,\phi} \psi \left(x\right) = \displaystyle\sum _{y  \in \, f^{-1}\left(x\right)} e^{\phi(y)} \psi \left( y \right). 
$$

Note that $\mathcal{L}_{f,\phi}$ is a positive and bounded linear operator. 
Consider the resolvent set $$Res({\mathcal{L}_{f,\phi}}):= \{ z\in \mathbb{C} | (zI - {\mathcal{L}_{f,\phi}} ) \ \mbox{has a bounded inverse}  \},$$
and its complement set, called the spectrum set,
$$Spec ({\mathcal{L}_{f,\phi}}):= \{ z\in \mathbb{C} : (z I - \mathcal{L}_{f,\phi}) \ \mbox{has no bounded inverse}  \}.$$

{We say that the transfer operator $\mathcal{L}_{f,\phi}$ satisfies the \emph{spectral gap property} if its spectrum set $Spec(\mathcal{L}_{f,\phi})\subset \mathbb{C} $ admits a decomposition as follows: $Spec(\mathcal{L}_{f,\phi}) =\left\{ \lambda \right\} \cup \Sigma$ where $\lambda\in \mathbb{R}$ is an eigenvalue for $\mathcal{L}_{f,\phi}$ associated to a one-dimensional eigenspace and $ \Sigma$ is strictly contained in a ball centered at zero and of radius strictly less than $\lambda.$}

It should be mentioned that besides of the {\it spectral gap property}   having its intrinsic interest,  this   property is the core of this paper since it will  be used  to obtain our main results.

\begin{maintheorem}\label{gap spectral}
For every $(f,\phi) \in \mathcal{H}_{\sigma}$ the transfer operator, $\mathcal{L}_{f,\phi}$, has the spectral gap property in the space of H\"older continuous observables.  
\end{maintheorem}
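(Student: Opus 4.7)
My plan is to establish the spectral gap property via Birkhoff's theorem on projective contractions of invariant cones, following the framework of \cite{RS17}. The idea is to identify a convex cone $\mathcal{C}_{\kappa,c} \subset C^\alpha(M)$ of strictly positive, locally log-H\"older continuous functions,
\[
\mathcal{C}_{\kappa,c} = \left\{\psi \in C^\alpha(M) : \psi > 0,\ \psi(x) \leq e^{c\, d(x,y)^\alpha}\psi(y) \text{ whenever } d(x,y) < \kappa\right\},
\]
and to show that a suitable iterate $\mathcal{L}_{f,\phi}^{N}$ maps $\mathcal{C}_{\kappa,c}$ strictly inside a subcone $\mathcal{C}_{\kappa,c'}$ with $c'<c$ of finite projective diameter. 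The parameters $\kappa$, $c$ and $N$ are to be chosen depending on $\sigma$, $\vartheta$, $\alpha$, $|\phi|_\alpha$, and the variation hypothesis \eqref{2 estrelas}.

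The first main step is cone invariance. For $\psi \in \mathcal{C}_{\kappa,c}$ and nearby points $x_1,x_2$, I would split the preimages $f^{-N}(x_i)$ into \emph{good} branches, namely those whose backward orbit up to time $N$ enjoys the uniform contraction $\|Df^{-1}\|\leq\sigma$ afforded by hyperbolic times relative to $\Sigma_\sigma(f)$, and \emph{bad} branches, controlled only by $\vartheta$. The good branches contract distances exponentially, producing a contribution to the ratio $\mathcal{L}_{f,\phi}^{N}\psi(x_1)/\mathcal{L}_{f,\phi}^{N}\psi(x_2)$ of the form $e^{(c\sigma^{N\alpha} + C|\phi|_\alpha)d(x_1,x_2)^\alpha}$, while bad branches contribute $e^{(c\vartheta^{N\alpha} + C|\phi|_\alpha)d(x_1,x_2)^\alpha}$. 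Since $\vartheta\sigma<1$ by \eqref{h1}, both exponents can be pushed below $c\,d(x_1,x_2)^\alpha$, provided the $\phi$-weight of the bad branches is dominated by that of the good ones; this last domination is exactly what \eqref{2 estrelas}, a smallness hypothesis on the oscillation of $\phi$, is expected to supply.

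The second step is to prove finite projective diameter of the image. Strong topological mixing yields an integer $N$ such that $f^{N}$ maps every ball of a fixed radius onto $M$, and an Alves--Bonatti--Viana style argument produces a hyperbolic-time preimage of each $x\in M$ inside every such ball. One then obtains a uniform lower bound $\mathcal{L}_{f,\phi}^{N}\psi(x)\geq\rho\|\psi\|_0$ for all $\psi\in\mathcal{C}_{\kappa,c}$, while the iterated cone inequality yields a uniform local log-H\"older constant strictly less than $c$. Together these give $\mathcal{L}_{f,\phi}^{N}(\mathcal{C}_{\kappa,c}) \subset \mathcal{C}_{\kappa,c'}$ with $c'<c$, and a direct calculation shows that $\mathcal{C}_{\kappa,c'}$ has finite diameter inside $\mathcal{C}_{\kappa,c}$ in the Hilbert projective metric. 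Birkhoff's theorem then produces a strict contraction ratio $\tau_0<1$ for $\mathcal{L}_{f,\phi}^{N}$ in that metric, from which one extracts a positive eigenfunction $h\in\mathcal{C}_{\kappa,c}$ with eigenvalue $\lambda$ equal to the spectral radius, and exponential convergence $\lambda^{-n}\mathcal{L}_{f,\phi}^{n}\psi\to h\int\psi\,d\nu$ in $\|\cdot\|_\alpha$ for $\psi$ in a spanning subset of $C^\alpha(M)$. A now standard argument (cf.\ \cite{Liverani}) upgrades this to the full spectral decomposition $\mathrm{Spec}(\mathcal{L}_{f,\phi}) = \{\lambda\}\cup\Sigma$ with $\Sigma$ contained in a disk of radius strictly less than $\lambda$.

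The principal obstacle is the cone-invariance step. The challenge is to strike a quantitative balance between the good preimages -- plentiful but only accessible via hyperbolic times rather than at every backward iterate -- and the bad preimages, whose weighted contribution must be dominated uniformly through the hypothesis \eqref{2 estrelas} on the variation of $\phi$. Pinning down the precise threshold on $|\phi|_\alpha$ and $\sup\phi-\inf\phi$ that makes this balance work simultaneously for all $\psi\in\mathcal{C}_{\kappa,c}$ and all nearby pairs $x_1,x_2$ is the delicate technical core of the proof.
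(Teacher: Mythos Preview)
Your overall architecture---invariant cone, finite projective diameter, Birkhoff contraction, extraction of the eigenfunction, and the final passage to the spectral decomposition on $C^\alpha(M)$---matches the paper's. The substantive divergence is in the choice of cone, and this is where your sketch has a genuine gap.

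You work with the log-H\"older cone $\mathcal{C}_{\kappa,c}=\{\psi>0:\psi(x)\le e^{c\,d(x,y)^\alpha}\psi(y)\}$. For this cone, the invariance estimate passes through the elementary inequality
\[
\frac{\mathcal{L}^N\psi(x_1)}{\mathcal{L}^N\psi(x_2)}=\frac{\sum_j e^{S_N\phi(x_j)}\psi(x_j)}{\sum_j e^{S_N\phi(y_j)}\psi(y_j)}\le \max_j\,\frac{e^{S_N\phi(x_j)}\psi(x_j)}{e^{S_N\phi(y_j)}\psi(y_j)},
\]
so the bound is governed by the \emph{worst} branch. In the present setting condition \eqref{h1} only says $\vartheta\sigma<1$; it allows $\vartheta>1$, and for a bad branch one has $d(x_j,y_j)\le\vartheta^N d(x_1,x_2)$, hence the exponent $c\,\vartheta^{N\alpha}$ you wrote down is \emph{larger} than $c$, not smaller. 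Your appeal to ``$\phi$-weight domination of bad branches by good ones'' cannot rescue this: a convex combination of the branch ratios is still bounded below by $e^{c'd^\alpha}$ only via its maximum, and there is no mechanism in the multiplicative cone to average the expanding branches against the single contracting one. Note also that the paper does not produce a family of good branches coming from hyperbolic times of the \emph{point} $x$; it produces exactly \emph{one} good preimage, by using strong mixing to land every $x$ in the image of a fixed hyperbolic pre-ball $V_{\tilde n}$ (Lemma~\ref{lemmahyper} and inequality~\eqref{hyper}).

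The paper sidesteps this obstruction by working instead with the additive cone
\[
\mathcal{C}_{k,\delta}=\Big\{\varphi>0:\ \frac{|\varphi|_{\alpha,\delta}}{\inf\varphi}\le k\Big\}.
\]
For this cone the invariance computation (Proposition~\ref{inv cone}) produces a \emph{sum} $\sum_j d(x_j,y_j)^\alpha$ in the numerator, divided by $[\deg f]^N$ coming from the lower bound on $\inf\mathcal{L}^N\varphi$. One then bounds all but one branch by $\vartheta^{N\alpha}d(x,y)^\alpha$ and the single good branch by $\gamma^\alpha d(x,y)^\alpha$, and condition~\eqref{2 estrelas} is exactly the statement that the resulting averaged quantity is strictly less than $1$. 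In short: the additive cone turns the branch comparison into an average that the degree can absorb, whereas the multiplicative cone forces a maximum that the worst branch controls. If you wish to keep your outline, you should switch to the cone $\mathcal{C}_{k,\delta}$ of \eqref{cone holder}; the rest of your plan then goes through essentially as in the paper.
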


%

According to Riesz--Markov Theorem we can consider the dual operator $ \mathcal{L}_{f,\phi}^{\ast}:\mathbb P(M) \to \mathbb P(M)$ as the operator that   satisfies
$$\int \psi \ d\mathcal{L}_{f,\phi}^{\ast}\eta  = \int  \mathcal{L}_{f,\phi}(  \psi ) \ d\eta , $$
for every $\psi \in  C^{0}(M) $ and every $\eta \in \mathbb P(M)  $, where $ \mathbb P(M)$ denotes the space of probability measures on the Borel sets of $M$. 

Now consider the spectral radius $\lambda_{f,\phi}$ of the transfer operator defined by $\lambda_{f,\phi}:= \sup \{ |z| : z \in Spec(\mathcal{L}_{f,\phi})  \}$. Since $\mathcal{L}_{f,\phi}$ is positive, the spectral radius can be computed by the following formula
    $$\lambda_{f,\phi}=\displaystyle\lim_{n \to \infty} \displaystyle\sqrt[n]{\parallel {\mathcal{L}}_{f,\phi}^{n} \parallel }. $$
Using that $\|\mathcal{L}_{f,\phi}^{n} \|=\|\mathcal{L}_{f,\phi}^{n} 1 \|$ for any $n\geq 1$  we deduce that 
\begin{equation}\label{eq.radius}
\deg(f)e^{\inf \phi}\leq  
\lambda_{f,\phi}
\leq\textbf{\rm{deg}}(f)e^{\sup\phi}.
\end{equation}

As a byproduct of the spectral gap property  of the transfer operator we can describe the Thermodynamical Formalism for a pair $(f,\phi) \in \mathcal{H}_{\sigma}.$

\begin{maintheorem} \label{formalismo} Let $\lambda_{f,\phi}$ be the spectral radius of the transfer operator $\mathcal{L}_{f,\phi}$. There exist a probability measure $\nu_{f,\phi}\in\mathbb{P}(M)$ and a H\"older continuous function $h_{f, \phi}:M\to\mathbb{R}$ bounded away from zero and infinity which satisfies  
$$\mathcal{L}_{f,\phi}^{\ast}\nu_{f,\phi}=\lambda_{f,\phi}\nu_{f,\phi} \quad and \quad \mathcal{L}_{f,\phi}h_{f,\phi}=\lambda_{f,\phi} h_{f,\phi}.$$
Moreover, the invariant measure $\mu_{f, \phi}$ given by $\mu_{f, \phi}=h_{f, \phi}\nu_{f, \phi}$ is the unique equilibrium state associated to $(f, \phi)\in \mathcal{H}_{\sigma}$.
\end{maintheorem}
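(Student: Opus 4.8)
The plan is to derive the existence of the eigenmeasure, the eigenfunction, and the identification of the equilibrium state as three successive consequences of the spectral gap property established in Theorem \ref{gap spectral}, combined with the uniqueness result from \cite{RV16} that is already available in this setting. First I would produce the eigenmeasure $\nu_{f,\phi}$: the dual operator $\mathcal{L}_{f,\phi}^{\ast}$ acts continuously on the space of probability measures $\mathbb{P}(M)$, which is convex and weak* compact, so after normalising one applies the Schauder--Tychonoff fixed point theorem to the map $\eta \mapsto \mathcal{L}_{f,\phi}^{\ast}\eta / \big(\mathcal{L}_{f,\phi}^{\ast}\eta\big)(M)$ to obtain a probability measure $\nu_{f,\phi}$ with $\mathcal{L}_{f,\phi}^{\ast}\nu_{f,\phi} = \lambda \nu_{f,\phi}$ for some $\lambda > 0$; then one checks that $\lambda$ must equal the spectral radius $\lambda_{f,\phi}$ by pairing the eigenrelation against the constant function $1$ and using $\|\mathcal{L}_{f,\phi}^{n}\| = \|\mathcal{L}_{f,\phi}^{n} 1\|$ together with \eqref{eq.radius}, so that $\lambda^{n} = \int \mathcal{L}_{f,\phi}^{n}1 \, d\nu_{f,\phi}$ forces $\lambda = \lim_n \sqrt[n]{\|\mathcal{L}_{f,\phi}^n\|} = \lambda_{f,\phi}$.

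Next I would obtain the eigenfunction $h_{f,\phi}$. By Theorem \ref{gap spectral}, $\lambda_{f,\phi}$ is an eigenvalue of $\mathcal{L}_{f,\phi}$ on $C^{\alpha}(M)$ with a one-dimensional eigenspace, spanned by some $h_{f,\phi}\in C^{\alpha}(M)$; alternatively one recovers $h_{f,\phi}$ as the uniform limit of the Cesàro (or spectral-projection) averages $\lambda_{f,\phi}^{-n}\mathcal{L}_{f,\phi}^{n}1$, which converge precisely because the rest of the spectrum sits in a ball of strictly smaller radius. One then has to show $h_{f,\phi}$ can be taken strictly positive and bounded away from $0$ and $\infty$: positivity of $\mathcal{L}_{f,\phi}$ gives $h_{f,\phi}\ge 0$ (replacing $h_{f,\phi}$ by $|h_{f,\phi}|$ if necessary, using one-dimensionality of the eigenspace), while the strong topological mixing assumption guarantees that every point has a preimage in the expanding region, hence $\mathcal{L}_{f,\phi}^{N} h_{f,\phi}(x) > 0$ uniformly; the cone-contraction estimates proved in Section~\ref{cones} yield the two-sided bound, since $h_{f,\phi}$ lies in the invariant cone. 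Normalise so that $\int h_{f,\phi}\, d\nu_{f,\phi} = 1$, and set $\mu_{f,\phi} = h_{f,\phi}\,\nu_{f,\phi}$.

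It remains to check $\mu_{f,\phi}$ is an $f$-invariant probability and that it is the equilibrium state. Invariance is the standard computation: for $\psi \in C^{0}(M)$,
\[
\int \psi\circ f \, d\mu_{f,\phi} = \int (\psi\circ f)\, h_{f,\phi}\, d\nu_{f,\phi} = \lambda_{f,\phi}^{-1}\int \mathcal{L}_{f,\phi}\big((\psi\circ f) h_{f,\phi}\big)\, d\nu_{f,\phi} = \lambda_{f,\phi}^{-1}\int \psi\, \mathcal{L}_{f,\phi}h_{f,\phi}\, d\nu_{f,\phi} = \int \psi\, d\mu_{f,\phi},
\]
using $\mathcal{L}_{f,\phi}((\psi\circ f)\,g) = \psi\,\mathcal{L}_{f,\phi}g$ and the two eigenrelations. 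Finally, to identify $\mu_{f,\phi}$ with the equilibrium state I would invoke [Theorem 2, \cite{RV16}], which asserts that $(f,\phi)\in\mathcal{H}_\sigma$ admits a \emph{unique} equilibrium state $\mu_{f,\phi}$; it therefore suffices to show the measure $h_{f,\phi}\nu_{f,\phi}$ coincides with the one constructed there, or, more directly, to verify that it attains the variational principle. The cleanest route is to quote that the reference construction already produces the equilibrium state as a Gibbs-type measure of the form (eigenfunction)$\times$(conformal measure) and appeal to uniqueness of such eigendata from the spectral gap; alternatively one checks the Gibbs property of $\mu_{f,\phi}$ along hyperbolic orbits directly from the bounds on $h_{f,\phi}$ and on Jacobians, then uses that a Gibbs state with respect to a hyperbolic potential is the equilibrium state. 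I expect the main obstacle to be precisely this last identification step — transferring the abstract eigendata into the variational characterisation — since establishing the eigenmeasure and eigenfunction is, given Theorem \ref{gap spectral}, essentially soft functional analysis, whereas matching $h_{f,\phi}\nu_{f,\phi}$ with the measure of \cite{RV16} requires care about the hyperbolicity of $\phi$ and the role of the non-uniformly expanding set $\Sigma_\sigma(f)$.
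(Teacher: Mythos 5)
Your outline follows the paper's route in broad strokes, but there are two points where it either diverges or has a genuine soft spot. For the eigenmeasure, the paper does not construct $\nu_{f,\phi}$ at all: it quotes \cite{RV16}, which supplies not only $\mathcal{L}_{f,\phi}^{\ast}\nu_{f,\phi}=\lambda_{f,\phi}\nu_{f,\phi}$ but also $\nu_{f,\phi}(\Sigma_{\sigma})=1$ and $\log\lambda_{f,\phi}=P_f(\phi)$, both of which are needed later in the identification step. Your Schauder--Tychonoff construction is a legitimate alternative, but your one-line identification of the fixed-point eigenvalue with the spectral radius only yields $\lambda\le\lambda_{f,\phi}$ from $\lambda^{n}=\int\mathcal{L}_{f,\phi}^{n}\mathbf{1}\,d\nu\le\|\mathcal{L}_{f,\phi}^{n}\|$; the reverse inequality requires the comparability $\sup\mathcal{L}_{f,\phi}^{kN}\mathbf{1}\le C\inf\mathcal{L}_{f,\phi}^{kN}\mathbf{1}$, i.e.\ the cone invariance of Proposition~\ref{inv cone} together with Lemma~\ref{cotasup}, which you do not invoke at that point.

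The more serious issue is a circularity in your construction of $h_{f,\phi}$. In the paper the spectral gap (Theorem~\ref{gap}) is \emph{deduced from} the existence of the eigenfunction: the proof of Theorem~\ref{gap} uses $h_{f,\phi}$ to build the decomposition $C^{\alpha}(M)=E_0\oplus E_1$. The eigenfunction itself is produced in Proposition~\ref{h} purely from the Birkhoff cone contraction (Proposition~\ref{contracao no cone}), by showing $\lambda_{f,\phi}^{-nN}\mathcal{L}_{f,\phi}^{nN}\mathbf{1}$ is Cauchy in the sup norm. So you cannot justify the convergence of $\lambda_{f,\phi}^{-n}\mathcal{L}_{f,\phi}^{n}\mathbf{1}$ by appealing to ``the rest of the spectrum sitting in a smaller ball'', nor extract $h_{f,\phi}$ as a spanning vector of the one-dimensional eigenspace furnished by Theorem~\ref{gap spectral}, without first re-proving that theorem by other means. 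The cone contraction is the correct tool here, and it simultaneously delivers positivity, the two-sided bounds and the H\"older regularity. Finally, your closing step (``a Gibbs state for a hyperbolic potential is the equilibrium state'') is left as a black box; the paper makes it precise by transferring the Gibbs property at hyperbolic times from $\nu_{f,\phi}$ to $\mu_{f,\phi}=h_{f,\phi}\nu_{f,\phi}$ (using that $h_{f,\phi}$ is bounded away from $0$ and $\infty$), then combining the Birkhoff ergodic theorem with the Brin--Katok entropy formula to obtain $h_{\mu_{f,\phi}}(f)=\log\lambda_{f,\phi}-\int\phi\,d\mu_{f,\phi}$, whence $\mu_{f,\phi}$ attains the variational principle because $P_f(\phi)=\log\lambda_{f,\phi}$; uniqueness is then \cite{RV16}, as you say.
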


We also prove that some thermodynamical quantities vary analytically with the potential, in particular,  the equilibrium state as well.  We describe the setting as follows: consider $f\in \mathcal{F}$ strongly topologically mixing satisfying condition \eqref{h1}. Denote by $\mathcal{P}_\sigma$ the set of $\alpha$-H\"older continuous potentials which are $\sigma$-hyperbolic for $f$ and satisfy condition~\eqref{2 estrelas}.

\begin{maintheorem}\label{analiticidade potencial}
  The following functions defined in $\mathcal{P}_\sigma$ are analytic:
\begin{enumerate} [label={\textup{(\roman*)}},itemsep=0pt]
\item The topological pressure function $\phi\longmapsto P_{f}( \phi)\in\mathbb{R}$;\label{i}
\item The invariant density function $\phi\longmapsto h_{f,\phi}\in C^{\alpha}(M)$;\label{ii}
\item The conformal measure function $\phi\longmapsto \nu_{f,\phi}\in (C^{\alpha}(M))^*$.\label{iii} 
\item The equilibrium state function $\phi\longmapsto \mu_{f,\phi}=h_{f, \phi}\nu_{f, \phi}\in (C^{\alpha}(M))^*$. \label{iv}
\end{enumerate}
\end{maintheorem}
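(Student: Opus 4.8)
The plan is to deduce all four analyticity statements from the spectral gap property (Theorem \ref{gap spectral}) by the classical perturbation-theoretic route of Kato, combined with the implicit function theorem in Banach spaces. The starting point is that, for a fixed strongly topologically mixing $f$ satisfying \eqref{h1}, the assignment $\phi \mapsto \mathcal{L}_{f,\phi}$ is an analytic map from the Banach space $C^\alpha(M)$ into the Banach space $L(C^\alpha(M))$ of bounded linear operators on the space of H\"older observables. Indeed, $\mathcal{L}_{f,\phi}\psi(x) = \sum_{y\in f^{-1}(x)} e^{\phi(y)}\psi(y)$ depends on $\phi$ only through the (uniformly bounded, on the finitely many injectivity domains) multiplier $e^{\phi\circ (f|_{U_j})^{-1}}$, and $\phi\mapsto e^{\phi}$ is analytic on $C^\alpha(M)$ because the exponential series converges in the Banach algebra $C^\alpha(M)$; composing with the bounded linear ``sum over preimages'' operator preserves analyticity. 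One must check that $\mathcal{P}_\sigma$ is an open subset of $C^\alpha(M)$ — the conditions defining it ($\sigma$-hyperbolicity and \eqref{2 estrelas}) should be stable under small H\"older perturbations, which I would verify using the characterization of $\sigma$-hyperbolic potentials via the relative pressure and the fact that \eqref{2 estrelas} is an open smallness-of-variation condition.

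Next, fix $\phi_0\in\mathcal{P}_\sigma$. By Theorem \ref{gap spectral}, $\mathcal{L}_{f,\phi_0}$ has a simple isolated eigenvalue $\lambda_{f,\phi_0}$ (its spectral radius) separated from the rest of the spectrum by an annular gap. By the Kato--Rellich analytic perturbation theory for isolated simple eigenvalues, there is a neighborhood of $\phi_0$ in $\mathcal{P}_\sigma$ on which the leading eigenvalue $\phi\mapsto \lambda_{f,\phi}$ and the associated spectral projection $\phi\mapsto P_{f,\phi}\in L(C^\alpha(M))$ vary analytically; one realizes them via the Riesz projection $P_{f,\phi} = \frac{1}{2\pi i}\oint_\Gamma (zI-\mathcal{L}_{f,\phi})^{-1}\,dz$ along a fixed contour $\Gamma$ enclosing $\lambda_{f,\phi_0}$ only, using that $\phi\mapsto(zI-\mathcal{L}_{f,\phi})^{-1}$ is analytic, uniformly for $z\in\Gamma$, as long as the gap persists (it does, by upper semicontinuity of the spectrum). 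Then:
\begin{enumerate}[label={\textup{(\roman*)}},itemsep=0pt]
\item Since $P_{f}(\phi)=\log\lambda_{f,\phi}$ (this identification follows from Theorem \ref{formalismo}, as $\lambda_{f,\phi}$ is the eigenvalue and the pressure is its logarithm), analyticity of $\lambda_{f,\phi}$ together with $\lambda_{f,\phi}>0$ gives analyticity of $\phi\mapsto P_f(\phi)$.
\item The density $h_{f,\phi}$ spans the range of $P_{f,\phi}$; normalizing by $h_{f,\phi}=P_{f,\phi}(h_{f,\phi_0})/\nu_{f,\phi_0}(P_{f,\phi}(h_{f,\phi_0}))$ (a ratio of an analytic $C^\alpha$-valued numerator and a non-vanishing analytic scalar denominator, using that $\nu_{f,\phi_0}(h_{f,\phi_0})>0$) yields an analytic map $\phi\mapsto h_{f,\phi}\in C^\alpha(M)$.
\item Dually, $\nu_{f,\phi}$ spans the range of the adjoint projection $P_{f,\phi}^*$ on $(C^\alpha(M))^*$, and is the eigenmeasure of $\mathcal{L}_{f,\phi}^*$ for $\lambda_{f,\phi}$; normalizing so that $\nu_{f,\phi}(1)=1$ (again division by a non-vanishing analytic scalar) gives analyticity of $\phi\mapsto\nu_{f,\phi}\in(C^\alpha(M))^*$.
\item Finally $\mu_{f,\phi}=h_{f,\phi}\nu_{f,\phi}$ is the product (a bounded bilinear, hence analytic, operation $(C^\alpha(M))\times(C^\alpha(M))^*\to(C^\alpha(M))^*$, via $\psi\mapsto\nu_{f,\phi}(h_{f,\phi}\psi)$) of two analytic maps, hence analytic.
\end{enumerate}

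The main obstacle I expect is not the abstract perturbation machinery — that is standard once the spectral gap is in hand — but two more delicate points. First, establishing that $\mathcal{P}_\sigma$ is genuinely \emph{open} in $C^\alpha(M)$: the $\sigma$-hyperbolicity condition involves an equality of the topological pressure and the relative pressure on $\Sigma_\sigma(f)$, and one must argue that this equality, together with the quantitative condition \eqref{2 estrelas}, is preserved under sufficiently small perturbations; this needs the robustness estimates for the relative pressure developed in \cite{RV16} together with the smallness margin in \eqref{2 estrelas}. Second, ensuring that the \emph{size} of the spectral gap — the radius of the contour $\Gamma$ and the uniform bound on $\|(zI-\mathcal{L}_{f,\phi})^{-1}\|$ along it — can be chosen uniformly on a whole neighborhood of $\phi_0$, so that the Riesz projection formula is legitimately analytic there; this follows from continuity of the construction in Section~\ref{cones}, but must be made explicit. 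Once these two robustness facts are secured, the rest is a routine application of analytic functional calculus and the observation that every normalization divisor appearing above is a non-vanishing analytic scalar.
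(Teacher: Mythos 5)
Your proposal is correct and follows essentially the same route as the paper: analyticity of $\phi\mapsto\mathcal{L}_{f,\phi}$ (which the paper takes from \cite{BCV}) plus the spectral gap, the Riesz contour projection $E(\phi)=\frac{1}{2\pi i}\int_\gamma(zI-\mathcal{L}_\phi)^{-1}dz$ shown analytic via the Neumann-series expansion of the resolvent and a compactness argument along $\gamma$, and then all four quantities expressed through $E(\phi)$ (the paper writes $\lambda_\phi=\eta(\mathcal{L}_\phi E_\phi\varphi)/\eta(E_\phi\varphi)$ for a Hahn--Banach functional $\eta$, $h_\phi=E_\phi(\mathbf{1})$ and $\nu_\phi=E_\phi(\cdot)/E_\phi(\mathbf{1})$, while the openness of $\mathcal{P}_\sigma$ is exactly as you suspect: \cite{ARS} for hyperbolicity plus the openness of \eqref{2 estrelas}). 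The only wrinkle is your normalization of $h_{f,\phi}$ by the frozen functional $\nu_{f,\phi_0}$: for item (iv) to produce the equilibrium \emph{probability} measure you should normalize by $\nu_{f,\phi}(P_{f,\phi}(h_{f,\phi_0}))$ instead, which is equally analytic and non-vanishing once (iii) is in hand.
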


The meaning of analyticity in this context is the one given in Section~\ref{anal}. 
We point out that according \cite{ARS} the family of hyperbolic potentials is an open class in the $C^0$-topology. Since \eqref{2 estrelas} is an open condition on the potential we derive that $\mathcal{P}_\sigma$ is an open subset of $C^{\alpha}(M).$

In Section~\ref{exemplos} we present some examples for which our results hold.

%
%


\section{Preliminaries} \label{preliminaries}

In this section we present some basic definitions and results that will be useful in the following sections. We begin with the definition  of pressure relative  to a set, not necessarily compact.  This will allow us to precise the concept of hyperbolic potentials stated in the last section.

\subsection{Topological pressure} \label{tp}Let $M$ be a compact metric space and consider $T:M\to M$  and  $\phi:M\to\mathbb{R}$ both continuous.
Given $\delta>0$, $n\in\mathbb{N}$ and $x\in M$, define the \emph{dynamical ball} by  
$$B_{\delta}(x,n)=\left\{y\in M:\;d(T^{j}(x), T^{j}(y))<\delta,\;\mbox{for}\;\; 0\leq j\leq n\right\}.$$
For each  $N\in\mathbb N$ consider $\mathcal{F}_{N}$ the following collection of dynamical  balls 
 $$\mathcal{F}_{N}=\{B_{\delta}(x,n):\; x\in M\;\mbox{and}\; n\geq N\}.$$
Given $\Lambda\subset M$, not necessarily compact, denote by $\mathcal{F}_{N}(\Lambda)$ the finite or countable families of elements in $\mathcal{F}_{N}$ which cover $\Lambda.$
 For $n\in\mathbb N$, let 
 $$R_{n,\delta}\phi(x)=\sup_{y\in B_\delta(x,n)}S_n\phi(y),$$
where $S_n\phi(y):= \sum_{j=0}^{n-1}\phi(f^j(y))$ is the Birkhoff's sum.  

Assume $\Lambda\subset M$ is invariant under $T$  and define, for each~$\gamma>0$,
$$
m_{T}(\phi, \Lambda, \delta, N,\gamma)=
\inf_{\mathcal{U}\subset\mathcal{F}_{N}(\Lambda)} \left\{\sum_{B_{\delta}(x,n)\in\mathcal{U}}e^{-\gamma n+ R_{n,\delta}\phi(x)}\right\}.
$$
Define
$$m_{T}(\phi, \Lambda, \delta, \gamma)=\lim_{N\rightarrow +\infty}{m_{T}(\phi, \Lambda, \delta, N,\gamma)},$$
and 
$$P_{T}(\phi, \Lambda, \delta)=\inf{\{\gamma >0 \, | \; m_{T}(\phi, \Lambda, \delta,\gamma)=0\}}.$$
The \textit{relative pressure} of $\phi$ 
on $\Lambda$ is  defined by
 $$P_{T}(\phi, \Lambda)=\lim_{\delta\rightarrow 0} {P_{T}(\phi, \Lambda, \delta)}.$$
The \emph{topological pressure of $\phi$} is by definition $P_T(\phi,M)$, and it satisfies
\begin{equation}\label{pressure}
P_{T}(\phi)=\sup\left\{ P_{T}(\phi, \Lambda),\, P_{f}(\phi, \Lambda^{c})\right\},
\end{equation}
where $\Lambda^c$ is the complement of the set $\Lambda$ on $M$. We refer the reader to [Section 11,\cite{Pes}] for the proof of~\eqref{pressure} and for additional  properties of the pressure.

We precise the definition of hyperbolic potentials, first we consider potentials with respect to diffeomorphisms and after with respect to skew products. 

Let $M$ be a compact Riemannian manifold and let $f:M\to M$ be a local $C^1$ diffeomorphism.  Considering the set $\Sigma_{\sigma}(f)$ as in Section~\ref{Definitions}, we point out that it is an invariant set although not necessarily compact.   A real continuous function $\phi:M\to\mathbb{R}$ is said to be a {\em hyperbolic potential} for $f$ if 
         $$P_f(\phi, \left(\Sigma_{\sigma}(f)\right)^{c})<P_f(\phi, \Sigma_{\sigma}(f))=P_f(\phi).$$
The class of hyperbolic potentials for a map consists in an open class in the $C^{0}$-topology [\cite{ARS}].



\subsection{Hyperbolic pre-balls} \label{pre balls}
Here we present the concept of hyperbolic times that  will help us to deal with the lack of hyperbolicity in the family of maps that we consider. 
We say that $n$ is a \textit{hyperbolic time} for $x$ if 
$$\prod_{j=n-k}^{n-1}\|Df(f^{j}(x))^{-1}\|\leq \sigma^{ k/2}\quad\text{for all }1\leq k< n.$$
Since we consider only maps with no critical or singular sets, the definition of hyperbolic times given in \cite[Definition~5.1]{ABV} reduces  to the one that we have presented.

Condition \eqref{limsup} of non-uniform expansion  is enough to guarantee the existence of infinitely many hyperbolic times for points in $\Sigma_{\sigma}(f)$. See [Lemma 5.4, \cite{ABV}].

The next result guarantees that points associated to a hyperbolic time admit a neighborhood for which all  orbits behave as uniformly expanded ones.  We refer the reader to [Lemma 5.2, \cite{ABV}] for its proof.

\begin{Prop}\label{delta}
There exists $\delta>0$ such that  if  $n$ is a hyperbolic  time for $x\in M$, then there exists a neighborhood of $x$, $V_n(x)$, satisfying 
\begin{enumerate}[label={\textup{(\roman*)}},itemsep=0pt]
\item $f^n$ maps $V_n(x)$ diffeomorphically onto the ball centered on $f^n(x)$  and of radius $\delta$;
\item  for all $1\leq k< n$ and $y,x \in V_n(x)$,
  $$ d(f^{n-k}(y), f^{n-k}(z)) \leq \sigma^{k/2}d(f^{n}(y), f^{n}(z)). $$
\end{enumerate}
Moreover, for every $y\in V_n(x)$ we have $||Df^n (y)^{-1} || \leq \sigma^{n/2}$.
\end{Prop}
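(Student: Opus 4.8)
\emph{Plan and uniform ingredients.} The plan is to run the classical hyperbolic pre-ball construction of \cite[Lemma~5.2]{ABV}: the set $V_n(x)$ will be the connected component containing $x$ of the $f^{n}$-preimage of a fixed small ball centred at $f^{n}(x)$, and its geometry will be governed by a uniform distortion estimate together with the multiplicative bound built into the definition of a hyperbolic time. First I would extract the uniform local data. Since $f$ is a $C^{1}$ local diffeomorphism and $M$ is compact, there is $r_{0}>0$ such that $f$ restricted to any ball of radius $r_{0}$ is a diffeomorphism onto its image, and $\sup_{w\in M}\|Df(w)^{-1}\|<\infty$. As $Df$ is uniformly continuous, for every $\eta>1$ there is $r_{1}=r_{1}(\eta)\in(0,r_{0})$ such that, for each $z\in M$, the inverse branch $g_{z}$ of $f$ with $g_{z}(f(z))=z$ is well defined on $B(f(z),r_{1})$ and satisfies $d(g_{z}(u),g_{z}(v))\le\eta\,\|Df(z)^{-1}\|\,d(u,v)$ there, while also $\|Df(w)^{-1}\|\le\eta\,\|Df(z)^{-1}\|$ whenever $d(z,w)<r_{1}$ (one passes from derivatives to distances through the exponential map, whose distortion is uniformly bounded by compactness). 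Finally I fix $\delta\in(0,r_{1})$ small enough that every ball of radius $2\delta$ lies inside one injectivity domain of $f$; this is the constant in the statement.

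\emph{The nested pull-backs and item (i).} Let $n$ be a hyperbolic time for $x$. For $0\le j\le n$ let $V^{j}$ be the connected component of $f^{-(n-j)}\!\bigl(B(f^{n}(x),\delta)\bigr)$ containing $f^{j}(x)$, and set $V_{n}(x):=V^{0}$. By downward induction on $j$ I would prove that $f^{\,n-j}$ maps $V^{j}$ diffeomorphically onto $B(f^{n}(x),\delta)$ and that
$$V^{j}\ \subseteq\ B\!\left(f^{j}(x),\ \delta\,\eta^{\,n-j}\prod_{i=j}^{n-1}\|Df(f^{i}(x))^{-1}\|\right).$$
For $j=n$ this is trivial. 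For the step, the radius bound at level $j+1$ combined with the hyperbolic-time inequality puts $V^{j+1}$ inside a ball about $f^{j+1}(x)$ of radius $<r_{1}$ (this is where $\sigma<1$, $\eta$ close to $1$ and $\delta$ small are used), so $g_{f^{j}(x)}$ is defined on $V^{j+1}$, maps it diffeomorphically onto $V^{j}$ and contracts distances by the factor $\eta\,\|Df(f^{j}(x))^{-1}\|$; composing with the diffeomorphism $f^{\,n-j-1}\colon V^{j+1}\to B(f^{n}(x),\delta)$ supplied by the induction hypothesis gives the level-$j$ diffeomorphism, and applying the contraction factor to the radius bound at level $j+1$ produces the radius bound at level $j$. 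In particular $V_{n}(x)=V^{0}$ is an open neighbourhood of $x$ and $f^{n}$ carries it diffeomorphically onto $B(f^{n}(x),\delta)$, which is item~(i).

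\emph{Backward contraction and the derivative bound.} For $y,z\in V_{n}(x)$ the points $f^{j}(y),f^{j}(z)$ lie in $V^{j}\subseteq B(f^{j}(x),r_{1})$, so $f^{j}(y)=g_{f^{j}(x)}(f^{j+1}(y))$, $f^{j}(z)=g_{f^{j}(x)}(f^{j+1}(z))$, whence $d(f^{j}(y),f^{j}(z))\le\eta\,\|Df(f^{j}(x))^{-1}\|\,d(f^{j+1}(y),f^{j+1}(z))$. Chaining this from level $n-k$ up to level $n$ and inserting the hyperbolic-time estimate $\prod_{i=n-k}^{n-1}\|Df(f^{i}(x))^{-1}\|\le\sigma^{k/2}$ gives
$$d\bigl(f^{\,n-k}(y),f^{\,n-k}(z)\bigr)\ \le\ \eta^{k}\!\!\prod_{i=n-k}^{n-1}\!\!\|Df(f^{i}(x))^{-1}\|\cdot d\bigl(f^{n}(y),f^{n}(z)\bigr)\ \le\ \eta^{k}\sigma^{k/2}\,d\bigl(f^{n}(y),f^{n}(z)\bigr),$$
which is item~(ii) once $\eta$ is fixed close enough to $1$ (see the remark below). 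For the last assertion, each $f^{j}(y)$ is $\delta$-close to $f^{j}(x)$, so $\|Df(f^{j}(y))^{-1}\|\le\eta\,\|Df(f^{j}(x))^{-1}\|$ and therefore $\|Df^{n}(y)^{-1}\|\le\prod_{j=0}^{n-1}\|Df(f^{j}(y))^{-1}\|\le\eta^{n}\prod_{j=0}^{n-1}\|Df(f^{j}(x))^{-1}\|\le\eta^{n}\sigma^{n/2}$.

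\emph{On the rate and the main obstacle.} Strictly, the computation above yields item~(ii) and the derivative bound with $\sigma^{k/2}$ (resp.\ $\sigma^{n/2}$) replaced by $\rho^{k/2}$ (resp.\ $\rho^{n/2}$) for an arbitrary fixed $\rho\in(\sigma,1)$; this is all that is used in the sequel, and in \cite{ABV} the defining inequality of a hyperbolic time is stated with the exponent $k$ in place of $k/2$, which is exactly the slack that absorbs the distortion factor $\eta^{k}$ and recovers the stated rate verbatim. The single genuinely delicate point is to make the distortion control \emph{uniform}: the radius $r_{1}(\eta)$, the constant $\eta$ and the final $\delta$ must be chosen independently of $x$, of $n$ and of the level $j$, so that $\delta$ does not degenerate; this is precisely where compactness of $M$ and uniform continuity of $Df$ are essential, and it is also what forces the distortion to accumulate only as $\eta^{\,n-j}$, which is harmless against the exponential gain $\sigma^{(n-j)/2}$ coming from the hyperbolic time. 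All remaining details are the routine backward-induction bookkeeping of \cite[Lemma~5.2]{ABV}.
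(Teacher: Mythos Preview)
Your proposal is correct and follows exactly the approach the paper intends: the paper does not give its own proof of this proposition but simply refers the reader to \cite[Lemma~5.2]{ABV}, and what you have written is a faithful reconstruction of that argument (nested pull-backs of a fixed small ball, backward induction controlled by the hyperbolic-time product, uniform distortion from compactness and continuity of $Df$). Your remark about the distortion factor $\eta^{k}$ and the resulting rate $\rho^{k/2}$ with $\rho\in(\sigma,1)$ is well taken and is the usual way this is handled; it has no bearing on anything used later in the paper.
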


We will refer to the sets $V_n$ as {\em hyperbolic pre-balls}. Note that the $n$-th iterate of a hyperbolic pre-ball, $f^n (V_n)$, is actually a topological  ball of radius $\delta> 0$.
\begin{figure}[!htb]
	\centering
	\includegraphics[scale=0.8]{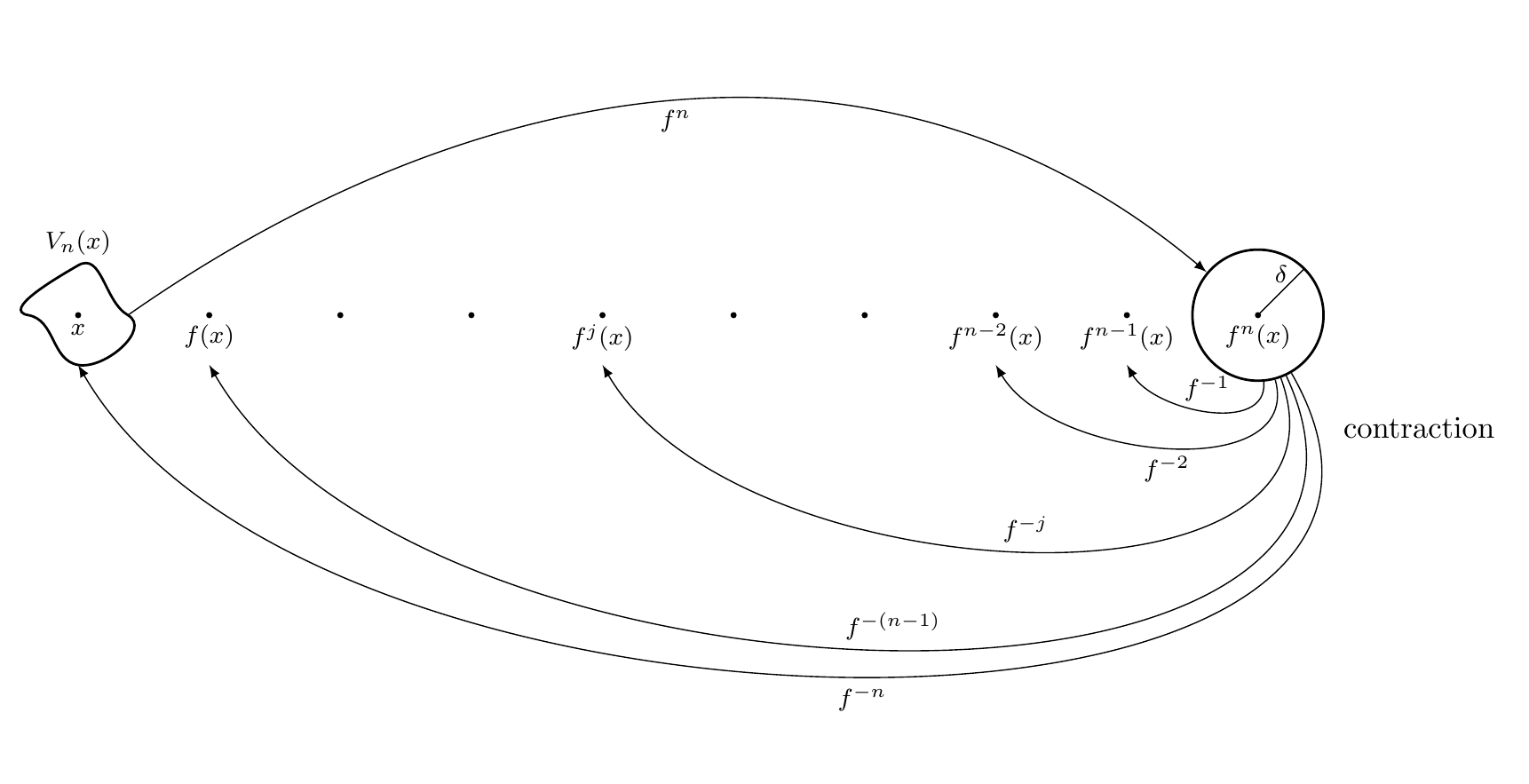}
	\caption{Hyperbolic pre-balls.}
\end{figure}

Given $f \in \mathcal{F}$,  fix $\delta>0$ provided by  Proposition \ref{delta}. Note that, $\delta>0$ depends only on $f$ and $\sigma$. 
Moreover, $\delta$ can be taken uniformly in a neighborhood of $f$, see [Remark 3.5, \cite{ARS}].
From now on  we fix $\delta>0$  as above.


\begin{Lema}\label{lemmahyper}
There exists $\widetilde{N}$ such that 
	\[
	f^{\widetilde{N}}(B(x,\delta)) = M \quad  \mbox{for all} \ x\in M.
	\]
Moreover,  for any $n\in \N$ and any hyperbolic pre-ball $V_n$ we have
	\[
	f^{\n + n} (V_n) = M
	\]
\end{Lema}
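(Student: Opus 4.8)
The plan is to prove Lemma~\ref{lemmahyper} in two stages, first establishing the uniform covering time for $\delta$-balls and then deducing the statement for hyperbolic pre-balls by comparing a pre-ball with a genuine $\delta$-ball via Proposition~\ref{delta}.

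\textbf{Step 1: a uniform covering time for $\delta$-balls.} Since $f$ is strongly topologically mixing, for each $x\in M$ the open ball $B(x,\delta)$ admits some $N_x\in\mathbb N$ with $f^{N_x}(B(x,\delta))=M$. I would like to make $N_x$ uniform in $x$. First I would observe that strong topological mixing is monotone in the following weak sense: once $f^{N}(U)=M$ for an open set $U$, then $f^{N}(U')=M$ for every open $U'\supset U$; moreover one does \emph{not} automatically get $f^{N+1}(U)=M$, so I must be a little careful and instead argue by compactness directly. Cover $M$ by finitely many balls $B(x_1,\delta/2),\dots,B(x_p,\delta/2)$; for each $i$ pick $N_i$ with $f^{N_i}(B(x_i,\delta/2))=M$. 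Now given any $x\in M$, it lies in some $B(x_i,\delta/2)$, hence $B(x_i,\delta/2)\subset B(x,\delta)$, so $f^{N_i}(B(x,\delta))\supseteq f^{N_i}(B(x_i,\delta/2))=M$. Thus every $\delta$-ball surjects onto $M$ in at most $N_i$ steps for some $i$. To get a \emph{single} exponent $\widetilde N$ working for all $x$ simultaneously (the statement asks for one $\widetilde N$ with $f^{\widetilde N}(B(x,\delta))=M$ for all $x$), I would take $\widetilde N=\max_i N_i$ and then use surjectivity of $f$: since $f(M)=M$ (each $f\in\mathcal F$ is a local diffeomorphism of the compact connected $M$, hence surjective), if $f^{N_i}(B(x,\delta))=M$ then $f^{N_i+1}(B(x,\delta))=f(M)=M$, and iterating, $f^{k}(B(x,\delta))=M$ for all $k\ge N_i$. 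Hence $f^{\widetilde N}(B(x,\delta))=M$ for every $x\in M$, which is the first assertion.

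\textbf{Step 2: hyperbolic pre-balls.} Let $V_n$ be a hyperbolic pre-ball, associated to a hyperbolic time $n$ for some point $x$. By Proposition~\ref{delta}(i), $f^n$ maps $V_n$ diffeomorphically onto the ball $B(f^n(x),\delta)$. Therefore $f^{n+\widetilde N}(V_n)=f^{\widetilde N}\bigl(f^n(V_n)\bigr)=f^{\widetilde N}\bigl(B(f^n(x),\delta)\bigr)=M$ by Step~1, which is exactly the second assertion (with the roles of $\widetilde N$ and $n$ written as $f^{\widetilde N+n}(V_n)=M$).

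\textbf{Main obstacle.} The only genuinely delicate point is upgrading the pointwise/finite covering times $N_x$ (or $N_i$) to a single uniform $\widetilde N$; the clean way is the surjectivity trick $f(M)=M\Rightarrow f^{k}(B(x,\delta))=M$ for all $k\ge N_x$, which requires nothing beyond $f$ being an onto local diffeomorphism of the compact manifold $M$. Everything else is a direct compactness argument plus an application of Proposition~\ref{delta}. I would also remark that $\delta$ here is the one fixed after Proposition~\ref{delta}, so it is a quantity depending only on $f$ and $\sigma$, and hence $\widetilde N$ depends only on $f$ and $\sigma$ as well.
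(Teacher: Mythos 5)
Your proof is correct and follows essentially the same route as the paper: a finite cover of $M$ by small balls, a mixing time for each element, a single uniform exponent obtained via surjectivity of $f$ (the paper takes $\widetilde N=N_1+\cdots+N_l$ where you take the max, but both uniformizations rest on $f(M)=M$), and then Proposition~\ref{delta}(i) to reduce the pre-ball case to the $\delta$-ball case. Your explicit justification of the uniformization step is in fact slightly more careful than the paper's, which dismisses it as straightforward.
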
	

\begin{proof}
	Consider a cover of $M$ by balls of radius $0<\eps<\frac{\de}{4}$. Since $M$ is a compact manifold, we can extract  a finite subcover of $M$,  namely \linebreak $\mathcal{B} =\{B_1, \dots, B_l\}$. From the property of topologically mixing of $f$, we can consider $N_1, \dots, N_l \in \N$ such that 
	\[
	f^{N_1}(B_1) = M,  \dots,  	f^{N_l}(B_l) = M.
	\]
	
	Let $\n = N_1 + \dots + N_l$. It is straightforward to see that
	\[
	f^{\n}(B_j) = M, \qquad 1\leq j\leq l.
	\]
	Given a ball $B(x,\delta)$ for some $x\in M$, there exists $B_{\tilde{l}}\in \mathcal{B}$, $1\leq \tilde{l}\leq k$, such that $B_{\tilde{l}}\subset B(x,\delta)$. 
	
	Therefore,
	\[
	f^{\n}(B(x,\delta)) = M. 
	\]

Now, for the second part of the lemma, consider the hyperbolic pre-ball $V_n(x)$ associated to some $x\in M$. From Proposition \ref{delta} we have 
	\begin{equation}\label{abv}
	f^n(V_n) = B(f^n(x), \de).
	\end{equation}
	On the other hand, from what we have proved 
	\begin{equation}\label{lant}
	f^{\n}(B(f^n(x), \de)) = M.
	\end{equation}
	Thus, by \eqref{abv} and \eqref{lant} we conclude that
	\[
	f^{\n + n}(V_n) = M. 
	\]
\end{proof}

Fix $\n \in \N$ given by the first part of the lemma. Consider
\[
\widetilde{n} = \inf_{n\in \N}\{n \,\, \mbox{hyperbolic time for some}\,\, x\in M |\, n\geq 2\n\}.
\]

Let $V_{\nn}(z)$ be the hyperbolic pre-ball associated to some $z\in M$. We define
\begin{equation}\label{ene}
N:= \n + \nn
\end{equation}
Note that $N > 3\n$.

By the second part of Proposition \ref{delta}, given $x,y\in M$, there exist $x_N,y_N\in V_{\nn}(z)$ such that 
\[
f^N(x_N) = x \qquad \mbox{and}\qquad f^N(y_N) = y.
\]
Moreover, if $d(x,y)<\de$, thus 
\begin{align}\label{hyper}
d(x_N, y_N) &\leq \sigma^{\nn/2} \cdot d(f^{\nn}(x_N), f^{\nn}(y_N))\nonumber\\
&\leq \sigma^{\nn/2} \cdot \vartheta^{\n} \cdot d(x,y)\nonumber\\
& \leq \sigma^{\nn/2 - \n}\cdot (\sigma\cdot \vartheta)^{\n} \cdot d(x,y)\nonumber\\
& = \gamma \cdot d(x,y),
\end{align}
 where we denote $\gamma = \sigma^{\nn/2 - \n}\cdot (\sigma\cdot \vartheta)^{\n}$. Note that $\gamma \ll 1$ since $\sigma <1$, $\nn>2\n$ and by hypothesis \eqref{h1} we have $\sigma\cdot \vartheta <1$.


\subsection{Projective metrics}


In this section we will state some definitions and results regarding projective metrics associated to convex  cones. 
The notion of projective metric associated to a convex cone in a vector space was introduced by Garrett  Birkhoff \cite{Birkhoff} and provides a nice way to explicit spectral properties of the transfer operator (see \cite{Baladi}, \cite{Liverani} and \cite{VianaStoch}, for instance).  In particular, we will derive the spectral gap property for the transfer operator through this notion.

Let $E$ be a Banach space. A subset $\mathcal{C}$ of $E  \!-\! \{0\}$ is called a \emph{cone} in $E$ if $C\cap (-C) = \{0\}$ and it satisfies:
\[
v\in C \quad \mbox{and} \quad \lambda >0 \quad \Rightarrow  \quad  \lambda \cdot v\in C. 
\]

A cone $C$ is called \emph{convex} if 
\[
v,w\in C \quad \mbox{and} \quad \lambda,\eta >0 \quad \Rightarrow  \quad  \lambda \cdot v + \eta\cdot w\in C. 
\]

The closure of $C$, denoted by $\bar{C}$, is defined by 
\[
\bar{C} := \left\{ w\in E| \,\, \mbox{there are}\,\, v\in C\,\, \mbox{and}\,\, \lambda_n\to 0 \,\,\mbox{such that}\,\,\right.\] \[\left. (w+\lambda_nv)\in C\,\, \mbox{for all}\,\, n\geq 1\right\}.
\]

A cone $\mathcal{C} $ is called \emph{closed} if $\bar{\mathcal{C}}= \mathcal{C} \cup \{0\}$.

Let $\mathcal{C}$ be a closed convex cone and given $v, w \in \mathcal{C} $ define 
\begin{equation*}\label{A e B}
A(v,w)= \sup \left\{t>0 : w-tv \in \mathcal{C}  \right\} \ \mbox{and} \ B(v,w)=\inf \left\{s>0 : sv -w \in \mathcal{C}  \right\},
\end{equation*}
with the convention $\sup \emptyset = 0$ and $\inf \emptyset = +\infty$, where $\emptyset$ denotes the empty set.

It is easy to verify that $A(v,w)$ is finite, $B(v,w)$ is positive and $A(v,w)\leq B(v,w)$ for all $v, w \in \mathcal{C}$ (see \cite{VianaStoch}). 
We set  
$$ \Theta(v,w) = \log\left( \frac{B(v,w)}{A(v,w)} \right), $$ 
with $\Theta$ possibly infinity in the case $A=0$ or $B=+\infty$.

By virtue of properties of $A$ and $B$, we derive that
 $\Theta(v,w)$ is well-defined and takes values in $[0,+\infty]$. Since $\Theta(v,w)= 0$ if and only if $v= tw$ for some $t>0$, we derive  $\Theta$ defines a pseudo-metric on $\mathcal{C}$. In this way,  $\Theta$ induces a metric on a projective quotient space of $\mathcal{C}$ called  the \emph{projective metric of} $\mathcal{C}$.  

We point out that the projective metric depends in a monotone way on the cone: if $\mathcal{C}_1 \subset \mathcal{C}_2$ are two convex cones in $E$, then 
$\Theta_2(v,w) \leq \Theta_1(v,w)$ {for any} $v,w \in \mathcal{C}_1$, 
where $\Theta_1$ and $\Theta_2$ are the projective metrics in $\mathcal{C}_1$ and $\mathcal{C}_2$,  respectively.

Furthermore, note that if $E_1,E_2$ are Banach space, $L:{E}_1 \to {E}_2 $ is a linear operator, and $\mathcal{C}_1, \mathcal{C}_2$ are convex cones in ${E}_{1}, {E}_{2}$, respectively, such that $L(\mathcal{C}_{1}) \subset \mathcal{C}_{2}$, then $\Theta_2(L(v),L(w)) \leq \Theta_1(v,w)$ {for any} $v,w \in \mathcal{C}_1$, where $\Theta_1$ and $\Theta_2$ are the projective metrics in $\mathcal{C}_1$ and $\mathcal{C}_2$,  respectively.

In general,  $L$ need not be a strict contraction, that will be the case for instance if  $L(\mathcal{C}_{1})$ had finite diameter in $\mathcal{C}_{2}$. The next  result will be a key tool to establish the spectral gap for the Ruelle--Perron--Frobenius operator. Its proof can be found in [\cite{VianaStoch}, Proposition 2.3].

\begin{Prop}\label{cont viana}
	Let $\mathcal{C}_{1}$ and $\mathcal{C}_{2}$ be closed convex cones in the Banach spaces ${E}_1$ and ${E}_2$, respectively. If $L:E_1 \to E_2 $ is a linear operator such that $L(\mathcal{C}_{1}) \subset \mathcal{C}_{2}$ and  $\Delta = {\rm diam}_{\Theta_2}(L(\mathcal{C}_{1})) < \infty$, then
	$$\Theta_2 \left(L(\varphi), L(\psi) \right) \leq (1-e^{-\Delta}) \cdot \Theta_1 \left( \varphi, \psi \right) \quad \mbox{for all} \ \varphi, \psi \in \mathcal{C}_{1}.$$
\end{Prop}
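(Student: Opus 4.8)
Since the non-strict contraction $\Theta_2(L\varphi,L\psi)\le\Theta_1(\varphi,\psi)$ has already been recorded in the text, the plan is to quantify the improvement produced by the finiteness of $\Delta=\operatorname{diam}_{\Theta_2}L(\mathcal{C}_1)$. First I would dispose of the trivial cases: if $\Theta_1(\varphi,\psi)=0$ then $\psi$ is a positive multiple of $\varphi$, hence so is $L\psi$ of $L\varphi$ and $\Theta_2(L\varphi,L\psi)=0$; and if $\Theta_1(\varphi,\psi)=+\infty$ there is nothing to prove (when $\Delta=0$ the cone $L(\mathcal{C}_1)$ is a single ray, so again $\Theta_2(L\varphi,L\psi)=0$). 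So assume $0<\Theta_1(\varphi,\psi)<\infty$ and set $A_1=A(\varphi,\psi)$, $B_1=B(\varphi,\psi)$ with respect to $\mathcal{C}_1$, so that $0<A_1<B_1<\infty$ and $\Theta_1(\varphi,\psi)=\log(B_1/A_1)$. Fix reals $t<A_1<B_1<s$. By definition of $A_1,B_1$ both $\psi-t\varphi$ and $s\varphi-\psi$ lie in $\mathcal{C}_1$, hence their images $q:=L(\psi-t\varphi)$ and $p:=L(s\varphi-\psi)$ lie in $L(\mathcal{C}_1)\subset\mathcal{C}_2$, and therefore $\Theta_2(p,q)\le\Delta$.

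The engine of the argument is the identity expressing $\varphi$ and $\psi$ as positive combinations of $s\varphi-\psi$ and $\psi-t\varphi$, namely
$$(s-t)\,\varphi=(s\varphi-\psi)+(\psi-t\varphi),\qquad (s-t)\,\psi=s\,(\psi-t\varphi)+t\,(s\varphi-\psi).$$
Applying the linear operator $L$ and using that $\Theta_2$ is invariant under multiplication by positive scalars gives $\Theta_2(L\varphi,L\psi)=\Theta_2(p+q,\,sq+tp)$. Writing $a_0=A(q,p)$ and $b_0=B(q,p)$ in $\mathcal{C}_2$ (so $0<a_0\le b_0\le e^{\Delta}a_0$), I would then derive
$$A(p+q,\,sq+tp)\ \ge\ \frac{s+b_0\,t}{1+b_0},\qquad B(p+q,\,sq+tp)\ \le\ \frac{s+a_0\,t}{1+a_0}.$$
These come from rewriting $(sq+tp)-\xi(p+q)=(\xi-t)\big(\tfrac{s-\xi}{\xi-t}\,q-p\big)$ and $\eta(p+q)-(sq+tp)=(\eta-t)\big(p-\tfrac{s-\eta}{\eta-t}\,q\big)$ and reading off, from $b_0q-p\in\bar{\mathcal{C}}_2$, $p-a_0q\in\bar{\mathcal{C}}_2$ and convexity of $\mathcal{C}_2$, for which $\xi,\eta$ the right-hand factors lie in $\bar{\mathcal{C}}_2$; this uses the routine fact that replacing $\mathcal{C}_2$ by $\bar{\mathcal{C}}_2$ in the definitions of $A,B$ changes nothing. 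Consequently
$$e^{\Theta_2(L\varphi,L\psi)}=\frac{B(p+q,sq+tp)}{A(p+q,sq+tp)}\ \le\ \frac{(s+a_0t)(1+b_0)}{(s+b_0t)(1+a_0)}.$$

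Finally I would bound this scalar quantity. Setting $r=s/t>1$ and $c=e^{-\Delta}\in(0,1]$, the map $(a,b)\mapsto\frac{(r+a)(1+b)}{(r+b)(1+a)}$ is strictly decreasing in $a$ and strictly increasing in $b$, so on the admissible region $cb\le a\le b$ it is maximized at $a=cb$; substituting and minimizing the denominator via $\tfrac{r}{b}+cb\ge 2\sqrt{rc}$ yields
$$e^{\Theta_2(L\varphi,L\psi)}\ \le\ 1+\frac{(r-1)(1-c)}{(1+\sqrt{rc})^{2}}.$$
Letting $t\uparrow A_1$ and $s\downarrow B_1$, so that $r\to B_1/A_1=e^{\Theta_1(\varphi,\psi)}$, the claimed estimate $\Theta_2(L\varphi,L\psi)\le(1-e^{-\Delta})\Theta_1(\varphi,\psi)$ reduces to the one-variable inequality $\log\!\big(1+\tfrac{(r-1)(1-c)}{(1+\sqrt{rc})^{2}}\big)\le(1-c)\log r$ for all $r\ge1$ and $c\in[0,1]$, which is an equality at $r=1$ and at $c\in\{0,1\}$ and is settled by an elementary calculus check. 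The step I expect to be the main obstacle is precisely this last chain: extracting the two sharp bounds on $A$ and $B$ while keeping honest track of the closure $\bar{\mathcal{C}}_2$, and then verifying the scalar inequality that reproduces exactly the constant $1-e^{-\Delta}$; everything else is formal manipulation with convex cones.
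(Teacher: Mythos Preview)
The paper does not prove this proposition; it simply refers the reader to \cite{VianaStoch}, Proposition~2.3. Your argument is the standard Birkhoff--Hopf contraction proof and is correct.

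For the step you flag as the main obstacle, the scalar inequality becomes transparent once you notice the algebraic identity
\[
1+\frac{(r-1)(1-c)}{(1+\sqrt{rc})^{2}}\;=\;\Bigl(\frac{\sqrt r+\sqrt c}{1+\sqrt{rc}}\Bigr)^{2},
\]
obtained by expanding $(1+\sqrt{rc})^{2}+(r-1)(1-c)=(\sqrt r+\sqrt c)^{2}$. The claim then reduces to showing that
\[
g(r)\;=\;(1-c)\log r-2\log\frac{\sqrt r+\sqrt c}{1+\sqrt{rc}}
\]
is non-negative for $r\ge1$; since $g(1)=0$ and a direct computation gives
\[
g'(r)\;=\;\frac{1-c}{r}\left(1-\frac{\sqrt r}{(\sqrt r+\sqrt c)(1+\sqrt{rc})}\right)\;\ge\;0,
\]
this is immediate. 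With this simplification in hand, the ``elementary calculus check'' you defer is genuinely elementary.
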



Our goal is to apply the last result to the transfer operator acting in  a special class of cones. More precisely those of locally H\"older continuous functions that we define as follows.
  
Given $\delta>0$  a function $\varphi$ is said to be  $(C, \alpha)$-H\"older continuous in balls of radius $\delta $ if for some constant $C>0$  we have $$|\varphi(x)-\varphi(y)|\leq Cd(x, y)^\alpha \ \mbox{ for all} \  y \in B(x, \delta).$$
Denote by $|\varphi|_{\alpha,\delta}$ the smallest H\"older constant of $\varphi$ in balls of radius $\delta>0$.

We fix $\delta>0$ and consider for each $k>0$  the convex cone of locally H\"older continuous observables defined on $M$: 
\begin{equation} \label{cone holder}
\mathcal{C}_{k,\delta}= \left\{ \varphi : \varphi>0 \ \mbox{and} \  \frac{|\varphi|_{\alpha,\delta}}{\inf \varphi} \leq k \right\}.
\end{equation}

Considering the classes of cones of locally H\"older continuous observables, it is possible to give a more explicit expression to the projective metric, which we will  be denoted by $\Theta_k$.  The use of this  expression allow to prove that the diameter of  a cone, $\mathcal{C}_{k,\delta}$ is finite, if  $k$ is large enough. We state these results below and refer the reader to \cite{RS17} for their proofs.  

\begin{Lema}\label{metrica cone}[\cite{RS17}, Lemma 4.3]
	The metric $\Theta_k$ in the cone $\mathcal{C}_{k,\delta}$ is given by \[\Theta_k(\varphi , \psi)= \log \left( \frac{B_k(\varphi, \psi)}{ A_k(\varphi, \psi) }\right),\] where  
	$$A_k(\varphi, \psi):=\displaystyle\inf_{d(x,y)< \delta, z\in M} \frac{k|x-y|^{\alpha}\psi(z) - (\psi(x)-\psi(y))}{k|x-y|^{\alpha}\varphi(z) - (\varphi(x) - \varphi(y))} $$
	and $$B_k(\varphi, \psi) := \displaystyle\sup_{d(x,y)< \delta, z\in M} \frac{k|x-y|^{\alpha}\psi(z) - (\psi(x)-\psi(y))}{k|x-y|^{\alpha}\varphi(z) - (\varphi(x) - \varphi(y))}.$$
In particular, we have  $$A_k(\varphi, \psi)\leq \inf_{x\in M}\left\{\frac{\varphi(x)}{\psi(x)}\right\} \quad \mbox{and} \quad B_k(\varphi, \psi)\geq \sup_{x\in M}\left\{\frac{\varphi(x)}{\psi(x)} \right\}.$$
\end{Lema}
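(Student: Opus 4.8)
The plan is to establish the formula by computing the quantities $A(\cdot,\cdot)$ and $B(\cdot,\cdot)$ of the cone $\mathcal{C}_{k,\delta}$ directly from their definitions, exploiting the explicit description \eqref{cone holder}. First I would check the preliminary fact that $\mathcal{C}_{k,\delta}$ is a closed convex cone, so that $A$, $B$ and $\Theta_k$ are genuinely well defined on it: convexity follows from $|\lambda\varphi+\eta\psi|_{\alpha,\delta}\le\lambda|\varphi|_{\alpha,\delta}+\eta|\psi|_{\alpha,\delta}$ together with $\inf(\lambda\varphi+\eta\psi)\ge\lambda\inf\varphi+\eta\inf\psi$, while closedness follows because the defining inequalities $\varphi\ge 0$ and $|\varphi|_{\alpha,\delta}\le k\inf\varphi$ are preserved under the limiting procedure defining $\bar{\mathcal{C}}_{k,\delta}$. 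Since $\Theta$ is symmetric in its two arguments (one has $B(v,w)=1/A(w,v)$ by rescaling, hence $B(v,w)/A(v,w)=B(w,v)/A(w,v)$), it suffices to identify, for $\varphi,\psi\in\mathcal{C}_{k,\delta}$,
\[
A(\varphi,\psi)=\sup\{t>0:\ \psi-t\varphi\in\bar{\mathcal{C}}_{k,\delta}\},\qquad
B(\varphi,\psi)=\inf\{s>0:\ s\varphi-\psi\in\bar{\mathcal{C}}_{k,\delta}\}.
\]

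The core step is to unravel the membership $\psi-t\varphi\in\bar{\mathcal{C}}_{k,\delta}$. By \eqref{cone holder} this is equivalent to the conjunction of (i) $\psi-t\varphi\ge 0$ on $M$, i.e.\ $t\le\inf_z\psi(z)/\varphi(z)$, and (ii) $|\psi-t\varphi|_{\alpha,\delta}\le k\inf_M(\psi-t\varphi)$, i.e.\ for every $z\in M$ and every pair $x\neq y$ with $d(x,y)<\delta$,
\[
\pm\big[(\psi(x)-\psi(y))-t(\varphi(x)-\varphi(y))\big]\le k\,d(x,y)^{\alpha}\,(\psi(z)-t\varphi(z)).
\]
Rearranging this inequality, which is affine in $t$, the coefficient of $t$ appearing on the relevant side is exactly $k\,d(x,y)^{\alpha}\varphi(z)-(\varphi(x)-\varphi(y))$; the crucial observation is that this is nonnegative precisely because $\varphi\in\mathcal{C}_{k,\delta}$ forces $|\varphi(x)-\varphi(y)|\le|\varphi|_{\alpha,\delta}d(x,y)^{\alpha}\le k\,d(x,y)^{\alpha}\inf\varphi\le k\,d(x,y)^{\alpha}\varphi(z)$. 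Dividing by this coefficient, condition (ii) reads $t\le\frac{k\,d(x,y)^{\alpha}\psi(z)-(\psi(x)-\psi(y))}{k\,d(x,y)^{\alpha}\varphi(z)-(\varphi(x)-\varphi(y))}$ for all admissible $x,y,z$, the two choices of sign corresponding to interchanging $x$ and $y$. Taking the infimum over all such configurations yields $A(\varphi,\psi)=A_k(\varphi,\psi)$, once one notes that this infimum is automatically dominated by $\inf_z\psi(z)/\varphi(z)$, seen by letting $d(x,y)\to 0$ so that the ratio tends to $\psi(z)/\varphi(z)$; the computation for $B(\varphi,\psi)=B_k(\varphi,\psi)$ is identical with infima replaced by suprema. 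Hence $\Theta_k(\varphi,\psi)=\log\big(B(\varphi,\psi)/A(\varphi,\psi)\big)=\log\big(B_k(\varphi,\psi)/A_k(\varphi,\psi)\big)$, and the two displayed inequalities in the ``in particular'' part come out of the same limiting device, specializing to test configurations with $d(x,y)\to 0$ and $z$ chosen (sub)optimally.

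The step I expect to require the most care is the bookkeeping around degenerate configurations, namely those with $k\,d(x,y)^{\alpha}\varphi(z)=\varphi(x)-\varphi(y)$, where the division above is not licit, together with the strict-versus-nonstrict positivity issue in \eqref{cone holder}. The clean way around this is to work throughout with the closure $\bar{\mathcal{C}}_{k,\delta}$ rather than $\mathcal{C}_{k,\delta}$ and to verify that such boundary configurations never lower the infimum defining $A_k$ (nor raise the supremum defining $B_k$), so that the closed-form expressions for $A_k$ and $B_k$ hold verbatim; this is routine but is where one must be attentive.
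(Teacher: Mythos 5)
The paper itself does not prove this lemma; it is quoted from [\cite{RS17}, Lemma 4.3] with the proof deferred there. Your argument is the standard one: check that $\mathcal{C}_{k,\delta}$ is a closed convex cone, unravel $\psi-t\varphi\in\bar{\mathcal C}_{k,\delta}$ into a family of inequalities affine in $t$, observe that the coefficient $k\,d(x,y)^{\alpha}\varphi(z)-(\varphi(x)-\varphi(y))$ is nonnegative because $\varphi\in\mathcal{C}_{k,\delta}$, and divide. Your symmetry remark $B(v,w)=1/A(w,v)$ is correct and is in fact needed here, because the lemma as printed mixes two orientations: the displayed formula for $A_k(\varphi,\psi)$ (with $\psi$ in the numerator) is what comes out of $A(\varphi,\psi)=\sup\{t:\psi-t\varphi\in\bar{\mathcal C}\}$ and satisfies $A_k(\varphi,\psi)\le\inf\psi/\varphi$, whereas the ``in particular'' clause asserts $A_k(\varphi,\psi)\le\inf\varphi/\psi$. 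These are reconciled only by swapping the arguments, which leaves $\Theta_k$ unchanged by your symmetry observation; you should make that reconciliation explicit rather than leave it implicit, since it is exactly the inequality $A_k(\varphi,\psi)\le\inf\varphi/\psi$ that is later invoked in the proof of Proposition \ref{h}.

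The one step that does not hold as written is the justification of the domination $A_k(\varphi,\psi)\le\inf_z\psi(z)/\varphi(z)$ by ``letting $d(x,y)\to 0$ so that the ratio tends to $\psi(z)/\varphi(z)$''. Rewriting the ratio as
\[
\frac{k\psi(z)-\tfrac{\psi(x)-\psi(y)}{d(x,y)^{\alpha}}}{k\varphi(z)-\tfrac{\varphi(x)-\varphi(y)}{d(x,y)^{\alpha}}},
\]
one sees that the difference quotients need not vanish as $d(x,y)\to 0$ for merely $\alpha$-H\"older functions (e.g.\ $\psi(x)=d(x,x_0)^{\alpha}$ near $x_0$), so the limit is not $\psi(z)/\varphi(z)$ in general. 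The elementary fix: for fixed $z$ and a fixed pair $x\neq y$, at least one of the two orderings of $(x,y)$ yields a ratio $\le\psi(z)/\varphi(z)$, because after cross-multiplying this reduces to $\pm\bigl(\psi(z)(\varphi(x)-\varphi(y))-\varphi(z)(\psi(x)-\psi(y))\bigr)\le 0$ and one of the two signs always works. (Equivalently, the positivity requirement $\psi-t\varphi\ge 0$ already gives $A(\varphi,\psi)\le\inf\psi/\varphi$ a priori; the swap argument is then only needed to show this constraint is never the binding one, so that $A(\varphi,\psi)$ equals $A_k(\varphi,\psi)$ rather than the minimum of two quantities.) With this repair, and the care you already announce for the degenerate configurations where the denominator vanishes, the proof is complete.
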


\begin{Prop}\label{diam finito}[\cite{RS17}, Proposition  5.3]
The cone $\mathcal{C}_{\hat{\lambda}k,\delta}$ has finite diameter for $k>0$ sufficiently large.
\end{Prop}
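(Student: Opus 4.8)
The plan is to feed the explicit formula for $\Theta_k$ from Lemma~\ref{metrica cone} into a uniform estimate, that is, to bound $\Theta_k(\varphi,\psi)$ by a constant $\Delta$ independent of the chosen pair $\varphi,\psi\in\mathcal{C}_{\hat{\lambda}k,\delta}$. Taking $k$ sufficiently large only serves to make the contraction factor $\hat{\lambda}$ strictly smaller than $1$, so that $\mathcal{C}_{\hat{\lambda}k,\delta}\subset\mathcal{C}_{k,\delta}$ and $\Theta_k$ is indeed the ambient projective metric. Fix $\varphi,\psi\in\mathcal{C}_{\hat{\lambda}k,\delta}$. For $x,y\in M$ with $d(x,y)<\delta$ and any $z\in M$, the cone condition gives $|\varphi(x)-\varphi(y)|\le\hat{\lambda}k\,d(x,y)^{\alpha}\inf\varphi\le\hat{\lambda}k\,d(x,y)^{\alpha}\varphi(z)$, and likewise for $\psi$. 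Hence
$$(1-\hat{\lambda})\,k\,d(x,y)^{\alpha}\varphi(z)\ \le\ k\,d(x,y)^{\alpha}\varphi(z)-\bigl(\varphi(x)-\varphi(y)\bigr)\ \le\ (1+\hat{\lambda})\,k\,d(x,y)^{\alpha}\varphi(z),$$
with the same two-sided bound for $\psi$; these quantities are positive because $\hat{\lambda}<1$. Dividing the $\psi$-expression by the $\varphi$-expression, the quotient appearing in the definitions of $A_k(\varphi,\psi)$ and $B_k(\varphi,\psi)$ lies between $\tfrac{1-\hat{\lambda}}{1+\hat{\lambda}}\,\tfrac{\psi(z)}{\varphi(z)}$ and $\tfrac{1+\hat{\lambda}}{1-\hat{\lambda}}\,\tfrac{\psi(z)}{\varphi(z)}$, so that
$$A_k(\varphi,\psi)\ \ge\ \frac{1-\hat{\lambda}}{1+\hat{\lambda}}\,\inf_{z\in M}\frac{\psi(z)}{\varphi(z)},\qquad B_k(\varphi,\psi)\ \le\ \frac{1+\hat{\lambda}}{1-\hat{\lambda}}\,\sup_{z\in M}\frac{\psi(z)}{\varphi(z)}.$$

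It then remains to bound $\bigl(\sup_{M}\psi/\varphi\bigr)\big/\bigl(\inf_{M}\psi/\varphi\bigr)$ uniformly over the cone, which I would do by a chaining argument. Since $M$ is compact and connected, it is covered by finitely many balls $B_1,\dots,B_p$ of radius $\delta/2$, and any two points of $M$ can be joined by a chain of at most $p$ of these balls with consecutive ones overlapping. Two points lying in a common ball of radius $\delta/2$ are at distance less than $\delta$, so the cone condition yields $\varphi(u)\le(1+\hat{\lambda}k\delta^{\alpha})\,\varphi(v)$ for such $u,v$; propagating this inequality along a chain gives $\sup_{M}\varphi\le(1+\hat{\lambda}k\delta^{\alpha})^{p}\inf_{M}\varphi$, and identically for $\psi$, with a constant depending only on $k,\delta,\alpha$ and the (fixed) cover. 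Therefore
$$\frac{\sup_{M}\psi/\varphi}{\inf_{M}\psi/\varphi}\ \le\ \frac{\sup_{M}\psi}{\inf_{M}\psi}\cdot\frac{\sup_{M}\varphi}{\inf_{M}\varphi}\ \le\ (1+\hat{\lambda}k\delta^{\alpha})^{2p}.$$

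Combining the three displays,
$$\Theta_k(\varphi,\psi)=\log\!\left(\frac{B_k(\varphi,\psi)}{A_k(\varphi,\psi)}\right)\ \le\ \log\!\left[\left(\frac{1+\hat{\lambda}}{1-\hat{\lambda}}\right)^{2}(1+\hat{\lambda}k\delta^{\alpha})^{2p}\right]\ =:\ \Delta,$$
a finite constant independent of $\varphi$ and $\psi$, whence $\mathrm{diam}_{\Theta_k}\bigl(\mathcal{C}_{\hat{\lambda}k,\delta}\bigr)\le\Delta<\infty$. I expect the only delicate point to be the chaining step, namely producing a single number $p$, valid for every pair of points of $M$, bounding the number of $\delta/2$-balls needed to connect them; this is precisely what compactness together with connectedness of $M$ provide, and after that everything reduces to bookkeeping with the formula of Lemma~\ref{metrica cone}.
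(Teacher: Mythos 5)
Your argument is correct and is essentially the proof that the paper outsources to \cite{RS17}: use the explicit formula of Lemma~\ref{metrica cone}, squeeze the quotient between $\tfrac{1-\hat{\lambda}}{1+\hat{\lambda}}\tfrac{\psi(z)}{\varphi(z)}$ and $\tfrac{1+\hat{\lambda}}{1-\hat{\lambda}}\tfrac{\psi(z)}{\varphi(z)}$ via the strict cone inclusion, and then control $\sup\varphi/\inf\varphi$ by a Harnack-type chaining bound --- your chaining step is precisely the paper's Lemma~\ref{bolas}/Lemma~\ref{cotasup}, which you could simply have cited. One minor inaccuracy that does not affect validity: in this paper $\hat{\lambda}<1$ comes from condition~\eqref{2 estrelas} rather than from taking $k$ large (and $\mathcal{C}_{\hat{\lambda}k,\delta}\subset\mathcal{C}_{k,\delta}$ then holds for every $k$), so your estimate in fact works for all $k>0$.
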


We end this section with some technical results regarding H\"older continuous functions that will be used in the sequel.

\begin{Lema}\label{bolas} If 
	$\varphi: M \to \mathbb{R}$ is a $(C, \alpha)$-H\"older continuous function  in  balls of radius $\delta>0$, then there exists $m=m(\delta)>0$ such that $\varphi$ is $(m\cdot C, \alpha)$-H\"older continuous.
\end{Lema}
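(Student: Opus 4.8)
\emph{Proof plan.} This is a standard \emph{local-to-global} H\"older estimate, so the only case that requires an argument is a pair $x,y\in M$ with $d(x,y)\ge\delta$, since for $d(x,y)<\delta$ the hypothesis already gives $|\varphi(x)-\varphi(y)|\le C\,d(x,y)^{\alpha}$. I would first record that a function which is $(C,\alpha)$-H\"older in balls of radius $\delta$ is continuous, hence bounded on the compact manifold $M$; but I will deliberately \emph{not} estimate distant points by the crude bound $|\varphi(x)-\varphi(y)|\le 2\|\varphi\|_{0}$, because the assertion requires the final constant to depend only on $\delta$ (and on the fixed data $M,\alpha$), not on $\varphi$. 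Instead the bound for distant points must come from chaining through intermediate points at pairwise distance $<\delta$, with the number of links controlled solely by $\delta$.

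Concretely, set $D=\diam(M)<\infty$ and use that $M$ is connected (in our setting this follows from the strong topological mixing of $f$, since then $M=f^{N}(B)$ is the continuous image of a ball; some form of connectedness is genuinely needed for the statement). Given $x,y\in M$ with $\delta\le d(x,y)=:\ell$, Hopf--Rinow provides a minimizing geodesic of length $\ell$ joining them; I would take $N=\lfloor\ell/\delta\rfloor+1$ equally spaced points $x=p_{0},p_{1},\dots,p_{N}=y$ on it, so that $d(p_{i-1},p_{i})\le\ell/N<\delta$ for all $i$ and, using $\ell\ge\delta$, also $N\le 2\ell/\delta\le 2D/\delta$. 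Telescoping and applying the local H\"older hypothesis on each link gives
\[
|\varphi(x)-\varphi(y)|\le\sum_{i=1}^{N}|\varphi(p_{i-1})-\varphi(p_{i})|\le C\sum_{i=1}^{N}d(p_{i-1},p_{i})^{\alpha}\le C\,N\Big(\tfrac{\ell}{N}\Big)^{\alpha}=C\,N^{1-\alpha}\ell^{\alpha}.
\]
Finally I would bound $N^{1-\alpha}\le m$ with $m:=\max\{1,(2D/\delta)^{1-\alpha}\}$ (for $\alpha\le1$ using $N\le 2D/\delta$, for $\alpha\ge1$ using $N\ge1$), obtaining $|\varphi(x)-\varphi(y)|\le mC\,d(x,y)^{\alpha}$; since $m\ge1$, the same bound is trivially true when $d(x,y)<\delta$. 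Hence $\varphi$ is $(mC,\alpha)$-H\"older continuous with $m=m(\delta)$.

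There is no serious obstacle here; the one thing to be careful about — and the reason this is not a one-liner — is precisely keeping the multiplicative constant independent of $\varphi$, which forces the chaining formulation over the naive sup-norm estimate. A variant that avoids invoking Hopf--Rinow would fix once and for all a finite cover of $M$ by balls of radius $\delta/2$, form the (connected) graph on their centres with an edge between centres at distance $<\delta$, and take $m$ to be a constant plus the diameter of that graph, which again depends only on $\delta$; the telescoping step is unchanged.
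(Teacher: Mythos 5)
Your proof is correct and follows essentially the same route as the paper's: a telescoping/chaining argument through intermediate points at pairwise distance less than $\delta$, with the number of links bounded in terms of $\delta$ and $\diam(M)$ alone so that the final constant is independent of $\varphi$. The only difference is that you construct the chain explicitly (via a minimizing geodesic, or the covering-graph variant) and keep the slightly sharper factor $N^{1-\alpha}$, whereas the paper simply asserts the existence of such a chain "by compactness" and uses the cruder bound $C(N+1)d(x,y)^{\alpha}$.
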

\begin{proof} By the compactness of $M$, there exists $N\in\mathbb{N}$ which depends only on $\delta$ such that given $x, y\in M$ there are $z_{0}=x, z_{1},\dots , z_{N+1}=y$ with $d(z_{i}, z_{i+1})\leq\delta$ for all $i=0,\dots, N$ and $d(z_{i}, z_{i+1})\leq d(x, y).$
	
	Since $\varphi$ is $(C, \alpha)$-H\"older continuous in balls of radius $\delta$ it follows that
	$$\left |\varphi(x)-\varphi(y)\right |\leq \sum_{i=0}^{N} \left | \varphi(z_{i})-\varphi(z_{i+1}) \right |\leq \sum_{i=0}^{N} C d(z_{i}, z_{i+1})^{\alpha}\leq C(N\!+\!1)d(x, y)^{\alpha}.$$
	
	Therefore,  $\varphi$ is $(m\cdot C, \alpha)$-H\"older continuous where $m=N\!+\!1$.
\end{proof}

\begin{remark}\label{remark.v}
Note that considering balls of radius $({\vi}^N\cdot \de)$, by the proof of  Lemma \ref{bolas}, with $\vi$ satisfying  \eqref{h1}, we can conclude that $\varphi$ is 	$(([\vi^N]+1)\cdot C, \alpha)$-H\"older continuous, where $[\vi^N]$ denotes the  greatest integer less than or equal to $\vi^N$. 
\end{remark}

\begin{Lema}\label{cotasup}
	For each $\varphi \in \mathcal{C}_{k,\delta}$, 
	$$\sup\varphi \leq \inf \varphi \cdot (2\cdot m\cdot d^{\alpha})\cdot k,$$
	where $d$ denotes the diameter of $M$.
\end{Lema}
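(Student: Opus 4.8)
The plan is to bound $\sup\varphi$ in terms of $\inf\varphi$ using the local Hölder property of $\varphi\in\mathcal{C}_{k,\delta}$ together with the chain-of-balls argument already developed in the proof of Lemma \ref{bolas}. First I would recall that for $\varphi\in\mathcal{C}_{k,\delta}$ we have, by definition of the cone \eqref{cone holder}, that $|\varphi|_{\alpha,\delta}\leq k\inf\varphi$; that is, $\varphi$ is $(k\inf\varphi,\alpha)$-Hölder continuous in balls of radius $\delta$. By Lemma \ref{bolas} this upgrades to $\varphi$ being $(m\cdot k\inf\varphi,\alpha)$-Hölder continuous globally, where $m=m(\delta)=N+1$ is the constant from that lemma.

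Next I would pick $x_0,y_0\in M$ realizing (or approximating) $\sup\varphi$ and $\inf\varphi$ respectively — using compactness of $M$ these are attained. Then the global Hölder estimate gives
\begin{equation*}
\sup\varphi-\inf\varphi=\varphi(x_0)-\varphi(y_0)\leq m\cdot k\cdot\inf\varphi\cdot d(x_0,y_0)^{\alpha}\leq m\cdot k\cdot\inf\varphi\cdot d^{\alpha},
\end{equation*}
where $d=\diam M$. Rearranging yields $\sup\varphi\leq\inf\varphi\cdot(1+m\cdot d^{\alpha}\cdot k)$. To arrive at the stated bound $\sup\varphi\leq\inf\varphi\cdot(2\cdot m\cdot d^{\alpha})\cdot k$, I would absorb the additive $1$ into the main term: since $\varphi>0$ forces $\inf\varphi\leq\sup\varphi$, and since one may assume without loss of generality that $m\cdot d^{\alpha}\cdot k\geq 1$ (otherwise the cone is rather trivial, or one simply enlarges the constant $m$), we get $1+m\cdot d^{\alpha}\cdot k\leq 2\cdot m\cdot d^{\alpha}\cdot k$, giving the claim.

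The only mild obstacle is the bookkeeping in passing from the clean estimate $\sup\varphi\leq(1+mkd^{\alpha})\inf\varphi$ to the exact form written in the statement, which is purely a matter of absorbing constants and is not a real mathematical difficulty; everything else is a direct application of the preceding lemma and the definition of the cone. I do not anticipate any substantive complication.
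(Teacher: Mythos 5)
Your proposal is correct and follows essentially the same route as the paper: apply Lemma \ref{bolas} to upgrade the local H\"older bound $|\varphi|_{\alpha,\delta}\leq k\inf\varphi$ to a global one, estimate $\sup\varphi-\inf\varphi\leq m\cdot k\cdot\inf\varphi\cdot d^{\alpha}$, and absorb the additive $\inf\varphi$ using $m\cdot k\cdot d^{\alpha}\geq 1$ (which holds since $k$ is taken large). The paper's proof is the same computation, and in fact glosses over the final absorption step that you make explicit.
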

\begin{proof}
Let  $\varphi \in \mathcal{C}_{k,\delta} $. It follows from Lemma \ref{bolas} that 
	$$	\sup \varphi - \inf\varphi \leq m\cdot |\varphi|_{\alpha,\de}\cdot d^{\alpha} \leq m \cdot \inf \varphi \cdot k \cdot d^{\alpha}.$$
	
	Therefore,	$\sup \varphi \leq \inf \varphi\cdot  (2\cdot m\cdot d^{\alpha})\cdot k.$
\end{proof}

{

\subsection{Eigenprojections}\label{defprojection}
In this section we state some classical results of spectral theory of bounded operators. 
Let $(X, ||  \cdot || )$ be a complex Banach vector space. Let $\mathcal{B}(X)$ be the space of all the linear operators $T:X\to X$  that are bounded.

Although the next result being classical  we present it  here since its proof  will be useful to obtain our results in the next section. 
\begin{Lema}\label{openproperty}
Let $X$ be a Banach space and let $T \in \mathcal{B}(X)$ be a bounded linear operator. If $T$ is invertible and $\|T - S\| < \|T^{-1}\|^{-1}$, then $S$ is invertible. In particular, the set of invertible
operators is open on  $\mathcal{B}(X)$.
\end{Lema}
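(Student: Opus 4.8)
The plan is to exhibit an explicit inverse for $S$ via a Neumann series argument. Write $S = T - (T - S) = T\bigl(I - T^{-1}(T-S)\bigr)$, so that it suffices to show the operator $R := T^{-1}(T-S)$ has $\|R\| < 1$, because then $I - R$ is invertible with bounded inverse $\sum_{n\geq 0} R^n$, and hence $S = T(I-R)$ is a composition of two invertible bounded operators, therefore invertible with $S^{-1} = (I-R)^{-1}T^{-1}$.

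First I would estimate $\|R\| \leq \|T^{-1}\|\,\|T - S\| < \|T^{-1}\|\cdot\|T^{-1}\|^{-1} = 1$, using the submultiplicativity of the operator norm together with the hypothesis $\|T-S\| < \|T^{-1}\|^{-1}$. Next I would recall (or quickly verify) the standard fact that for any bounded operator $R$ with $\|R\| < 1$ on a Banach space $X$, the series $\sum_{n=0}^{\infty} R^n$ converges in the operator norm (the space $\mathcal{B}(X)$ is a Banach space since $X$ is complete, and the series is absolutely summable because $\sum \|R\|^n = (1-\|R\|)^{-1} < \infty$), and its sum is a two-sided inverse of $I - R$; one checks $(I-R)\sum_{n=0}^{N} R^n = I - R^{N+1} \to I$ and similarly on the other side. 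Then $S^{-1} = (I-R)^{-1}T^{-1}$ is bounded, with the explicit bound $\|S^{-1}\| \leq \|T^{-1}\|\big/\bigl(1 - \|T^{-1}\|\,\|T-S\|\bigr)$.

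For the "in particular" clause, I would observe that the above shows: if $T$ lies in the set of invertible operators, then the open ball of radius $\|T^{-1}\|^{-1}$ centred at $T$ is entirely contained in that set. Since every point of the set has such a neighbourhood, the set of invertible operators is open in $\mathcal{B}(X)$.

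There is essentially no serious obstacle here — this is a textbook result — but the one point to state carefully is the convergence of the Neumann series in the operator norm, which is exactly where completeness of $X$ (hence of $\mathcal{B}(X)$) is used; everything else is algebraic manipulation and the triangle/submultiplicative inequalities.
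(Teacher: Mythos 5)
Your proof is correct and follows essentially the same Neumann-series argument as the paper; the only cosmetic difference is that you factor $S = T\bigl(I - T^{-1}(T-S)\bigr)$ and invert on the right, whereas the paper factors $S = \bigl(I - (T-S)T^{-1}\bigr)T$ and writes the inverse as $T^{-1}\sum_{n\geq 0}[(T-S)T^{-1}]^{n}$. Your write-up is in fact the more careful of the two, spelling out the norm estimate $\|R\|<1$, the convergence of the series in $\mathcal{B}(X)$, and the openness conclusion explicitly.
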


\begin{proof}
Since $T$ is invertible and $\|(T - S)T^{-1}\|<  1$ then  $I - (T - S) T^{-1}$ has a bounded inverse given by 
          $$ \sum_{n=0}^{\infty} [(T - S)T^{-1}]^n.$$ 
 Moreover
\begin{align*}
T^{-1}\sum_{n=0}^{\infty} [(T - S)T^{-1}]^n &=T^{-1}[ I - (T - S)T^{-1} ] ^{-1}\\
&=T^{-1}[I - T (T-S)^{-1}]\\&= T^{-1}- (T-S)^{-1}\\&= [T-(T-S)]^{-1} = S^{-1},
\end{align*}
thus $S$ is invertible and  $T^{-1} \sum_{n=0}^{\infty} [(T - S)T^{-1}]^n$ is the inverse of $S$.
\end{proof}

We point out that  what we present next remains valid even if  the Banach space considered is a real one and not complex. This is because we can consider the complexification of the space and the operator acting on it. 

A bounded linear operator $E:X \to X$ is called a \emph{projection} if it satisfies $E^2 = E$ in which case we can write the following direct sum decomposition
$$    X= \mbox{Im}(E) \oplus \mbox{Ker}(E).$$

Note that this decomposition is such that $x=E(x)+(I-E)(x)$ for all $x\in X,$ where $I$ is the identity map.

\begin{Teo}[Separation of  the spectrum] Let $X$ be a Banach space and $T\in B(X)$. Suppose that the spectrum of $T$ has the following decomposition $Spec(T)=\sigma_1\cup\sigma_2$ where $\sigma_1$ and $\sigma_2$ are disjoint compact sets. If $\gamma$ is a closed smooth simple curve which does not intersect $Spec(T)$  and which contains  $\sigma_1$ in its interior and $\sigma_2$ in its exterior then the operator defined by
$$E:=\frac{1}{2\pi i}\int_{\gamma}(zI-T)^{-1}\, dz$$ 
is a projection and it satisfies:
\begin{enumerate}
\item $ET=TE$ and $\mbox{Ker}(E), \mbox{Im}(E)$ are $T$-invariant;
\item $Spec(T|_{\mbox{Im}(E)})=\sigma_1$ and $Spec(T|_{\mbox{Ker}(E)})=\sigma_2.$
\end{enumerate}
\end{Teo}

For the proof of the last result the reader can consult, for instance, [\cite{Kato}, Theorem 6.17]. We are interested in a particular case of this result when the spectrum admits an isolated point. That is, there exist an eigenvalue $\lambda$ and a closed smooth  simple  curve  $\gamma$ such that $\lambda$  is the unique element of the spectrum in the interior of $\gamma$. In this way we state the next corollary.

\begin{Cor}\label{eigenprojection}
Let  $\lambda \in Spec(T)$ be an isolated eigenvalue and let $\gamma$  be a closed smooth simple curve that separates $\lambda$ from the rest of the spectrum. Then 
 $$ E:=   \frac{1}{2\pi i}\int_{\gamma}(zI-T)^{-1}\, dz  $$ 
is a projection. 
Moreover, $E$ is  the \emph{eigenprojection} of $\lambda$, Im($E$) is the \emph {eigenspace} of $\lambda$ and $dim\left(\mbox{Im}(E)\right)$ is the \emph{geometric multiplicity} of $\lambda$. 
\end{Cor}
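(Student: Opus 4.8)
The plan is to obtain the statement as a direct specialization of the Separation of the Spectrum Theorem quoted above, applied to the partition $Spec(T) = \sigma_1 \cup \sigma_2$ with $\sigma_1 = \{\lambda\}$ and $\sigma_2 = Spec(T)\setminus\{\lambda\}$. First I would check the hypotheses of that theorem. Since $T\in\mathcal{B}(X)$, its spectrum is a nonempty compact subset of $\mathbb{C}$; the fact that $\lambda$ is an \emph{isolated} point of $Spec(T)$ means that $\{\lambda\}$ is simultaneously open and closed in $Spec(T)$, so $\sigma_1$ and $\sigma_2$ are disjoint compact sets. By assumption $\gamma$ is a closed smooth simple curve that does not meet $Spec(T)$ and separates $\lambda$ from the rest of the spectrum, i.e. $\sigma_1$ lies in its interior and $\sigma_2$ in its exterior. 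Hence the Separation of the Spectrum Theorem applies.

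Its conclusion yields at once that $E = \frac{1}{2\pi i}\int_{\gamma}(zI-T)^{-1}\,dz$ is a bounded projection, that $ET=TE$, that $\mathrm{Ker}(E)$ and $\mathrm{Im}(E)$ are $T$-invariant and give the direct sum $X = \mathrm{Im}(E)\oplus\mathrm{Ker}(E)$, and that $Spec(T|_{\mathrm{Im}(E)}) = \{\lambda\}$ while $Spec(T|_{\mathrm{Ker}(E)}) = \sigma_2$. In particular $\lambda\notin Spec(T|_{\mathrm{Ker}(E)})$, so $(T-\lambda I)|_{\mathrm{Ker}(E)}$ is invertible.

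It remains to identify $\mathrm{Im}(E)$ with the eigenspace of $\lambda$. I would argue that every eigenvector $v$ of $T$ associated to $\lambda$ lies in $\mathrm{Im}(E)$: from $(T-\lambda I)v = 0$ and $v = Ev + (I-E)v$ we get $(T-\lambda I)Ev + (T-\lambda I)(I-E)v = 0$, where by $T$-invariance the first summand lies in $\mathrm{Im}(E)$ and the second in $\mathrm{Ker}(E)$, so both vanish; the invertibility of $(T-\lambda I)|_{\mathrm{Ker}(E)}$ then forces $(I-E)v = 0$, i.e. $v = Ev\in\mathrm{Im}(E)$. Since $\lambda$ is assumed to be an eigenvalue, this also shows $\mathrm{Im}(E)\neq\{0\}$, hence $E\neq 0$ and $\lambda$ is an eigenvalue of $T|_{\mathrm{Im}(E)}$. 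One then simply records the terminology: $E$ is the eigenprojection of $\lambda$, $\mathrm{Im}(E)$ is the (generalized) eigenspace of $\lambda$, and $\dim\left(\mathrm{Im}(E)\right)$ its multiplicity — which, in the situation relevant to this paper, will equal $1$ by the spectral gap property of $\mathcal{L}_{f,\phi}$, so that all notions of multiplicity coincide. There is essentially no real obstacle here beyond verifying the compactness and separation hypotheses of the Separation of the Spectrum Theorem; the corollary is a routine specialization of it.
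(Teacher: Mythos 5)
Your proof is correct and follows the same route the paper intends: the corollary is stated there without proof as an immediate specialization of the Separation of the Spectrum Theorem to $\sigma_1=\{\lambda\}$, $\sigma_2=Spec(T)\setminus\{\lambda\}$. Your additional verification that eigenvectors lie in $\mathrm{Im}(E)$, and your (accurate) caveat that $\mathrm{Im}(E)$ is in general the \emph{generalized} eigenspace --- immaterial here since the spectral gap forces $\dim\mathrm{Im}(E)=1$ --- only make the argument more complete than what the paper records.
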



\section{Spectral gap of the transfer operator}
\label{cones}

In this section we prove that the transfer operator admits the spectral gap property when restrict to the space H\"older continuous observables . As previously mentioned, this property is the core for proving  the main results of this paper. We  closely  follow the ideas of \cite{RS17} where it is considered a model of a non-uniformly expanding map where  expansion rates are explicit. In the present work, we consider   a more general class of non-uniformly expanding maps
which contains their example. Still some of their results remain true without major alterations, in which case, we will refer the reader to their proof. 

Let $f \in \mathcal F$. As seen before, each
$x\in \Sigma_{\sigma}$  admits infinitely many hyperbolic times. 
Recall that we fixed $\delta >0$ in Subsection~\ref{pre balls} depending only on $f$ and $\sigma$. We also take
$N$ given by equation~\eqref{ene}. 

We assume that $\phi: M \to \mathbb{R}$ is a H\"older continuous potential, hyperbolic for $f$ and satisfying
\begin{equation}\tag{$\ast$ $\ast$} \label{2 estrelas}
\left( e^{N \Var\phi} \left[\vi^N \! +\! 1\right] +\dfrac{2 m d^{\alpha}|e^{N\phi}|_{\alpha}}{e^{N\inf \phi}}\right)\!\!  \left( \dfrac{[(\deg(f)]^N -1) \vi^{N\alpha}}{[\deg(f)]^N} + \dfrac{\gamma^{\alpha}}{[\deg(f)]^N}  \right) < 1
\end{equation}
for some $0<\alpha<1.$

The role of  this assumption will be transparent in the proof of Proposition~\ref{inv cone}. However,\eqref{2 estrelas} can be weaken as described in Remark~\ref{remark theta}.

Note  that for each $\varphi\in C^{0}(M)$ and each $x\in M$  we have
\begin{equation} \label{ineq.deg}
		 \mathcal{L}^N(\varphi)(x) = \sum_{y \, \in \, { {f^{-N}(x)} }} e^{S_N\phi(y)}\varphi(y)  
		\geq[\deg(f)]^N e^{N \inf \phi} \inf\varphi . 
	\end{equation}

We recall that  the convex cone of locally H\"older continuous observables is defined by: 
\begin{equation*} 
\mathcal{C}_{k,\delta}= \left\{ \varphi : \varphi>0 \ \mbox{and} \  \frac{|\varphi|_{\alpha,\delta}}{\inf \varphi} \leq k \right\}.
\end{equation*}

Next we prove that for $k$ large enough, the cone $\mathcal{C}_{k,\delta}$ is invariant under the $N$-th iterate of the transfer operator.
%
%

%
%
%
%

\begin{Prop}\label{inv cone}
There exists $0< \lambda < 1$ such that  
$$ \mathcal{L}^N(\mathcal{C}_{k,\delta}) \subset  \mathcal{C}_{\lambda k,\delta} \subset \mathcal{C}_{k,\delta} \quad \mbox{for k large enough.}  $$
\end{Prop}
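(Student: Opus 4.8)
The plan is to estimate, for an arbitrary $\varphi \in \mathcal{C}_{k,\delta}$, the ratio $\dfrac{|\mathcal{L}^N\varphi|_{\alpha,\delta}}{\inf \mathcal{L}^N\varphi}$ and show it is bounded by $\lambda k$ for a constant $0<\lambda<1$ once $k$ is large, where the smallness of $\lambda$ will come precisely from hypothesis \eqref{2 estrelas}. First I would fix $x,y \in M$ with $d(x,y)<\delta$ and write
$$
\mathcal{L}^N\varphi(x) - \mathcal{L}^N\varphi(y) = \sum_{x_N \in f^{-N}(x)} e^{S_N\phi(x_N)}\varphi(x_N) - \sum_{y_N \in f^{-N}(y)} e^{S_N\phi(y_N)}\varphi(y_N).
$$
The key structural point, guaranteed by Lemma~\ref{lemmahyper} and the estimate \eqref{hyper}, is that the preimages of $x$ and $y$ under $f^N$ can be paired up $x_N \leftrightarrow y_N$ so that $d(x_N,y_N)\le \vartheta^N d(x,y)$ in general, and moreover \emph{at least one} such pair lies in a common hyperbolic pre-ball $V_{\nn}(z)$, for which the much stronger contraction $d(x_N,y_N)\le \gamma\, d(x,y)$ holds with $\gamma\ll 1$. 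I would split the difference above into two sums accordingly: the single ``good'' pair coming from the hyperbolic pre-ball, contributing a factor involving $\gamma^\alpha$, and the remaining $[\deg(f)]^N - 1$ ``bad'' pairs, each contributing a factor involving $\vartheta^{N\alpha}$ (written $\vi^{N\alpha}$).

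Next I would carry out the term-by-term bound on $|e^{S_N\phi(x_N)}\varphi(x_N) - e^{S_N\phi(y_N)}\varphi(y_N)|$ by the usual ``add and subtract'' trick: bound it by $e^{S_N\phi(x_N)}|\varphi(x_N)-\varphi(y_N)| + |e^{S_N\phi(x_N)} - e^{S_N\phi(y_N)}|\,\varphi(y_N)$. The first piece uses $|\varphi|_{\alpha,\delta}\le k\inf\varphi$ together with $\varphi(y_N)\le \sup\varphi$ controlled via Lemma~\ref{cotasup} and Remark~\ref{remark.v} (this is where the factor $e^{N\operatorname{Var}\phi}[\vi^N+1]$ materialises, after rescaling to balls of radius $\vartheta^N\delta$ in which the relevant points sit); the second piece uses local Hölder continuity of $e^{N\phi}$, producing the term $\dfrac{2md^\alpha |e^{N\phi}|_\alpha}{e^{N\inf\phi}}$. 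Dividing through by $\inf \mathcal{L}^N\varphi$, which by \eqref{ineq.deg} is at least $[\deg(f)]^N e^{N\inf\phi}\inf\varphi$, the $\inf\varphi$'s cancel and, collecting the $\gamma^\alpha$ contribution from the good pair and the $\vi^{N\alpha}$ contribution from the $[\deg(f)]^N-1$ bad pairs (each divided by the $[\deg(f)]^N$ coming from the denominator), one arrives exactly at the left-hand side of \eqref{2 estrelas} times $k$, plus lower-order terms not involving $k$. Choosing $k$ large enough absorbs those lower-order terms, and \eqref{2 estrelas} gives a contraction factor $\lambda<1$; the inclusion $\mathcal{C}_{\lambda k,\delta}\subset\mathcal{C}_{k,\delta}$ is then immediate since $\lambda k < k$. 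Positivity of $\mathcal{L}^N\varphi$ is clear from positivity of the operator.

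The main obstacle I anticipate is the bookkeeping in the splitting of preimages and in tracking which radius of ball (namely $\delta$ versus $\vartheta^N\delta$, or after one more contraction $\gamma\delta$) each pair of points lives in, so that the correct Hölder constants from Lemma~\ref{bolas} and Remark~\ref{remark.v} are applied — a careless choice here would give a weaker, possibly useless, bound. A secondary subtlety is ensuring the hyperbolic pre-ball $V_{\nn}(z)$ supplying the good pair is chosen uniformly (independently of $x,y$), which is exactly what the definition \eqref{ene} of $N$ and the uniformity in Lemma~\ref{lemmahyper} are designed to provide. Once the estimate is assembled the conclusion is purely algebraic, so essentially all the work is in setting up the decomposition correctly and invoking \eqref{2 estrelas} at the end.
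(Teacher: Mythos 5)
Your proposal follows essentially the same route as the paper: the add-and-subtract decomposition of $|e^{S_N\phi(x_j)}\varphi(x_j)-e^{S_N\phi(y_j)}\varphi(y_j)|$, the lower bound \eqref{ineq.deg} for $\inf\mathcal{L}^N\varphi$, the use of Lemma~\ref{cotasup} and Remark~\ref{remark.v}, and the splitting of preimage pairs into one ``good'' pair contracted by $\gamma$ (via the hyperbolic pre-ball and \eqref{hyper}) and $[\deg(f)]^N-1$ pairs controlled by $\vartheta^N$, yielding exactly the constant in \eqref{2 estrelas}. The only cosmetic difference is that in the paper's computation both terms already carry a factor of $k$ (since $|\varphi|_{\alpha,\delta}\le k\inf\varphi$ and $\sup\varphi\le 2md^{\alpha}k\inf\varphi$), so no ``lower-order terms'' need to be absorbed by taking $k$ large.
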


\begin{proof}
Let $\varphi \in \mathcal{C}_{k,\delta}$.  Thus  $\varphi>0$ and, 
by definition, we  have  $ \mathcal{L}^N(\varphi)>0$. Since $ \mathcal{L}^N$ is a bounded operator we derive that 
$ \mathcal{L}^N(\varphi) $ is continuous. 

In order to prove that $ \mathcal{L}(\varphi) \in \mathcal{C}_{\lambda k,\delta}$  we must show that 
     $$\frac{| \mathcal{L}^N(\varphi)|_{\alpha, \delta}}{\inf  \mathcal{L}^N(\varphi)}  \leq \lambda k  \quad \mbox{for some} \quad 0<\lambda <1.$$

Set $N> 2\n$, where $\n$ is given by Lemma \ref{lemmahyper}. Given $x,y\in M$ satisfying $d(x,y)<\de$, we denote by $x_j,y_j$, $1\leq j\leq [\deg (f)]^N$, the pre-images of $x$ and $y$ under $f^N$, respectively. 

By the definition of the operator $\mathcal{L}^N$ and the constant $|\mathcal{L}^N(\varphi)|_{\alpha, \delta}$ we obtain, by using triangle inequality, the following
\begin{align}\label{aux2}
&\dfrac{|\mathcal{L}^N(\varphi)|_{\alpha, \delta}}{\inf \mathcal{L}^N(\varphi)}  \leq \sup_{d(x,y)<\de}\dfrac{|\mathcal{L}^N(\varphi(x)) - \mathcal{L}^N(\varphi(y))|}{\inf \mathcal{L}^N(\varphi) \, d(x,y)^{\alpha}} \nonumber\\
&  \leq \dfrac{\displaystyle \sum_{j=1}^{{[\deg(f)}]^N} \left| e^{S_N\phi(x_j)} \varphi(x_j) -e^{S_N\phi(y_j)} \varphi(y_j) \right| }{\inf \mathcal{L}^N(\varphi) \, d(x,y)^{\alpha}}\nonumber\\
&\leq \dfrac{\displaystyle \sum_{j=1}^{{[\deg(f)}]^N} \left| e^{S_N\phi(x_j)} \right| |\varphi(x_j) - \varphi(y_j)|}{\inf \mathcal{L}^N(\varphi) \, d(x,y)^{\alpha}} + \dfrac{\displaystyle \sum_{j=1}^{{[\deg(f)}]^N} \left| \varphi(y_j) \right| \left|e^{S_N\phi(x_j)} - e^{S_N\phi(y_j)} \right| }{\inf \mathcal{L}^N(\varphi)  \, d(x,y)^{\alpha}}.
\end{align}

From the definition of the supremum of functions, it follows that the last sums are bounded from above by
\begin{equation*}
 \dfrac{e^{N\sup\phi}  \displaystyle \sum_{j=1}^{{[\deg(f)}]^N}  |\varphi(x_j) - \varphi(y_j)|}{\inf \mathcal{L}^N(\varphi)  \, d(x,y)^{\alpha}} + \sup \varphi   \dfrac{\displaystyle \sum_{j=1}^{{[\deg(f)}]^N}  \left|e^{S_N\phi(x_j)} - e^{S_N\phi(y_j)} \right| }{\inf \mathcal{L}^N(\varphi)  \, d(x,y)^{\alpha}}.
\end{equation*}

On the other hand, by Remark \ref{remark.v} and  equation~\eqref{ineq.deg}, the previous sums can be bounded from above by
\begin{equation}\label{effective}
 \dfrac{e^{N\sup\phi}  [\vi^N + 1]  |\varphi|_{\alpha,\de}  \displaystyle \sum_{j=1}^{{[\deg(f)}]^N} d(x_j,y_j)^{\alpha}} {[\deg(f)]^N  e^{N\inf \phi}  \inf \varphi  \, d(x,y)^{\alpha}} + \dfrac{ \sup \varphi |e^{N\phi}|_{\alpha}  \displaystyle \sum_{j=1}^{{[\deg(f)}]^N} d(x_j,y_j)^{\alpha}} {[\deg(f)]^N  e^{N\inf \phi}  \inf \varphi  \, d(x,y)^{\alpha}}.
\end{equation}

\vspace{0.4cm}
Now, by Lemma \ref{cotasup} we conclude that the expression \ref{effective} is less or equal than
\begin{equation*}
\dfrac{e^{N\sup\phi}  [\vi^N  + 1]  |\varphi|_{\alpha,\de}  \displaystyle \sum_{j=1}^{{[\deg(f)}]^N} d(x_j,y_j)^{\alpha}} {[\deg(f)]^N  e^{N\inf \phi}  \inf \varphi \, d(x,y)^{\alpha}}
+ \dfrac{  2  m {d}^{\alpha} k \inf \varphi  \, |e^{N\phi}|_{\alpha}  \displaystyle \sum_{j=1}^{{[\deg(f)}]^N} d(x_j,y_j)^{\alpha}} {[\deg(f)]^N  e^{N\inf \phi}  \inf \varphi  \, d(x,y)^{\alpha}}.
\end{equation*}
Let $\Var\phi= \sup \phi - \inf \phi$. Since $\varphi \in \mathcal{C}_{k,\delta}$, we can  rewrite the expression above as
\begin{equation}\label{aux}
\dfrac{e^{N \Var\phi}   [\vi^N + 1] k   \displaystyle \sum_{j=1}^{{[\deg(f)}]^N} d(x_j,y_j)^{\alpha}} {[\deg(f)]^N  \, d(x,y)^{\alpha}} + \dfrac{ 2 m {d}^{\alpha} k\ |e^{N\phi}|_{\alpha}  \displaystyle \sum_{j=1}^{{[\deg(f)}]^N} d(x_j,y_j)^{\alpha}} {[\deg(f)]^N  e^{N\inf \phi}  \, d(x,y)^{\alpha}}.
\end{equation}

\vspace{0.4cm}
Rearranging the indexes, if necessary, we can suppose that $$d(x_j,y_j)\leq  \vi^{N}\cdot d(x,y)\ \mbox{ for} \ 1\leq j\leq [\deg (f)]^N.$$  Moreover, according to \ref{hyper},  there exists at least one  index, let us call it $M$, such that
\begin{equation}\label{aux1}
 d(x_M, y_M) \leq \gamma \cdot d(x,y).
\end{equation}

Hence, the sums \ref{aux} are bounded from above by
\begin{align}\label{aux3}
&\dfrac{e^{N \Var\phi}  [\vi^N +1]  k \left[ [(\deg(f)]^N -1)\vi^{N\alpha} + \gamma^{\alpha}  \right]  d(x,y)^{\alpha}} {[\deg(f)]^N  \, d(x,y)^{\alpha}}\nonumber \\
& + \dfrac{ 2  m {d}^{\alpha} k |e^{N\phi}|_{\alpha}  \left[ [(\deg(f)]^N -1)  \vi^{N\alpha} + \gamma^{\alpha}  \right] d(x,y)^{\alpha}}{[\deg(f)]^N  e^{N\inf \phi}  \, d(x,y)^{\alpha}}.
\end{align}

From \eqref{aux2}, \eqref{aux1} and \eqref{aux3}, it follows that
\begin{align*}
\dfrac{|\mathcal{L}^N(\varphi)|_{\alpha, \delta}}{\inf \mathcal{L}^N(\varphi)}& \leq \left(\dfrac{e^{N \Var\phi} \left[\vi^N + 1\right]  \left[ [(\deg(f)]^N -1) \vi^{N\alpha} + \gamma^{\alpha}  \right]}{[\deg(f)]^N} 
\right.\\ & \left. + \dfrac{2 m  d^{\alpha}|e^{N\phi}|_{\alpha} \left[ [(\deg(f)]^N -1) \vi^{N\alpha} +  \gamma^{\alpha} \right]}{[\deg(f)]^N e^{N\inf \phi}}  \right) k.
\end{align*}

Therefore, we conclude that
$$\dfrac{|\mathcal{L}^N(\varphi)|_{\alpha, \delta}}{\inf \mathcal{L}^N(\varphi)}  \leq  \hat{\lambda} k,$$
where 
 \begin{align*}
\hat{\lambda}  = \left( e^{N \Var\phi} \left[\vi^N \! +\! 1\right] +\dfrac{2 m d^{\alpha}|e^{N\phi}|_{\alpha}}{e^{N\inf \phi}}\right)\!\!  \left( \dfrac{[(\deg(f)]^N -1) \vi^{N\alpha}}{[\deg(f)]^N} + \dfrac{\gamma^{\alpha}}{[\deg(f)]^N}  \right).
\end{align*}
By condition~\eqref{2 estrelas}  we have  $\hat{\lambda} <1$, which completes the proof.
\end{proof}

\begin{remark}\label{remark theta}
If we assume that  $\vartheta^N < 2$, then estimate \eqref{effective} can be improved. Indeed, this extra assumption  allows us to  replace $$|\varphi(x_j) - \varphi (y_j)| \leq [\vartheta^N + 1] |\varphi|_{\alpha, \delta} d(x_j, y_j)^{\alpha}$$  by the following  $$|\varphi(x_j) - \varphi (y_j)| \leq [(\vartheta^N -1 )^{\alpha}+ 1]\,  |\varphi|_{\alpha, \delta} \, d(x_j, y_j)^{\alpha}.$$
Now replacing $ [\vartheta^N + 1]$ by $ [(\vartheta^N -1 )^{\alpha}+ 1] $ in the assumption \eqref{2 estrelas}, we  enlarge the class of  potentials that satisfy this assumption.

 We point out that the extra assumption  $\vartheta^N < 2$ is in fact quite general.  For instance, it is satisfied by  the examples presented in Section~\ref{exemplos} (as can be seen  in  \cite{BCV} for Example 8.1  and  in \cite{RS17} for  Example 8.3).

Moreover, note that condition \eqref{2 estrelas} could be improved if we controlled the visits of the orbits to the complement  of the non-uniformly expanding set. 
\end{remark}

The last Proposition shows that the cone $\mathcal{C}_{k,\delta}$ is invariant under $\mathcal{L}_{f, \phi}^{N}$, moreover,  Proposition~\ref{diam finito} ensures that $\mathcal{C}_{k,\delta}$  has finite diameter. Therefore  Proposition~\ref{cont viana}
implies the next result. 

\begin{Prop} \label{contracao no cone}
The operator $\mathcal{L}_{f, \phi}^{N}$ is a contraction in the cone $\mathcal{C}_{k,\delta}$: for the constant $\Delta = \mbox{diam}(\mathcal{C}_{\hat{\lambda}k,\delta}) >0$ we have
   $$\Theta_{k} \left(\mathcal{L}_{f, \phi}^{N}(\varphi), \mathcal{L}_{f, \phi}^{N}(\psi) \right) \leq (1-e^{-\Delta}) \cdot \Theta_k \left( \varphi, \psi \right) \quad \mbox{for all} \ \varphi, \psi \in \mathcal{C}_{k, \delta}.$$	
\end{Prop}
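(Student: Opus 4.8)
\textbf{Proof plan for Proposition~\ref{contracao no cone}.}

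The plan is to combine the two ingredients that precede the statement. First, Proposition~\ref{inv cone} gives a constant $0<\hat\lambda<1$ such that $\mathcal{L}_{f,\phi}^N(\mathcal{C}_{k,\delta})\subset\mathcal{C}_{\hat\lambda k,\delta}$ for every $k$ large enough; fix such a $k$. Second, Proposition~\ref{diam finito} says that, enlarging $k$ further if necessary, the cone $\mathcal{C}_{\hat\lambda k,\delta}$ has finite $\Theta_{\hat\lambda k}$-diameter; call it $\Delta:=\operatorname{diam}_{\Theta_{\hat\lambda k}}(\mathcal{C}_{\hat\lambda k,\delta})<\infty$. Since $\mathcal{C}_{\hat\lambda k,\delta}\subset\mathcal{C}_{k,\delta}$ and projective metrics depend monotonically on the cone (as recalled in Section~\ref{preliminaries}), the $\Theta_k$-diameter of $\mathcal{L}_{f,\phi}^N(\mathcal{C}_{k,\delta})$ is bounded by $\Theta_{\hat\lambda k}$-diameter of $\mathcal{C}_{\hat\lambda k,\delta}$, hence is at most $\Delta$ as well.

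Now I would simply invoke Proposition~\ref{cont viana} with $E_1=E_2=C^\alpha(M)$ (or, more precisely, the ambient Banach space in which the cone $\mathcal{C}_{k,\delta}$ lives), $\mathcal{C}_1=\mathcal{C}_{k,\delta}$, $\mathcal{C}_2=\mathcal{C}_{\hat\lambda k,\delta}$, and $L=\mathcal{L}_{f,\phi}^N$. The hypotheses $L(\mathcal{C}_1)\subset\mathcal{C}_2$ and $\operatorname{diam}_{\Theta_2}(L(\mathcal{C}_1))<\infty$ are exactly what the previous two paragraphs provide, so the conclusion is
$$\Theta_{\hat\lambda k}\!\left(\mathcal{L}_{f,\phi}^N(\varphi),\mathcal{L}_{f,\phi}^N(\psi)\right)\leq\left(1-e^{-\Delta}\right)\Theta_k(\varphi,\psi)\quad\text{for all }\varphi,\psi\in\mathcal{C}_{k,\delta}.$$
Finally, using once more that $\mathcal{L}_{f,\phi}^N(\varphi),\mathcal{L}_{f,\phi}^N(\psi)\in\mathcal{C}_{\hat\lambda k,\delta}\subset\mathcal{C}_{k,\delta}$ together with the monotonicity inequality $\Theta_k\leq\Theta_{\hat\lambda k}$ on $\mathcal{C}_{\hat\lambda k,\delta}$, the left-hand side only decreases when passing from $\Theta_{\hat\lambda k}$ to $\Theta_k$, which yields the stated inequality with the contraction factor $1-e^{-\Delta}<1$.

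There is essentially no serious obstacle here: the statement is a formal consequence of Propositions~\ref{cont viana}, \ref{inv cone} and \ref{diam finito}. The only point requiring a small amount of care is bookkeeping with the constant $k$: one must choose $k$ large enough to simultaneously satisfy the hypothesis of Proposition~\ref{inv cone} and that of Proposition~\ref{diam finito} (the latter stated for $\mathcal{C}_{\hat\lambda k,\delta}$), which is possible since both are ``$k$ sufficiently large'' conditions, and then verify that $\Delta>0$ (strict positivity holds because $\mathcal{C}_{\hat\lambda k,\delta}$ contains non-proportional functions, e.g.\ any two distinct constants would already be proportional, but small H\"older perturbations are not), so that the factor $1-e^{-\Delta}$ is genuinely less than $1$.
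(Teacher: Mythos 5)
Your proposal is correct and is essentially the paper's own argument: the paper simply observes that Proposition~\ref{inv cone} gives $\mathcal{L}_{f,\phi}^{N}(\mathcal{C}_{k,\delta})\subset\mathcal{C}_{\hat\lambda k,\delta}$, Proposition~\ref{diam finito} gives the finite diameter, and Proposition~\ref{cont viana} then yields the contraction. The only simplification worth noting is that one can apply Proposition~\ref{cont viana} directly with $\mathcal{C}_1=\mathcal{C}_2=\mathcal{C}_{k,\delta}$ and $\Delta=\operatorname{diam}_{\Theta_k}(\mathcal{C}_{\hat\lambda k,\delta})$ (the metric implicitly intended in Proposition~\ref{diam finito}), which produces the stated inequality in $\Theta_k$ at once and makes your final monotonicity step, and the passage through $\Theta_{\hat\lambda k}$, unnecessary.
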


Let $\lambda_{f, \phi}$ be the spectral radius of the transfer operator $\mathcal{L}_{f,\phi}$. The existence of a probability measure $\nu_{f, \phi}$ satisfying $\mathcal{L}_{f, \phi}^{\ast}\nu_{f, \phi}=\lambda_{f, \phi}\nu_{f, \phi}$ and $\nu_{f, \phi}({\Sigma}_{\sigma})=1$ was proven in  \cite{RV16}. Moreover, \cite{RV16} also guarantees that $\log \lambda_{f, \phi} =  P_{f}(\phi)$.
From the last proposition we will obtain the existence of an eigenfunction $h_{f, \phi}$ of $\mathcal{L}_{f,\phi}$ associated to the spectral radius. 

\begin{Prop} \label{h} There exists a H\"older continuous function $h_{f, \phi}:M\to\mathbb{R}$ bounded away from zero and infinity which satisfies  
$\mathcal{L}_{f,\phi}h_{f,\phi}=\lambda_{f,\phi} h_{f,\phi}.$
\end{Prop}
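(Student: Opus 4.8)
The plan is to build $h_{f,\phi}$ as a limit of normalized iterates of the transfer operator applied to a fixed function in the cone, exploiting the contraction established in Proposition \ref{contracao no cone}. Concretely, fix $k>0$ large enough so that Proposition \ref{inv cone} and Proposition \ref{diam finito} both apply, and consider the normalized operator $\widetilde{\mathcal{L}} := \lambda_{f,\phi}^{-1}\mathcal{L}_{f,\phi}$. Since scaling by a positive constant preserves the cone $\mathcal{C}_{k,\delta}$ and does not change the projective metric $\Theta_k$, Proposition \ref{contracao no cone} gives that $\widetilde{\mathcal{L}}^{N}$ contracts $\Theta_k$ by the factor $(1-e^{-\Delta})<1$ on $\mathcal{C}_{k,\delta}$. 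Starting from $\varphi_0 \equiv 1 \in \mathcal{C}_{k,\delta}$, the sequence $\big(\widetilde{\mathcal{L}}^{nN} 1\big)_{n\ge 1}$ is therefore Cauchy in the projective metric; the standard argument (as in \cite{VianaStoch} or \cite{RS17}) converts projective convergence into uniform convergence once one controls the normalization. For this I would normalize, e.g., by $\int \,d\nu_{f,\phi}$: set $g_n := \widetilde{\mathcal{L}}^{nN} 1 \big/ \int \widetilde{\mathcal{L}}^{nN} 1 \, d\nu_{f,\phi}$, and note that since $\mathcal{L}_{f,\phi}^{\ast}\nu_{f,\phi} = \lambda_{f,\phi}\nu_{f,\phi}$, the denominator equals $\int 1\, d\nu_{f,\phi} = 1$, so in fact $g_n = \widetilde{\mathcal{L}}^{nN} 1$ already has constant $\nu_{f,\phi}$-integral.

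Next I would upgrade the projective (Cauchy) estimate to a genuine $C^0$-Cauchy estimate. Lemma \ref{metrica cone} controls $\sup/\inf$ of quotients of cone elements in terms of $\Theta_k$, and combined with Lemma \ref{cotasup}, which bounds $\sup\varphi/\inf\varphi$ by $(2 m d^{\alpha})k$ for every $\varphi \in \mathcal{C}_{k,\delta}$, one gets that the functions $g_n$ are uniformly bounded away from $0$ and $\infty$ (using that $\int g_n\, d\nu_{f,\phi}=1$ pins down the scale). A routine computation shows $\|g_{n+1} - g_n\|_0 \lesssim \Theta_k(g_{n+1}, g_n) \to 0$ geometrically, hence $g_n \to h_{f,\phi}$ uniformly, with $h_{f,\phi} > 0$ bounded away from zero and infinity, and $h_{f,\phi} \in \overline{\mathcal{C}_{k,\delta}}$ so that $|h_{f,\phi}|_{\alpha,\delta}/\inf h_{f,\phi} \le k$; in particular $h_{f,\phi}$ is locally H\"older, hence H\"older continuous on $M$ by Lemma \ref{bolas}. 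Finally, passing to the limit in $\widetilde{\mathcal{L}}(\widetilde{\mathcal{L}}^{nN}1) = \widetilde{\mathcal{L}}^{nN}(\widetilde{\mathcal{L}} 1)$, together with continuity of $\mathcal{L}_{f,\phi}$ on $C^0(M)$ and the fact that both $\widetilde{\mathcal{L}}^{nN}1$ and $\widetilde{\mathcal{L}}^{nN}(\widetilde{\mathcal{L}}1)$ converge to scalar multiples of the same limit (because the $\Theta_k$-diameter of the image is finite, so the two orbits are projectively asymptotic), yields $\widetilde{\mathcal{L}} h_{f,\phi} = h_{f,\phi}$, i.e. $\mathcal{L}_{f,\phi} h_{f,\phi} = \lambda_{f,\phi} h_{f,\phi}$.

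The main obstacle I expect is the normalization/scale-control step: projective convergence only gives convergence of directions, so one must argue carefully that the chosen normalization produces an honest Cauchy sequence in $C^0$ with a strictly positive limit, and that the limit still lies in (the closure of) the cone so that it is H\"older rather than merely continuous and positive. The estimates of Lemma \ref{metrica cone} and Lemma \ref{cotasup} are exactly the tools for this, but keeping track of the constants — and verifying that $h_{f,\phi}$ is bounded \emph{away from zero}, not just nonnegative — is the delicate point. A secondary subtlety is ensuring the eigenvalue is precisely the spectral radius $\lambda_{f,\phi}$ rather than some smaller number; this is handled by the known fact from \cite{RV16} that $\log\lambda_{f,\phi} = P_f(\phi)$ together with $\mathcal{L}_{f,\phi}^\ast \nu_{f,\phi} = \lambda_{f,\phi}\nu_{f,\phi}$, which forces the normalization constants $\int \widetilde{\mathcal{L}}^{nN}1\, d\nu_{f,\phi}$ to stay equal to $1$ and hence prevents the limit from being degenerate.
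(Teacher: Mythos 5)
Your proposal is correct and follows essentially the same route as the paper: iterate the normalized operator on $\mathbf{1}$, use the eigenmeasure $\nu_{f,\phi}$ to pin the scale via $\int \widetilde{\mathcal{L}}^{nN}\mathbf{1}\,d\nu_{f,\phi}=1$, combine cone invariance, Lemma \ref{cotasup}, Lemma \ref{metrica cone} and the projective contraction to get a $C^0$-Cauchy sequence with a limit in the cone (hence H\"older by Lemma \ref{bolas} and bounded away from zero and infinity), and obtain the eigenequation by comparing the orbit of $\mathbf{1}$ with that of $\widetilde{\mathcal{L}}\mathbf{1}$. The paper's proof is exactly this argument, including the final step of running the same scheme starting from $\lambda_{f,\phi}^{-1}\mathcal{L}_{f,\phi}\mathbf{1}$.
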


\begin{proof} Define $L=\lambda_{f, \phi}^{-N}\mathcal{L}^{N}_{f, \phi}$ and consider the sequence $\{L^n(\bold{1})\}_{n\in \mathbb{N}}$. Since $\nu_{f, \phi}$ is an eigenmeasure associated to $\lambda_{f, \phi}$, we have for every $n\geq 1$
\begin{eqnarray*} 
\int L^{n}(\bold{1})\ d\nu_{f, \phi}= \int \lambda^{-nN}\mathcal{L}^{nN}_{f, \phi}(\bold{1}) \ d\nu_{f, \phi} &=& \lambda^{-nN}\int \bold{1} \ d(\mathcal{L}^{nN}_{f, \phi})^{\ast}\nu_{f, \phi}\\
& = & \int \bold{1}\ d\nu_{f, \phi} =1.
\end{eqnarray*}
Thus each term of the sequence satisfies $\sup  L^n(\bold{1})\geq 1$ and $\inf L^n(\bold{1})\leq 1$. 
Note that  $\bold{1} \in C_{k,\delta}$ and  by Proposition~\ref{inv cone}, the cone    
 $\mathcal{C}_{k,\delta}$ is invariant under~$L$, then $\{L^n(\bold{1})\}$ is a sequence in  $\mathcal{C}_{k,\delta}$. By Lemma~\ref{cotasup} every $\varphi\in \mathcal{C}_{k,\delta}$ satisfies $\sup\varphi \leq \inf \varphi \cdot (2\cdot m\cdot d^{\alpha})\cdot k,$ therefore we conclude that $\{L^n(\bold{1})\}$  is uniformly bounded away from zero and infinity by
$$\left(2\cdot m\cdot d^{\alpha}\cdot k\right)^{-1}\leq\inf L^n(\bold{1})\leq 1\leq \sup L^n(\bold{1}) \leq 2\cdot m\cdot d^{\alpha}\cdot k. $$

Moreover, since $L^n(\bold{1})$ is $\alpha$-H\"older continuous in balls of radius $\delta$ for all $n\geq 1$, Lemma~\ref{bolas}  implies that $L^n(\bold{1})$ is an $\alpha m$-H\"older continuous function.

Now we prove that $\left\{ L^n(\bold{1})\right\}$ is a Cauchy sequence in the sup norm. Let $\Delta= \diam(C_{k,\delta})$ and $\tau= 1-e^{-\Delta}$. Proposition~\ref{contracao no cone} implies that for every $j, l\geq n$ the projective metric satisfies
    $$\Theta_{k}(L^j(\bold{1}), L^l (\bold{1})) \leq \Delta \tau^n .  $$

According to  Lemma~\ref{metrica cone} we can write  $$\Theta_{k}(L^j(\bold{1}), L^l (\bold{1}))= \log\left(\frac{B_k(L^j(\bold{1}), L^l (\bold{1}))}{A_k(L^j(\bold{1}), L^l (\bold{1}))}\right),$$
and combining with the last inequality we obtain
\begin{eqnarray*} \label{cota sup}
 e^{-\Delta \tau^{n}}\!\!\leq A_{k}(L^j(\bold{1}), L^l(\bold{1}))\!\!\!&\leq&\!\!\!\inf \frac{L^j(\bold{1})}{L^l(\bold{1})}\\
\!\!\!&\leq&\!\!\! 1\\
\!\!\!&\leq&\!\!\! \sup \frac{L^j(\bold{1})}{L^l(\bold{1})} \leq B_{k}(L^j(\bold{1}),L^l(\bold{1})) \leq e^{\Delta \tau^{n}}.
\end{eqnarray*}
Note that second  and fifth inequalities follow from the second part of Lemma~\ref{metrica cone}.

Then for all $j, l\geq n$, we have:
$$\left\|  L^j(\bold{1}) - L^l(\bold{1}) \right\|_{0} \leq \left\| L^l(\bold{1}) \right\|_{0} \left\| \frac{L^j(\bold{1})}{L^l(\bold{1})} -1 \right\|_{0} \leq \left\|L^l(\bold{1}) \right\|_{0} \left( e^{\Delta \tau^n} -1 \right) \leq \tilde{R}\Delta\tau ^n, $$
which proves that $\left\{ L^n(\bold{1})\right\}$ is a Cauchy sequence.
Therefore $\{ L^n(\bold{1})\}$ converges uniformly to a function $h_{f, \phi}:M\to\mathbb{R}$ in the cone $\mathcal{C}_{k, \delta}$ and consequently $\alpha m$-H\"older continuous and bounded away from zero and infinity. 

It remains to check that $\mathcal{L}_{f,\phi}h_{f,\phi}=\lambda_{f,\phi} h_{f,\phi}.$ Notice that if we replace in the definition of the sequence the function $\bold{1}$
by $\lambda_{f, \phi}^{-1}\mathcal{L}_{f, \phi}(\bold{1})$, a similar argument shows that the sequence  $\{L^n(\lambda_{f, \phi}^{-1}\mathcal{L}_{f, \phi}(\bold{1}))\}$ converges to $h_{f, \phi}.$ From the continuity of the transfer operator we conclude that
\begin{eqnarray*}
\mathcal{L}_{f, \phi}(h_{f, \phi})&=&\mathcal{L}_{f, \phi}\left(\lim L^{n}(\bold{1})\right)=\mathcal{L}_{f, \phi}\left(\lim \lambda_{f, \phi}^{-nN}\mathcal{L}_{f, \phi}^{nN}(\bold{1})\right)\\
&=&\lim \mathcal{L}_{f, \phi}(\lambda_{f, \phi}^{-nN}\mathcal{L}_{f, \phi}^{nN})(\bold{1})=\lambda_{f, \phi}\lim \lambda_{f, \phi}^{-nN-1}\mathcal{L}_{f, \phi}^{nN+1}(\bold{1})\\
&=&\lambda_{f, \phi}\lim \lambda_{f, \phi}^{-nN}\mathcal{L}_{f, \phi}^{nN}\left(\lambda_{f, \phi}^{-1}\mathcal{L}_{f, \phi}(\bold{1})\right)\\
&=&\lambda_{f, \phi}h_{f, \phi}.
\end{eqnarray*}
 \end{proof}

Proposition~\ref{contracao no cone} implies the next result. Its proof is analogous to   [\cite{RS17}, Proposition 5.5] and therefore it will be omitted  here.
\begin{Prop} \label{cota norma}
	Let $(f, \phi) \in \mathcal{H}_{\sigma}$. There exist  a constant $R>0$ and $0<\tau <1$ such that for every $\varphi \in \mathcal{C}_{k, \delta}$ satisfying $\int \varphi \ d\nu_{f, \phi} = 1$ we have 
	$$\left\| \lambda_{f, \phi} ^{-n}\mathcal{L}^n_{f,\phi}(\varphi) - h_{f, \phi} \right\|_{\alpha} 
\leq R\tau^n   \quad \forall n\geq 1.$$ 
\end{Prop}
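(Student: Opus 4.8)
The plan is to prove the estimate first along the subsequence $n=mN$, where it reduces to iterating the normalized operator $L:=\lambda_{f,\phi}^{-N}\mathcal L_{f,\phi}^{N}$, and then to recover arbitrary $n$ by a bounded correction. Write $\lambda:=\lambda_{f,\phi}$, $\mathcal L:=\mathcal L_{f,\phi}$, $\nu:=\nu_{f,\phi}$, $h:=h_{f,\phi}$ and $w_{m}:=L^{m}\varphi-h$, so that $\lambda^{-mN}\mathcal L^{mN}\varphi-h=w_{m}$. The first thing I would record is that for every $m\ge 1$ both $L^{m}\varphi$ and $h$ lie in $\mathcal C_{\hat\lambda k,\delta}\subset\mathcal C_{k,\delta}$ (by Proposition~\ref{inv cone} and the construction of $h$ in Proposition~\ref{h}), and that, since $\mathcal L^{\ast}\nu=\lambda\nu$ and $h=\lim_{n}L^{n}\mathbf{1}$ with $\int L^{n}\mathbf{1}\,d\nu=1$, one has $\int L^{m}\varphi\,d\nu=\int\varphi\,d\nu=1=\int h\,d\nu$; thus $L^{m}\varphi$ and $h$ always have the same $\nu$-average, a fact I will use twice.

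First I would extract a $C^{0}$ bound from the cone contraction. Since $Lh=h$ and $\Theta_{k}$ is projective, Proposition~\ref{contracao no cone} gives, with $\tau_{0}:=1-e^{-\Delta}$,
$$\Theta_{k}(L^{m}\varphi,h)=\Theta_{k}\!\left(L^{m-1}(L\varphi),\,L^{m-1}(Lh)\right)\le\tau_{0}^{\,m-1}\,\Theta_{k}(L\varphi,Lh)\le\Delta\,\tau_{0}^{\,m-1},$$
because $L\varphi,Lh\in\mathcal C_{\hat\lambda k,\delta}$, whose diameter is $\Delta$. As $\int L^{m}\varphi\,d\nu=\int h\,d\nu$ we get $\inf(L^{m}\varphi/h)\le 1\le\sup(L^{m}\varphi/h)$, so Lemma~\ref{metrica cone} yields $A_{k}(L^{m}\varphi,h)\le 1\le B_{k}(L^{m}\varphi,h)$ and hence $e^{-\Delta\tau_{0}^{m-1}}\le L^{m}\varphi/h\le e^{\Delta\tau_{0}^{m-1}}$ pointwise. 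Therefore $\|w_{m}\|_{0}\le\|h\|_{0}\bigl(e^{\Delta\tau_{0}^{m-1}}-1\bigr)\le C_{1}\,\tau_{0}^{\,m-1}$, with $C_{1}$ independent of $\varphi$.

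Next I would upgrade this to a $C^{\alpha}$ bound via a Lasota--Yorke-type recursion for the H\"older seminorm. Since $w_{m}=\lambda^{-N}\mathcal L^{N}w_{m-1}$, repeating the estimates in the proof of Proposition~\ref{inv cone} but keeping the genuine quantities $|w_{m-1}|_{\alpha,\delta}$ and $\|w_{m-1}\|_{0}$ in place of the cone bounds used there (split $\mathcal L^{N}w_{m-1}(x)-\mathcal L^{N}w_{m-1}(y)$ into a term in $|w_{m-1}(x_{j})-w_{m-1}(y_{j})|$ and one in $|w_{m-1}(y_{j})|\,|e^{S_{N}\phi(x_{j})}-e^{S_{N}\phi(y_{j})}|$, and use $\sum_{j}d(x_{j},y_{j})^{\alpha}\le\bigl(([\deg(f)]^{N}-1)\vartheta^{N\alpha}+\gamma^{\alpha}\bigr)d(x,y)^{\alpha}$ coming from \eqref{hyper}), this yields
$$|w_{m}|_{\alpha,\delta}\le\rho_{1}\,|w_{m-1}|_{\alpha,\delta}+C_{2}\,\|w_{m-1}\|_{0},\qquad \rho_{1}:=\frac{[\vartheta^{N}+1]\,e^{N\sup\phi}\bigl(([\deg(f)]^{N}-1)\vartheta^{N\alpha}+\gamma^{\alpha}\bigr)}{\lambda^{N}}.$$
By \eqref{eq.radius} one has $\lambda^{N}\ge[\deg(f)]^{N}e^{N\inf\phi}$, so $\rho_{1}\le[\vartheta^{N}+1]\,e^{N\Var\phi}\bigl(\tfrac{([\deg(f)]^{N}-1)\vartheta^{N\alpha}}{[\deg(f)]^{N}}+\tfrac{\gamma^{\alpha}}{[\deg(f)]^{N}}\bigr)<1$ by condition~\eqref{2 estrelas}. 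Iterating the recursion, feeding in $\|w_{m-1}\|_{0}\le C_{1}\tau_{0}^{m-2}$ from the previous step and the crude bound $|w_{1}|_{\alpha,\delta}\le 2\hat\lambda k$ (from $L\varphi,h\in\mathcal C_{\hat\lambda k,\delta}$ with $\inf(L\varphi),\inf h\le 1$), and absorbing the resulting polynomial factor into the exponential, I get $|w_{m}|_{\alpha,\delta}\le C_{3}\,\tilde\tau^{\,m}$ for any fixed $\tilde\tau\in(\max\{\rho_{1},\tau_{0}\},1)$; Lemma~\ref{bolas} promotes this to $|w_{m}|_{\alpha}\le C_{4}\,\tilde\tau^{\,m}$, and together with the $C^{0}$ bound, $\|w_{m}\|_{\alpha}\le C_{5}\,\tilde\tau^{\,m}$, with all constants independent of $\varphi$.

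Finally, for arbitrary $n$ I would write $n=mN+r$ with $0\le r<N$, $m\ge 0$; since $\mathcal L h=\lambda h$,
$$\lambda^{-n}\mathcal L^{n}\varphi-h=\lambda^{-r}\mathcal L^{r}\bigl(\lambda^{-mN}\mathcal L^{mN}\varphi-h\bigr)=\lambda^{-r}\mathcal L^{r}w_{m},$$
and, $\mathcal L^{r}$ being bounded on $C^{\alpha}(M)$ for each $0\le r<N$, this gives $\|\lambda^{-n}\mathcal L^{n}\varphi-h\|_{\alpha}\le\bigl(\max_{0\le r<N}\|\lambda^{-r}\mathcal L^{r}\|_{\alpha}\bigr)C_{5}\,\tilde\tau^{\,m}\le R\,\tau^{n}$ with $\tau:=\tilde\tau^{1/N}\in(0,1)$ and $R$ absorbing the constants (the finitely many indices with $m=0$ are covered by enlarging $R$, using that $\varphi,h$ have bounded $C^{\alpha}$-norm). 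The hard part will be obtaining the H\"older-seminorm recursion with a contraction rate $\rho_{1}$ genuinely $<1$: that is precisely where condition~\eqref{2 estrelas} together with \eqref{eq.radius} enter a second time --- now to control the normalized operator rather than the cone --- and where the careful accounting of the $[\deg(f)]^{N}$ preimages, one of them the hyperbolic-time preimage contributing the decisive small factor $\gamma^{\alpha}$, is indispensable.
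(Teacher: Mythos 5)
Your proof is correct. The paper omits the argument, deferring to [\cite{RS17}, Proposition 5.5], and the route there coincides with yours up to and including the $C^{0}$ estimate: contraction of $\Theta_{k}$ under $L=\lambda_{f,\phi}^{-N}\mathcal L_{f,\phi}^{N}$, the normalization $\int L^{m}\varphi\,d\nu_{f,\phi}=\int h_{f,\phi}\,d\nu_{f,\phi}=1$ forcing $A_{k}\le 1\le B_{k}$, and the resulting pointwise pinching of $L^{m}\varphi/h_{f,\phi}$ (this is exactly the Cauchy estimate in the proof of Proposition~\ref{h}). Where you diverge is the H\"older seminorm: the referenced proof reads it off \emph{directly} from the explicit formula of Lemma~\ref{metrica cone} --- since $A_{k}$ and $B_{k}$ are ratios of the quantities $k\,d(x,y)^{\alpha}\psi(z)-(\psi(x)-\psi(y))$, the two-sided bound $e^{-\Delta\tau_{0}^{m-1}}\le A_{k}\le B_{k}\le e^{\Delta\tau_{0}^{m-1}}$ already controls $|L^{m}\varphi-h_{f,\phi}|_{\alpha,\delta}$ with no further use of the structure of $\mathcal L^{N}$ --- whereas you prove it by a Lasota--Yorke recursion for $w_{m}=L^{m}\varphi-h_{f,\phi}$, re-running the estimates of Proposition~\ref{inv cone} on a difference of cone elements and invoking \eqref{2 estrelas} and \eqref{eq.radius} a second time to obtain the contraction factor $\rho_{1}<1$. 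Both are sound. Your recursion is more self-contained and portable (it needs no explicit formula for the projective metric beyond the $C^{0}$ step, only a genuine seminorm contraction), at the price of repeating the cone-invariance computation; the paper's route is shorter once Lemma~\ref{metrica cone} is available. Your reduction $n=mN+r$, the uniform bound on $\|\varphi\|_{\alpha}$ over the normalized cone via Lemmas~\ref{bolas} and~\ref{cotasup}, and the identity $\lambda_{f,\phi}^{-n}\mathcal L_{f,\phi}^{n}\varphi-h_{f,\phi}=\lambda_{f,\phi}^{-r}\mathcal L_{f,\phi}^{r}w_{m}$ correctly dispose of the remaining indices.
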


We finish this section by proving the spectral gap property of the transfer operator.

\begin{Teo}\label{gap} For $(f, \phi)\in\mathcal{H}_{\sigma},$ the spectrum of the operator $\mathcal{L}_{f, \phi}$, acting on the space $C^{\alpha}\left(M\right),$ has a decomposition: there exists $0 < r <\lambda_{f, \phi}$ such that $Spec(\mathcal{L}_{f, \phi}) = \left\{ \lambda_{f, \phi} \right\} \cup \Sigma $ with $\Sigma$ contained in a ball $B(0, r)$ centered at zero and of radius $r$.
\end{Teo}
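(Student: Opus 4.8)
The plan is to derive the spectral gap for $\mathcal{L}_{f,\phi}$ on $C^\alpha(M)$ from the cone contraction in Proposition~\ref{contracao no cone}, following the classical argument relating Birkhoff contraction to a uniform spectral estimate. First I would normalize the operator by setting $L=\lambda_{f,\phi}^{-1}\mathcal{L}_{f,\phi}$, so that $L$ has spectral radius $1$ and $L^N$ contracts the cone $\mathcal{C}_{k,\delta}$ in the projective metric $\Theta_k$ with factor $\tau_0=1-e^{-\Delta}<1$. From Proposition~\ref{h} we have the eigenfunction $h_{f,\phi}\in\mathcal{C}_{k,\delta}$ with $Lh_{f,\phi}=h_{f,\phi}$, and from Proposition~\ref{cota norma} the explicit rate $\|\lambda_{f,\phi}^{-n}\mathcal{L}^n_{f,\phi}\varphi-h_{f,\phi}\|_\alpha\le R\tau^n$ for $\varphi\in\mathcal{C}_{k,\delta}$ with $\int\varphi\,d\nu_{f,\phi}=1$. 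I would introduce the rank-one projection $P(\varphi)=\big(\int\varphi\,d\nu_{f,\phi}\big)h_{f,\phi}$, note $P^2=P$, $PL=LP=P$, and write $C^\alpha(M)=\langle h_{f,\phi}\rangle\oplus\ker P$, where $\ker P=\{\varphi:\int\varphi\,d\nu_{f,\phi}=0\}$ is $L$-invariant.

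The key step is to show that $L|_{\ker P}$ has spectral radius strictly less than $1$; equivalently, that $\|L^n|_{\ker P}\|_\alpha\le C\theta^n$ for some $\theta<1$. The obstacle is that Proposition~\ref{cota norma} is stated only for positive cone elements normalized to $\nu_{f,\phi}$-mass one, whereas a generic $\varphi\in\ker P$ is neither positive nor of mass one. The standard remedy is a decomposition trick: given $\varphi\in C^\alpha(M)$, write $\varphi=\varphi_1-\varphi_2$ where $\varphi_i=c_i\pm\varphi$ are both in $\mathcal{C}_{k,\delta}$ after adding a sufficiently large constant $c_i$ (using that adding a big constant to any $\alpha$-Hölder function lands it in the cone, since $|\varphi|_{\alpha,\delta}/\inf(\varphi+c)\to0$), and then renormalizing each $\varphi_i$ by its $\nu_{f,\phi}$-integral (which is positive and bounded below for the relevant range). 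Applying Proposition~\ref{cota norma} to each normalized piece and recombining, one gets $\|\lambda_{f,\phi}^{-n}\mathcal{L}^n_{f,\phi}\varphi-P\varphi\|_\alpha=\|L^n\varphi-P\varphi\|_\alpha\le C(\varphi)\tau^n$ with $C(\varphi)$ controlled linearly by $\|\varphi\|_\alpha$. Restricting to $\varphi\in\ker P$ gives $\|L^n\varphi\|_\alpha\le C\|\varphi\|_\alpha\tau^n$, hence by Gelfand's formula the spectral radius of $L|_{\ker P}$ is at most $\tau<1$.

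Finally I would translate this back: the spectrum of $L$ splits as $Spec(L)=\{1\}\cup Spec(L|_{\ker P})$, the eigenvalue $1$ being simple because $\mathrm{Im}(P)=\langle h_{f,\phi}\rangle$ is one-dimensional and $P$ is the associated eigenprojection (here I can invoke Corollary~\ref{eigenprojection}, since $1$ is isolated). Scaling by $\lambda_{f,\phi}$ yields $Spec(\mathcal{L}_{f,\phi})=\{\lambda_{f,\phi}\}\cup\Sigma$ with $\Sigma\subset B(0,r)$ for any $r$ with $\lambda_{f,\phi}\tau<r<\lambda_{f,\phi}$, and $\lambda_{f,\phi}$ is a simple eigenvalue with eigenfunction $h_{f,\phi}$; this is exactly the asserted decomposition. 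The main thing to get right is the uniformity of the constant $C(\varphi)$ in the cone-decomposition step — one must check that the constants $c_i$ needed to push $\varphi$ into $\mathcal{C}_{k,\delta}$, and the lower bound on $\int\varphi_i\,d\nu_{f,\phi}$, depend on $\varphi$ only through $\|\varphi\|_\alpha$, so that the estimate is genuinely a bound on the operator norm of $L^n|_{\ker P}$ and not merely a pointwise-in-$\varphi$ decay. Everything else is the routine passage from a contraction rate to a spectral gap via eigenprojections.
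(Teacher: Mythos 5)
Your proposal is correct and follows essentially the same route as the paper: normalize to $\mathcal{L}=\lambda_{f,\phi}^{-1}\mathcal{L}_{f,\phi}$, split $C^{\alpha}(M)$ as the span of $h_{f,\phi}$ plus the kernel of $\varphi\mapsto\int\varphi\,d\nu_{f,\phi}$, and obtain uniform exponential decay on that kernel by adding a constant to push $\varphi$ into the cone $\mathcal{C}_{k,\delta}$ and applying Proposition~\ref{cota norma} to the normalized pieces. The paper implements the same decomposition trick by writing $\mathcal{L}^n(\varphi)=\mathcal{L}^n(\varphi+2)-\mathcal{L}^n(2)$ for $\varphi$ with $|\varphi|_{\alpha,\delta}\le 1$, so your attention to the uniformity of the constants in $\|\varphi\|_\alpha$ matches what the paper does by normalizing first.
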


\begin{proof} Let $\mathcal{L}=\lambda_{f, \phi}^{-1}\mathcal{L}_{f, \phi}$ be the normalized operator. Consider the space $E_0 = \left\{ \psi \in C^{\alpha}\left(M\right) : \int \psi \ d\nu_{f, \phi} = 0 \right\}$ and let $E_1$ be the eigenspace of dimension $1$ of $\mathcal{L}$  associated to the eigenvalue $1.$ We point out that  ${\rm dim} \, E_1=1$ since $h_{f, \phi}$ is the unique eigenfunction associated to the spectral radius. Notice that it is possible to decompose $C^{\alpha}\left(M\right)$ as a direct sum of $E_0$ and $E_1$ by writing any $\varphi \in C^{\alpha}\left(M\right)$ as follows 
$$\varphi = \left[ \varphi - \left( \int\varphi \ d \nu_{f, \phi}\right) \cdot h_{f, \phi}\right] + \left[ \left( \int \varphi \ d\nu_{f, \phi} \right)\cdot h_{f, \phi} \right]= \varphi_0 + \varphi_1$$
with $\varphi_0 \in E_0$  and $\varphi_1 \in E_1$.

In fact, since $\int h_{f, \phi}\ d\nu_{f, \phi}=1$, we derive that  $\varphi_0:= \left[ \varphi - \int\varphi \ d \nu_{f, \phi}\cdot h_{f, \phi}  \right]$ satisfies 
\begin{align*}
\int\varphi_0\ d\nu_{f, \phi}&=\int \left(\varphi - \int\varphi \ d \nu_{f, \phi}\cdot h_{f, \phi}\right) \ d\nu_{f, \phi}\\
&= \int\varphi\ d \nu_{f, \phi} - \int\varphi \ d \nu_{f, \phi}\cdot\int h_{f, \phi}\, d \nu_{f, \phi} \\
& =0
\end{align*}
and so it is an element of $E_0$. Moreover $\varphi_1 = \left[ \left( \int \varphi \ d\nu_{f, \phi} \right)\cdot h_{f, \phi} \right]$ belongs to $E_1$ because
\begin{align*}
\mathcal{L}(\varphi_1)&=\lambda_{f, \phi}^{-1}\mathcal{L}_{f, \phi}(\varphi_1)=\lambda_{f, \phi}^{-1}\mathcal{L}_{f, \phi}\left(\int \varphi \ d\nu_{f, \phi}\cdot h_{f, \phi}\right)\\
&=\lambda_{f, \phi}^{-1}\int \varphi \ d\nu_{f, \phi}\cdot\mathcal{L}_{f, \phi}(h_{f, \phi})
=\lambda_{f, \phi}^{-1}\int \varphi \ d\nu_{f, \phi}\cdot \lambda_{f, \phi} h_{f, \phi}=\varphi_1.
\end{align*}

Now it is enough to show that $\mathcal{L}^n$ is a contraction in $E_0$ for $n$ sufficiently large.

Fix $k>0$ large enough. Given $\varphi \in E_0$ with $\left|\varphi \right|_{\alpha, \delta} \leq 1$ notice that $\varphi$ does not necessarily belong to the cone $\mathcal{C}_{k, \delta}$ but for example $\left(\varphi +2\right) \in \mathcal{C}_{k, \delta}$ since $$\frac{\left|\varphi + 2 \right|_{\alpha. \delta}}{\inf\left(\varphi +2 \right)}  = \frac{\left|\varphi \right|_{\alpha. \delta}}{\inf\left(\varphi +2 \right)} \leq  \frac{1}{\inf\left(\varphi +2 \right)} \leq k \quad \mbox{for} \  k  \ \mbox{large}  . $$
Therefore applying Proposition~\ref{cota norma} we derive that
\begin{eqnarray*} 
	\left\| \mathcal{L}^n(\varphi) \right\|_{\alpha} &=& \left\| \mathcal{L}^n(\varphi + 2) - \mathcal{L}^n(2)\right\|_{\alpha}\\ \\
	&\leq& \left\| \mathcal{L}^n(\varphi + 2) - 2h_{f, \phi} \right\|_{\alpha} + \left\| \mathcal{L}^n(2) - 2h_{f, \phi} \right\|_{\alpha} \\\\
	&\leq& \left\| \left( \int \varphi + 2 \ d\nu_{f, \phi} \right) \mathcal{L}^n\left(\frac{\varphi + 2}{\int \varphi + 2 \ d\nu_{f, \phi}}\right) -2h_{f, \phi} \right\|_\alpha\\ \\
	 &+&\left\| \mathcal{L}^n(2) - 2h_{f, \phi} \right\|_\alpha \\\\
	&\leq& 2 \left\| \mathcal{L}^n\left(\frac{\varphi + 2}{\int \varphi + 2 \ d\nu_{f, \phi}}\right) -h_{f, \phi} \right\|_\alpha + 2\left\| \mathcal{L}^n( \mathbf{1}) - h_{f, \phi} \right\|_\alpha \\\\
	&\leq& 2 L\tau^n + 2L\tau^n = 4L\tau^n .
\end{eqnarray*}

This contraction shows that the spectrum of $\mathcal{L}$ admits a decomposition $Spec(\mathcal{L})=\{1\}\cup\Sigma_{0}$ where $\Sigma_{0}$ is contained in a ball centered at zero and radius strictly less than one. To conclude the proof just observe that we obtain the spectrum of $\mathcal{L}_{f, \phi}$ by multiplying the spectrum of $\mathcal{L}$ by $\lambda_{f, \phi}$.
\end{proof}

The spectral gap property  established in the previous result is the key tool we will use to derive  our main results. In the following sections we prove these results employing  probabilistic and analytic arguments. 
In \cite{Benoit} the authors use  a differential geometrical approach to obtain consequences of the spectral gap property for a  rather general framework. Some of our results could also be obtained 
using \cite{Benoit}. More specifically, Theorem~\ref{TCL}  follows from \cite[Theorem 5.6]{Benoit}, while Theorem~\ref{analiticidade potencial}  follows from \cite[Corollary B]{Benoit}.

                   
\section{Statistical behavior of the equilibrium state}\label{Statistical}

In this section we conclude the proof of Theorem~\ref{formalismo} and derive  statistical properties of the equilibrium state, namely Theorem~\ref{decaimento f} and Theorem~\ref{TCL}. We show that  a classical proof of the exponential decay  of correlations holds in this context, for the sake of completeness. We end the section recalling that  a central limit theorem can be  obtained  by applying the well known Gordin Theorem. 
%

Let $\mu_{f, \phi}:=h_{f, \phi}\nu_{f, \phi}$, where $\mathcal{L}_{f,\phi}h_{f,\phi}=\lambda_{f,\phi} h_{f,\phi}$ and  $\mathcal{L}_{f,\phi}^{\ast}\nu_{f,\phi}=\lambda_{f,\phi} \nu_{f,\phi}$. 
It  is straightforward to check that $\mu_{f, \phi}$ is invariant under $f$. Moreover, since $\nu_{f, \phi}(\Sigma_{\sigma})=1$, we also have that $\mu_{f, \phi}(\Sigma_{\sigma})=1.$  

Our goal is to prove that $\mu_{f, \phi}$ is the unique equilibrium state of $(f,\phi)$, which finishes the proof of Theorem~\ref{formalismo}. However, we first establish that the decay of correlations is exponential for the probability measure  $\mu_{f, \phi}$.  For this we 
can  borrow some ideas from \cite{RS17} since we have already  proved that the transfer operator has a spectral gap property.


\begin{Prop}\label{decaimento}
	For every $(f, \phi)\in\mathcal{H}_{\sigma}$ the invariant measure $\mu_{f, \phi}$ has exponential decay of correlations for H\"older continuous observables: there exists $0< \tau < 1$ such that for all $\varphi \in L^1(\mu_{f, \phi})$ and $\psi \in C^{\alpha}(M) $ there exists a positive constant $K(\varphi, \psi) $ satisfying:
	$$\left|\int \left(\varphi \circ f^n\right)\cdot \psi \ d\mu_{f, \phi} - \int \varphi \ d\mu_{f, \phi} \int \psi\ d\mu_{f, \phi}  \right| \leq K(\varphi, \psi) \tau^n \quad \mbox{for all} \ n\geq 1.  $$
\end{Prop}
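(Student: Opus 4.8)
The plan is to reduce the correlation estimate to the convergence rate already established in Proposition~\ref{cota norma}. First I would observe that it suffices to prove the estimate for observables $\psi \in C^\alpha(M)$ with $\int \psi \, d\mu_{f,\phi} = 0$, since the general case follows by replacing $\psi$ with $\psi - \int \psi \, d\mu_{f,\phi}$ and noting that the constant absorbs into $K(\varphi,\psi)$. Next I would rewrite the correlation integral using the duality between $\mathcal{L}_{f,\phi}$ and its adjoint together with the identities $\mathcal{L}_{f,\phi}^\ast \nu_{f,\phi} = \lambda_{f,\phi}\nu_{f,\phi}$ and $\mathcal{L}_{f,\phi} h_{f,\phi} = \lambda_{f,\phi} h_{f,\phi}$. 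The key algebraic fact is that for any $u \in C^0(M)$ one has $\mathcal{L}_{f,\phi}\big((u\circ f)\cdot v\big) = u \cdot \mathcal{L}_{f,\phi}(v)$; applying this $n$ times gives $\mathcal{L}_{f,\phi}^n\big((\varphi\circ f^n)\cdot \psi h_{f,\phi}\big) = \varphi \cdot \mathcal{L}_{f,\phi}^n(\psi h_{f,\phi})$. Integrating against $\nu_{f,\phi}$ and using the eigenmeasure property yields
\begin{equation*}
\int (\varphi\circ f^n)\,\psi\, d\mu_{f,\phi} = \int (\varphi\circ f^n)\,\psi\, h_{f,\phi}\, d\nu_{f,\phi} = \lambda_{f,\phi}^{-n}\int \varphi \cdot \mathcal{L}_{f,\phi}^n(\psi h_{f,\phi})\, d\nu_{f,\phi}.
\end{equation*}

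The heart of the argument is then to show that $\lambda_{f,\phi}^{-n}\mathcal{L}_{f,\phi}^n(\psi h_{f,\phi})$ converges exponentially fast in the sup norm to $\big(\int \psi h_{f,\phi}\, d\nu_{f,\phi}\big) h_{f,\phi} = \big(\int \psi\, d\mu_{f,\phi}\big) h_{f,\phi} = 0$ (using the normalization $\int \psi\, d\mu_{f,\phi}=0$). To invoke Proposition~\ref{cota norma} I need $\psi h_{f,\phi}$, after suitable normalization, to lie in the cone $\mathcal{C}_{k,\delta}$. Since $\psi$ need not be positive, the standard trick is to write $\psi h_{f,\phi} = \big(\psi + c\big)h_{f,\phi} - c\, h_{f,\phi}$ for a large constant $c>0$ so that $\psi + c > 0$; one then checks that $(\psi+c)h_{f,\phi}$ belongs to $\mathcal{C}_{k,\delta}$ for $k$ large (its logarithm has bounded local Hölder seminorm because both factors do, and $h_{f,\phi}$ is bounded away from zero), and similarly $h_{f,\phi} \in \mathcal{C}_{k,\delta}$ since $\mathcal{L}_{f,\phi} h_{f,\phi} = \lambda_{f,\phi} h_{f,\phi}$ forces $h_{f,\phi}$ into the invariant cone. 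Normalizing each of these two cone elements to have $\nu_{f,\phi}$-integral equal to $1$ and applying Proposition~\ref{cota norma} to each, the difference $\lambda_{f,\phi}^{-n}\mathcal{L}_{f,\phi}^n\big((\psi+c)h_{f,\phi}\big) - \lambda_{f,\phi}^{-n}\mathcal{L}_{f,\phi}^n\big(c\, h_{f,\phi}\big)$ converges to $\big(\int(\psi+c)h_{f,\phi}\,d\nu_{f,\phi}\big)h_{f,\phi} - \big(\int c\, h_{f,\phi}\,d\nu_{f,\phi}\big)h_{f,\phi} = \big(\int \psi\, d\mu_{f,\phi}\big)h_{f,\phi} = 0$, with error bounded by $R'\tau^n$ where $R'$ depends linearly on $\|\psi\|_\alpha$ (through $c$ and the normalizing integrals).

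Finally I would assemble the pieces: combining the displayed identity with the bound $\big\|\lambda_{f,\phi}^{-n}\mathcal{L}_{f,\phi}^n(\psi h_{f,\phi})\big\|_0 \le R'\tau^n$, and recalling that the eigenmeasure property gives $\int \varphi \cdot \mathcal{L}_{f,\phi}^n(\psi h_{f,\phi})\, d\nu_{f,\phi}$ controlled in absolute value by $\|\mathcal{L}_{f,\phi}^n(\psi h_{f,\phi})\|_0 \cdot \|\varphi\|_{L^1(\nu_{f,\phi})}$, and that $\|\varphi\|_{L^1(\nu_{f,\phi})} \le C \|\varphi\|_{L^1(\mu_{f,\phi})}$ since $h_{f,\phi}$ is bounded away from zero and infinity, we obtain
\begin{equation*}
\left| \int (\varphi\circ f^n)\,\psi\, d\mu_{f,\phi} - \int\varphi\, d\mu_{f,\phi}\int \psi\, d\mu_{f,\phi}\right| \le K(\varphi,\psi)\,\tau^n
\end{equation*}
with $K(\varphi,\psi) = C\, R'\, \|\varphi\|_{L^1(\mu_{f,\phi})}$ depending linearly on $\|\varphi\|_{L^1(\mu_{f,\phi})}$ and on $\|\psi\|_\alpha$. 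The main obstacle I anticipate is the bookkeeping around the cone membership of $\psi h_{f,\phi}$: one has to be careful that the constant $c$ needed to make $\psi + c$ positive, and hence the constant $k$ needed for cone membership, can be chosen once and for all (they depend on $\psi$ but that is allowed since $K$ is permitted to depend on $\psi$), and that Proposition~\ref{cota norma} is being applied to cone elements with the correct normalization. The approximation of $L^1(\mu_{f,\phi})$ observables $\varphi$ is handled for free here because $\varphi$ enters only through an integral against a uniformly-bounded function, so no density argument is actually needed.
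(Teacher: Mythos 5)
Your proposal is correct and follows essentially the same route as the paper's proof: the operator identity $\mathcal{L}^n\bigl((\varphi\circ f^n)\,v\bigr)=\varphi\,\mathcal{L}^n(v)$, duality against the conformal measure, Proposition~\ref{cota norma} for the exponential convergence, and a splitting of $\psi h_{f,\phi}$ into cone elements (the paper uses the positive/negative parts of $\psi h_{f,\phi}$ shifted by a constant $B=k^{-1}|\psi h_{f,\phi}|_{\alpha,\delta}$, while you use $(\psi+c)h_{f,\phi}-c\,h_{f,\phi}$; both are routine variants of the same trick).
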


\begin{proof}
	Let $\varphi, \psi\! \in\! \! C^{\alpha}(M)$ and note that the transfer operator satisfy the following for all $n\!\in \! \mathbb{N}$
	$$   \mathcal{L}^ n\left( (\varphi \circ f^n) \cdot\psi \right) = \varphi \cdot \mathcal{L}^ n(\psi).$$ 
	Recall that by Proposition~\ref{h}  the eigenfunction of the transfer operator $h_{f, \phi}$ is bounded away from zero and infinity.
	We first assume that 
	$\psi \cdot h \in \mathcal{C}_{k, \delta}$ for $k$ large enough and  without loss of generality, we can consider $\int \psi \  d\mu_{f, \phi} = 1$. 
	\begin{eqnarray*}
		&&\left| \int \left(\varphi \circ{ f}^{n}\right)\cdot \psi \ d\mu_{f, \phi}-\int \varphi \ d\mu_{f, \phi} \int \psi\ d\mu_{f, \phi}\right|\\
		&=& \left| \int\! \varphi \cdot \lambda^{-n}_{f, \phi}\mathcal{L}_{f, \phi}^n\left( \psi \cdot h_{f, \phi} \right) \ d\nu_{f, \phi} - \int\! \varphi \ d\mu_{f, \phi}   \right| \\
		&\!\!=\!\!& \int\! \varphi \cdot \left[\frac{\lambda^{-n}_{f, \phi}\mathcal{L}_{f, \phi}^n\left( \psi \cdot h_{f, \phi} \right)}{h_{f, \phi}} - 1 \right] \ d\mu_{f, \phi} \\
		&\!\!\leq\!\!& \int\! \left| \varphi \right| \ d\mu_{f, \phi} \cdot \left\| \frac{\lambda^{-n}_{f, \phi}\mathcal{L}_{f, \phi}^n\left( \psi \cdot h_{f, \phi} \right)}{h_{f, \phi}} - 1 \right\|_0.  
	\end{eqnarray*}

	Therefore, applying Proposition~\ref{cota norma}, there exists some positive constant $L_1$ such that 
	$$\left\| \frac{\lambda^{-n}_{f, \phi}\mathcal{L}_{f, \phi}^n\left( \psi \cdot h_{f, \phi} \right)}{h_{f, \phi}} - 1 \right\|_0 \leq \left\|h_{f, \phi} \right\|_0 \left\| \lambda^{-n}_{f, \phi}\mathcal{L}_{f, \phi}^n\left( \psi \cdot h_{f, \phi} \right) - h_{f, \phi} \right\|_0 \leq  L_1 \tau^ n. $$
	
	Now if $\psi \cdot h \notin \mathcal{C}_{k, \delta}$ we fix $B=k^{-1}|\psi \cdot h |_{\alpha, \delta}$ and consider
	$\xi:=\psi\cdot h$ where
	$$\xi=\xi_{B}^{+}-\xi_{B}^{-}\,\,\,\, \mbox{\and}\,\,\,\, \xi_{B}^{\pm}=\frac{1}{2}\left(|\xi|\pm\xi\right)+B.$$
	
	Thus $\xi_{B}^{\pm}\in \mathcal{C}_{k,\delta}$ and then we apply the previous estimates to $\xi_{B}^{\pm}.$ The result follows by linearity.
\end{proof}
}
%

We recall that a measure that is invariant under a map $f$  is called {\it exact} if  it satisfies
$ \lim\limits_{n\to +\infty}\mu(f^n(A))= 1 \  $
for all measurable set $ A$ such that $ \mu(A)>0.$
In particular, an exact probability measure is ergodic. 

The exponential decay of correlations implies the exactness property. The reader can see  a proof of this result  in [\cite{RS17}, Corollary 6.2].

\begin{Cor} \label{exactness f}
	The invariant measure $\mu_{f, \phi}$ is exact. 
\end{Cor}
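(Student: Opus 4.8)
The plan is to deduce exactness from the exponential decay of correlations already established, via the classical characterization of exactness (Lin's criterion): an $f$-invariant probability $\mu$ is exact if and only if the Perron--Frobenius operator $P$ of $(f,\mu)$ on $L^1(\mu)$ — the pre-dual of the Koopman operator, i.e. $\int Pg\cdot\varphi\,d\mu=\int g\cdot(\varphi\circ f)\,d\mu$ for $\varphi\in L^\infty(\mu)$ — satisfies $\|P^ng\|_{L^1(\mu)}\to 0$ for every $g\in L^1(\mu)$ with $\int g\,d\mu=0$. So I would first identify $P$ explicitly. Writing $\mu_{f,\phi}=h_{f,\phi}\nu_{f,\phi}$ and using $\mathcal{L}_{f,\phi}^{\ast}\nu_{f,\phi}=\lambda_{f,\phi}\nu_{f,\phi}$, $\mathcal{L}_{f,\phi}h_{f,\phi}=\lambda_{f,\phi}h_{f,\phi}$ together with the identity $\mathcal{L}_{f,\phi}\big(\psi\cdot(\varphi\circ f)\big)=\varphi\cdot\mathcal{L}_{f,\phi}\psi$ (used in the proof of Proposition~\ref{decaimento}), a routine computation shows that on $C^{0}(M)$ the operator $P$ is given by
$$Pg=\frac{1}{\lambda_{f,\phi}}\cdot\frac{\mathcal{L}_{f,\phi}(h_{f,\phi}\,g)}{h_{f,\phi}},$$
and that $P\mathbf{1}=\mathbf{1}$ and $\int Pg\,d\mu_{f,\phi}=\int g\,d\mu_{f,\phi}$; being a Perron--Frobenius operator it is a positive $L^1(\mu_{f,\phi})$-contraction, $\|P\|\le 1$.

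Next I would show $\|P^ng\|_{L^1(\mu_{f,\phi})}\to0$ for mean-zero $g$ in two steps. If $g\in C^{\alpha}(M)$ with $\int g\,d\mu_{f,\phi}=0$, then $h_{f,\phi}g\in C^{\alpha}(M)$ and $\int h_{f,\phi}g\,d\nu_{f,\phi}=0$; decomposing $h_{f,\phi}g=\xi_B^{+}-\xi_B^{-}$ with $\xi_B^{\pm}\in\mathcal{C}_{k,\delta}$ as in the proof of Proposition~\ref{decaimento} (both pieces having the same integral $c>0$ against $\nu_{f,\phi}$) and applying Proposition~\ref{cota norma} to $\xi_B^{\pm}/c$ yields a constant with $\|\lambda_{f,\phi}^{-n}\mathcal{L}_{f,\phi}^{n}(h_{f,\phi}g)\|_{\alpha}\le R'\tau^{n}$; since $h_{f,\phi}$ is bounded away from $0$ by Proposition~\ref{h}, this gives $\|P^{n}g\|_{0}\le R''\tau^{n}$, hence $\|P^{n}g\|_{L^1(\mu_{f,\phi})}\to0$. (This is essentially already contained in the proof of Proposition~\ref{decaimento}.) For general $g\in L^1(\mu_{f,\phi})$ with $\int g\,d\mu_{f,\phi}=0$ and $\varepsilon>0$, choose $\tilde g\in C^{\alpha}(M)$ with $\|g-\tilde g\|_{L^1(\mu_{f,\phi})}<\varepsilon$ (Hölder functions are dense in $L^1(\mu_{f,\phi})$) and replace $\tilde g$ by $\tilde g-\int\tilde g\,d\mu_{f,\phi}$, still $\alpha$-Hölder, mean zero, and within $2\varepsilon$ of $g$ in $L^1(\mu_{f,\phi})$. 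Then, using $\|P\|_{L^1\to L^1}\le1$,
$$\|P^{n}g\|_{L^1(\mu_{f,\phi})}\le\|P^{n}(g-\tilde g)\|_{L^1(\mu_{f,\phi})}+\|P^{n}\tilde g\|_{L^1(\mu_{f,\phi})}\le 2\varepsilon+R''\tau^{n},$$
and letting $n\to\infty$ and then $\varepsilon\to0$ gives $\|P^{n}g\|_{L^1(\mu_{f,\phi})}\to0$. By Lin's criterion $\mu_{f,\phi}$ is exact, and in particular ergodic.

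An alternative, more self-contained route avoids Lin's theorem and argues directly that $\mu_{f,\phi}(f^{n}(A))\to1$ whenever $\mu_{f,\phi}(A)>0$: if this failed along a subsequence $n_k$, then $B_k:=M\setminus f^{n_k}(A)$ has $\mu_{f,\phi}(B_k)\ge\varepsilon>0$ and $f^{-n_k}(B_k)\cap A=\emptyset$; approximating $\mathbf{1}_A$ in $L^1(\mu_{f,\phi})$ by $\psi\in C^{\alpha}(M)$ and applying Proposition~\ref{decaimento} with $\varphi=\mathbf{1}_{B_k}\in L^1(\mu_{f,\phi})$ — whose correlation constant $K(\mathbf{1}_{B_k},\psi)$ is controlled by $\|\mathbf{1}_{B_k}\|_{L^1(\mu_{f,\phi})}\,C(\psi)\le C(\psi)$, uniformly in $k$ — forces $\int_{f^{-n_k}(B_k)}\psi\,d\mu_{f,\phi}\to\mu_{f,\phi}(B_k)\int\psi\,d\mu_{f,\phi}$, which is $\ge \tfrac{\varepsilon}{2}\mu_{f,\phi}(A)>0$ for $\psi$ close enough to $\mathbf{1}_A$, contradicting $f^{-n_k}(B_k)\cap A=\emptyset$. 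The only genuinely technical point in either route is bookkeeping: in the first, verifying that $P$ really is the Perron--Frobenius operator of $(f,\mu_{f,\phi})$ so that Lin's theorem applies; in the second, that the constant in the decay estimate of Proposition~\ref{decaimento} depends on the $L^1$-observable only through its $L^1$-norm. Everything else is an immediate consequence of the spectral gap established in Theorem~\ref{gap}; see also \cite[Corollary 6.2]{RS17}.
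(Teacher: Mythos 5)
Your proposal is correct, and it follows the same route as the paper, which simply records that exponential decay of correlations implies exactness and refers to \cite[Corollary 6.2]{RS17} for the details; your second, direct argument (approximating $\mathbf{1}_A$ by a H\"older observable and using that the constant in Proposition~\ref{decaimento} depends on the $L^1$ factor only through its $L^1$-norm) is essentially the cited proof, while the first route via Lin's criterion is an equivalent reformulation through the normalized transfer operator. Both are sound; the only point worth making explicit in the first route is the standard equivalence between triviality of the tail $\sigma$-algebra and the condition $\mu_{f,\phi}(f^{n}(A))\to 1$ used as the definition of exactness here.
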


{Now we can prove that $\mu_{f, \phi}:=h_{f, \phi}\nu_{f, \phi}$ is the equilibrium state associated to $(f, \phi)\in\mathcal{H}_{\sigma}$ and complete the proof of Theorem~\ref{formalismo}.}

	In \cite{RV16} it was proved that $\nu_{f, \phi}$ satisfies a type of Gibbs property at hyperbolic times: for $\varepsilon\leq\delta$   there exists $C=C(\varepsilon)>0$ such that if  $n$ is a hyperbolic time for $x\in M$ then
	\begin{eqnarray*} \label{Gibbs}
		C^{-1} \leq  \frac{\nu_{f, \phi}(B_\varepsilon(x, n))}{\exp(S_n\phi(y) - n\log\lambda_{f,\phi})}  \leq C.
	\end{eqnarray*}
for all $y\in B_\varepsilon(x, n).$ 
	Recalling that the density $h_{f, \phi}$ is bounded away from zero, it follows that $\mu_{f, \phi}$ is equivalent to $\nu_{f, \phi}$ and, thus, $\mu_{f, \phi}$  also satisfies the Gibbs property at hyperbolic times: 
	\begin{eqnarray*} 
		\tilde{C}^{-1} \leq  \frac{\mu_{f, \phi}(B_\varepsilon(x, n))}{\exp(S_n\phi(y) - n\log\lambda_{f,\phi})}  \leq \tilde{C}.
	\end{eqnarray*}
Rewriting the inequalities above we have for $\mu_{f, \phi}$-almost every point $x\in M$
\begin{eqnarray*}
\log\lambda_{f, \phi}-\lim_{n\to\infty}S_n\phi(y)& \leq& \limsup_{n\to\infty}-\frac{1}{n}\log\mu_{f, \phi}(B_\varepsilon(x, n))\\
&\leq& \log\lambda_{f, \phi}-\lim_{n\to\infty}S_n\phi(y),
\end{eqnarray*}
where the limit was considered when $n$ goes to infinity since $\mu_{f, \phi}$-almost every point $x\in M$ admit infinitely many hyperbolic times.
	From Birkhoff's Ergodic Theorem we obtain that
\begin{eqnarray*}	
\log\lambda_{f, \phi}-\int \phi \,d\mu_{f, \phi}&\leq& \limsup_{n\to\infty}-\frac{1}{n}\log\mu_{f, \phi}(B_\varepsilon(x, n))\\
&\leq &\log\lambda_{f, \phi}-\int \phi \,d\mu_{f, \phi}.
\end{eqnarray*}
	Taking the limit when $\varepsilon$ goes to zero the Brin-Katok entropy formula implies $$h_{\mu_{f, \phi}}(f)=\lim_{\varepsilon\to 0}\limsup_{n\to\infty}-\frac{1}{n}\log\mu_{f, \phi}(B_\varepsilon(x, n))=\log\lambda_{f, \phi}-\int \phi \,d\mu_{f, \phi}.$$

	
	Recalling that the topological pressure $P_{f}(\phi)$ is equal to $\log\lambda_{f, \phi}$ we have proved that $\mu_{f, \phi}$ is an equilibrium state for $(f, \phi)\in \mathcal{H}_{\sigma}.$ By the uniqueness established in \cite{RV16}  we conclude the proof of  Theorem~\ref{formalismo}.

A central limit theorem for $\mu_{f, \phi}$ can be obtained from the exponential decay of correlations, as stated in Theorem \ref{TCL}. Its proof is obtained by applying 
	a non-invertible case of an abstract central limit theorem due to Gordin, which can be found in [\cite{VianaStoch}, Theorem 2.11].  The reader can verify the steps of a similar proof in [\cite{RS17}, Theorem E].



\section{Analyticity of thermodynamical quantities with respect to the potential} \label{anal}
In this section we treat the analyticity of the thermodynamical quantities as the potential varies. Since these quantities are intrinsically related with the spectrum of the transfer operator, we will analyze its behavior under analytic perturbations. 

More specifically, the spectral gap property of the transfer operator allows us to consider the projection operator defined in Subsection~\ref{defprojection}. Using the perturbation theory due to Kato in \cite{Kato}, we prove the analyticity of this operator. Finally, we can describe the thermodynamical quantities in terms of  the projection operator and thus, the analytical dependence will follow.

Since we are fixing the underlying dynamics and varying only the potential
 we simplify the notation by omitting the dynamics as follows $\mathcal{L}_{f,\phi}= \mathcal{L}_{\phi}$, $\mu_{f,\phi}= \mu_{\phi}$, $\nu_{f,\phi}=\nu_{\phi}$ and $h_{f,\phi}= h_{\phi}$.

We begin defining analyticity for operators on Banach spaces. Most properties of the classical analytic functions setting remain true in this context.

Let $X,Y$ be Banach vector spaces. Denote by $L^{k}_{s}(X,Y)$ the space of symmetric $k$-linear maps from the $k$-fold product $X^k \!\! := \!\!  X\times\! \cdots \! \times \!\! X$ into $Y$. Given $T_k\in L^{k}_{s}(X, Y)$ and $(H,\cdots,H) \! \in \! X^k$ we write $T_k( H^k)\!\! :=T_k(H, \cdots, H).$

We say that a mapping $\Gamma:U\subset X\to Y$ defined on an open subset $U\subset X$ is \emph{analytic} if for all $x\in U$ there exist $\varepsilon>0$ and $T_k:=T_k(x)\in L^{k}_{s}(X,Y)$, for every $k\geq 1$, depending only on $x$ such that 
$$\Gamma(x+H)=\Gamma(x)+\sum_{k=1}^{+\infty}\frac{1}{k!}T_ k(H^k)$$
for all $H$ in an $\varepsilon$-neighborhood of zero and the series is uniformly convergent. 

Given a potential $\phi\in C^{\alpha}(M)$ it easily follows that  $\mathcal{L}_{\phi}(\psi)\in C^{\alpha}(M)$ for any $\psi\in C^{\alpha}(M)$. In \cite{BCV} it was proved that the application which associates $\phi\in C^{\alpha}(M)$ to the transfer operator $\mathcal{L}_{\phi}:C^{\alpha}(M)\to C^{\alpha}(M)$ is analytic. 

In the next theorem we prove the analyticity on the potential of the projection mapping. We follow closely the ideas of Sarig~\cite[Theorem 5.6]{NSarig}.

\begin{Teo} \label{analiticidade} 
Given $\phi_0  \in\mathcal{P}_\sigma$, let $\lambda_{\phi_0}$ be the spectral radius of $\mathcal{L}_{\phi_0}$ and  let $\gamma$ be a  closed smooth simple curve which separates $\lambda_{\phi_0}$ from the rest of the spectrum. Then the projection mapping
\begin{equation*}\label{projecao}
E(\phi):=   \frac{1}{2\pi i}\int_{\gamma}(zI-\mathcal{L}_\phi)^{-1}\, dz
\end{equation*}
is analytic in a neighborhood of $\phi_0$ contained in $\mathcal{P}_\sigma$.
\end{Teo}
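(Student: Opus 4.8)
The strategy is to reduce the analyticity of $E(\phi)$ to the already-known analyticity of $\phi\mapsto\mathcal{L}_\phi$ (from \cite{BCV}) together with the fact that inversion is an analytic operation on the Banach algebra $\mathcal{B}(C^\alpha(M))$. First I would recall that, by Theorem~\ref{gap}, the operator $\mathcal{L}_{\phi_0}$ has the spectral gap property, so we may indeed fix a closed smooth simple curve $\gamma$ in the resolvent set of $\mathcal{L}_{\phi_0}$ enclosing only the eigenvalue $\lambda_{\phi_0}$; moreover $\mathrm{dist}(\gamma,\mathrm{Spec}(\mathcal{L}_{\phi_0}))>0$, so $(zI-\mathcal{L}_{\phi_0})^{-1}$ is a bounded operator, uniformly bounded for $z\in\gamma$, say by some constant $C_0$. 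Since $\gamma$ is compact this uniform bound is legitimate.

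Next, using the analyticity (hence continuity) of $\phi\mapsto\mathcal{L}_\phi$, I would choose a neighborhood $\mathcal{U}$ of $\phi_0$ in $\mathcal{P}_\sigma$ (recall $\mathcal{P}_\sigma$ is open, as noted after Theorem~\ref{analiticidade potencial}) so small that $\|\mathcal{L}_\phi-\mathcal{L}_{\phi_0}\|<C_0^{-1}$ for all $\phi\in\mathcal{U}$. Then for every $z\in\gamma$ we have $\|(zI-\mathcal{L}_\phi)-(zI-\mathcal{L}_{\phi_0})\|<\|(zI-\mathcal{L}_{\phi_0})^{-1}\|^{-1}$, so by Lemma~\ref{openproperty} the operator $zI-\mathcal{L}_\phi$ is invertible and its inverse is given by the Neumann-type series
$$
(zI-\mathcal{L}_\phi)^{-1}=(zI-\mathcal{L}_{\phi_0})^{-1}\sum_{n=0}^{\infty}\big[(\mathcal{L}_\phi-\mathcal{L}_{\phi_0})(zI-\mathcal{L}_{\phi_0})^{-1}\big]^{n},
$$
which converges uniformly in $z\in\gamma$. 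In particular $\gamma$ stays in the resolvent set of $\mathcal{L}_\phi$ and still separates $\lambda_\phi$ (the perturbed simple eigenvalue) from the rest of $\mathrm{Spec}(\mathcal{L}_\phi)$, so $E(\phi)$ is well-defined for all $\phi\in\mathcal{U}$ and is a projection by Corollary~\ref{eigenprojection}.

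It then remains to expand $E(\phi)$ as a convergent power series in $H=\phi-\phi_0$. Writing $\mathcal{L}_{\phi_0+H}=\mathcal{L}_{\phi_0}+\sum_{k\ge1}\tfrac1{k!}\mathcal{D}_k(H^k)$ for the known analytic expansion of the transfer operator, one substitutes this into the Neumann series above and collects terms: each summand $\big[(\mathcal{L}_{\phi_0+H}-\mathcal{L}_{\phi_0})(zI-\mathcal{L}_{\phi_0})^{-1}\big]^{n}$ is a finite composition of bounded operators depending polynomially (in the multilinear sense) on $H$, so the whole double series, after integrating $\tfrac{1}{2\pi i}\int_\gamma(\,\cdot\,)\,dz$ term by term, rearranges into $E(\phi_0)+\sum_{k\ge1}\tfrac1{k!}T_k(H^k)$ with $T_k\in L^k_s(C^\alpha(M),\mathcal{B}(C^\alpha(M)))$. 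The term-by-term integration is justified by the uniform convergence on the compact curve $\gamma$, and the rearrangement into a power series in $H$ is justified by shrinking $\mathcal{U}$ so that $\|H\|_\alpha$ is small enough that all the relevant geometric-series majorants (built from $C_0$, $\sup_{z\in\gamma}\|(zI-\mathcal{L}_{\phi_0})^{-1}\|$, and the radius of convergence of the $\mathcal{L}_\phi$ expansion) converge; this gives absolute convergence and legitimizes the interchange of summation order. I expect the main obstacle to be precisely this bookkeeping step — carefully organizing the composition of the two nested series (the Neumann series in $n$ and the analytic expansion of $\mathcal{L}_\phi$ in $H$) into a single uniformly convergent power series with symmetric multilinear coefficients, and verifying the radius-of-convergence estimates that make the rearrangement valid — rather than any conceptual difficulty, since the spectral gap (Theorem~\ref{gap}), the openness of invertibility (Lemma~\ref{openproperty}), and the analyticity of $\phi\mapsto\mathcal{L}_\phi$ do all the heavy lifting.
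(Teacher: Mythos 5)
Your proposal is correct and follows essentially the same route as the paper: both arguments combine Lemma~\ref{openproperty} (the Neumann series for the perturbed resolvent, uniformly over the compact curve $\gamma$) with the analyticity of $\phi\mapsto\mathcal{L}_\phi$ from \cite{BCV}, then rearrange the resulting double series in powers of $\phi-\phi_0$ and integrate term by term over $\gamma$. The only cosmetic difference is that the paper first establishes joint analyticity of the resolvent on the open set $\Omega=\{(z,\phi):zI-\mathcal{L}_\phi \text{ invertible}\}$ and obtains uniformity via a finite subcover of $\gamma$, whereas you use the uniform bound $C_0=\sup_{z\in\gamma}\|(zI-\mathcal{L}_{\phi_0})^{-1}\|$ directly.
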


\begin{proof} 
Consider the set
\[
\Omega := \{(z,\phi) \in \mathbb{C}\times \mathcal{P}_\sigma: (zI - \mathcal{L}_{\phi}) \,\, \textrm{has a bounded inverse}\}.
\]	
	
Note that $\Omega$ is an open set in the product topology. In fact, given $(\overline{z}, \overline{\phi})\in \Omega$ since $\mathcal{P}_\sigma$ is open   and 
the map $\phi \in \mathcal{P}_\sigma \mapsto \mathcal{L}_{\phi}$ is analytic (and therefore continuous) we can obtain   $\delta> 0$ sufficiently small such that  for all $\psi \in B(\overline{\phi}, \delta)$ its transfer operator $\mathcal{L}_{\psi}$    is  close to  $\mathcal{L}_{\overline{\phi}}$. Moreover we have 
  $(zI - \mathcal{L}_{\psi})$ is  close to $
(\overline{z}I - \mathcal{L}_{\overline{\phi}}) $ 
for all  $z \in B(\overline{z}, \delta)$ and for all  $\psi \in B(\overline{\phi}, \delta)$. Lemma~\ref{openproperty} assures that 	 $(zI - \mathcal{L}_{\psi})$ has a bounded inverse for all  $z \in B(\overline{z}, \delta)$ and for all  $\psi \in B(\overline{\phi}, \delta)$ and, thus,  $\Omega$ is open.

Let $\phi_0 \in\mathcal{P}_\sigma$ and let  $\gamma$  be a closed smooth  simple curve that separates  $\lambda_{\phi_0}$ and  $Spec ({\mathcal{L}_{\phi_0}})\setminus\{\lambda_{\phi_0}\}$. We can assume that $\gamma$ contains  $\lambda_{\phi_0}$ in its interior and $Spec ({\mathcal{L}_{\phi_0}})\setminus\{\lambda_{\phi_0}\}$ in its exterior. In this way, each  $z \in \gamma$  belongs to the set $Res(\mathcal{L}_{\phi_0})$ and thus $(zI - \mathcal{L}_{\phi_0})$ has a bounded inverse. Furthermore, since $\gamma$ is compact and the existence of a bounded inverse is an open property (see Lemma \ref{openproperty}), there exists $\varepsilon>0$  small enough such that $B(\phi_0, \varepsilon) \subset \mathcal{P}_\sigma$ and 
$
\mbox{for all} \ z\in \gamma  \ \mbox{and} \  \mbox{ for all} \ \phi\in B(\phi_0,\varepsilon ) \  \mbox{we have } \! (z, \phi)\in \Omega$.  In particular, $\Omega$  contains the set $\gamma \times B(\phi_0, \varepsilon)$.

The {\em resolvent map}
\[
R(z,\phi) : = (zI - \mathcal{L}_{\phi})^{-1}
\]
is well-defined and bounded on $\Omega$.

In order to prove that the projection    $E(\phi)=\dfrac{1}{2\pi i}\displaystyle\int_{\gamma} (zI - \mathcal{L}_{\phi})^{-1}\, dz$ is analytic,  we will show that the resolvent map is analytic on $\Omega$.
%


Given $(z_0,\phi_0)\in \Omega$, using the proof of Lemma~\ref{openproperty} we can write
\begin{eqnarray}\label{resolvente}
R(z,\phi)& =& (z_0I - \mathcal{L}_{\phi_0} - [(z_0 - z)I + (\mathcal{L}_{\phi} - \mathcal{L}_{\phi_0})] )^{-1} \nonumber  \\ 
 &=&R(z_0, \phi_0) (I - [(z_0 - z)I + (\mathcal{L}_{\phi} - \mathcal{L}_{\phi_0})]  R(z_0, \phi_0))^{-1} \nonumber \\
& = & R(z_0, \phi_0) \sum_{n=0}^{\infty} \left[ ((z_0 - z)I + (\mathcal{L}_{\phi} - \mathcal{L}_{\phi_0}))  R(z_0, \phi_0)  \right]^n,
\end{eqnarray}
provided that $\| (z_0 - z)I + (\mathcal{L}_{\phi} - \mathcal{L}_{\phi_0})\| < 1/{\|R(z_0, \phi_0)\|}$.
Note that the inequality is satisfied in a small neighborhood  $V(z_0,\phi_0)$ of $(z_0, \phi_0)$ and 
 the convergence is uniform 
on compact subsets  of that neighborhood. Moreover, 
since the map $\phi \in \mathcal{P}_\sigma \mapsto \mathcal{L}_{\phi}$ is analytic at $\phi_0$  we can conclude that $R(z, \phi)$ can be expressed as a double  power series  in $(z-z_0)$ and $(\phi - \phi_0)$  in $V(z_0,\phi_0)$. This ensures that the resolvent map is analytic at $(z_0, \phi_0)$. Since the choice of $(z_0, \phi_0)$ was arbitrary  the resolvent map is analytic on $\Omega$.

Now to prove that the projection is analytic we write  $R(z, \phi)$ as a power series  in $(\phi - \phi_0)$ with coefficients depending on $z$. 
We can set $z_0=z$ in equation~\eqref{resolvente} obtaining 
$$R(z,\phi)= R(z, \phi_0) \sum_{n=0}^{\infty} \left[  (\mathcal{L}_{\phi} - \mathcal{L}_{\phi_0})  R(z, \phi_0)  \right]^n .$$
 
Again,  $ \mathcal{L}_{\phi}$ is analytic  at $\phi_0$ and therefore $ (\mathcal{L}_{\phi} - \mathcal{L}_{\phi_0}) $ can be written as a power series   in $(\phi - \phi_0)$. 
If we collect the terms which multiply $(\phi - \phi_0)^n$, we obtain the following series
expansion for $R(z,\phi)$ on $V(z,\phi_0)$: 
\begin{equation*}
R(z,\phi) = R(z, \phi_0) + \sum_{n=1}^{\infty} A_n(z) (\phi - \phi_0)^n,
\end{equation*}
where $A_n(z)$ are operator valued functions which are analytics on $\{\phi: \|\phi - \phi_0\|<\eta \}$ for some $\eta = \eta(z, \phi_0)$. 

By compactness we can cover 
the set $\{(z,\phi): z\in \gamma, \|\phi - \phi_0\|\leq \eta\}$ with a finite number of neighborhoods $V(z,\phi_0)$ as above. Therefore there exists $\eta>0$ small  enough such that for any $z\in \gamma$ and $\|\phi-\phi_0\|< \eta$ we have
\begin{equation*}
R(z,\phi) = R(z, \phi_0) + \sum_{n=1}^{\infty} A_n(z) (\phi - \phi_0)^n,
\end{equation*}
where $A_n(z)$ are continuous on $\gamma$ and the series converges uniformly in
norm.

Integrating over $\gamma$, we conclude that $E(\phi)$ has a norm convergent
power series expansion as follows
\begin{eqnarray*}
E(\phi) & = & \dfrac{1}{2\pi i} \int_{\gamma} R(z, \phi)\, dz\\
& = & \dfrac{1}{2\pi i} \int_{\gamma} \left[ R(z, \phi_0) + \sum_{n=1}^{\infty} A_n(z) (\phi - \phi_0)^n\right]dz\\
&= & E(\phi_0 ) + \frac{1}{2\pi i} \sum_{n=1}^{\infty}(\phi - \phi_0)^n \int_{\gamma} A_n(z)\,dz .
\end{eqnarray*}

Then the projection is analytic in a neighborhood of $\phi_0$. 
\end{proof}

Now we prove Theorem~\ref{analiticidade potencial}. We start by showing that the topological pressure function is analytic.

Given $\phi_0\in\mathcal{P}_\sigma$ let $\lambda_{\phi_0}$ be the spectral radius of $\mathcal{L}_{\phi_0}$. Consider $\gamma$ a closed smooth simple curve which contains $\lambda_{\phi_0}$ in its interior and separates $\lambda_{\phi_0}$ from the rest of the spectrum.  Recall that equation \eqref{eq.radius} gives us bounds for the spectral radius of a transfer operator depending only on the map $f$ (which is fix here) and on the potential.  From this, if $\phi$ is close enough to $\phi_0$ we infer that the spectral radius $\lambda_{\phi}$ is also in the interior of $\gamma$.
Moreover, by  considering a smaller neighborhood of $\phi_0$, if needed, we can assume by Lemma~\ref{openproperty} that  $\gamma$ also separates $\lambda_\phi$ from the rest of the spectrum of $\mathcal{L}_\phi$. Hence, applying Corollary~\ref{eigenprojection}, we conclude that 
$$ E_{\phi}:=E(\phi) =   \frac{1}{2\pi i}\int_{\gamma}(zI-\mathcal{L}_\phi)^{-1}\, dz$$
is the eigenprojection of $\lambda_\phi$
for $\mathcal{L}_\phi.$ 

We claim that the spectral radius function is analytic on a neighborhood of $\phi_0$. Indeed, since the family $\{\mathcal{L}_\phi\}_{\phi\in\mathcal{P}_\sigma}$ has the spectral gap property (and by Corollary~\ref{eigenprojection}) it follows that dim(Im$\left(E_\phi\right))=1$ for every $\phi\in\mathcal{P}_\sigma$. In particular,  there is  $\varphi\in C^0(M)$ such that $E_{\phi_0}(\varphi)\neq 0$  and by the well-known Hahn-Banach Theorem there exists $\eta\in\left(C^0(M)\right)^{*}$ satisfying $\eta(E_{\phi_0}(\varphi))\neq 0$. By continuity of $\eta(E_{\phi_0}(\varphi))$, we have  $\eta(E_{\phi}(\varphi))\neq 0$ for every $\phi$ in a small neighborhood of $\phi_0$. 

Now  define   the   mapping  $$\Psi(\phi):= \eta(\mathcal{L}_{\phi}(E_{\phi}(\varphi)) = \int \mathcal{L}_{\phi} \circ E_{\phi}(\varphi) \, d\eta .$$ 
By the analyticity of the transfer operator and the projection map, $\Psi$  is analytic at $\phi_0$. Corollary~\ref{eigenprojection} guarantees that Im$\left(E_{\phi}\right)$ is the eigenspace associated to $\lambda_{\phi}$ then it follows
\begin{eqnarray*}
\eta(\mathcal{L}_{\phi}(E_{\phi}(\varphi))= \int \mathcal{L}_{\phi} \circ E_{\phi}(\varphi) \, d\eta = \int \lambda_{\phi} E_{\phi}(\varphi) \, d\eta =  \lambda_{\phi} \eta(E_{\phi}(\varphi)).
\end{eqnarray*}
Since $\eta(E_{\phi}(\varphi))\neq 0$ we can write   $$ \lambda_{\phi}= \dfrac{\eta(\mathcal{L}_{\phi}(E_{\phi}(\varphi)))}{ \eta(E_{\phi}(\varphi))}$$
and conclude that  the map $\phi \in \mathcal{P}_\sigma \mapsto  \lambda_{\phi} $ is analytic at $\phi_0$. Recalling that for all $\phi \in \mathcal{P}_\sigma$ the topological pressure $P_{f}(\phi)$ satisfies $\lambda_{\phi}=\exp(P_{f}(\phi))$  we have  as an immediate consequence that the map $\phi \in \mathcal{P}_\sigma \mapsto P_{f}(\phi)$ is analytic.  Thus we prove item \ref{i} of Theorem~\ref{analiticidade potencial}. 

Let $E_{\phi}^0 = \left\{ \psi \in C^{\alpha}\left(M\right) : \int \psi \ d\nu_{ \phi} = 0 \right\}$ and let $E_{\phi}^1$ be the eigenspace associated to the spectral radius $\lambda_{\phi}$ of $\mathcal{L}_{\phi}$. As in Theorem~\ref{gap}, we decompose $C^{\alpha}\left(M\right)$ as a direct sum of $E_{\phi}^0$ and $E_{\phi}^1$: given $\varphi \in C^{\alpha}\left(M\right)$ we can write 
$$\varphi = \left[ \varphi - \left( \int\varphi \ d \nu_{ \phi}\right) \cdot h_{\phi}\right] + \left[ \left( \int \varphi \ d\nu_{\phi} \right)\cdot h_{\phi} \right]= \varphi_0 + \varphi_1$$
with $\varphi_0 \in E_{\phi}^0$  and $\varphi_1 \in E_{\phi}^1$.

Now, considering $\varphi\equiv \textbf{1}$, we derive  that $\varphi_1=h_{\phi}$ and this implies that $h_{\phi}$ is the projection of the function $\textbf{1}$ on the space $E_{\phi}^1.$  It follows from Corollary~\ref{eigenprojection}
$$h_{\phi}=E_{\phi}(\textbf{1})=\frac{1}{2\pi i}\int_{\gamma}(zI-\mathcal{L}_{\phi})^{-1}\, dz\, (\textbf{1}).$$

Applying Theorem~\ref{analiticidade} we conclude that $h_{\phi}$ varies analytically with respect to $\phi$ which proves item \ref{ii}. 

Moreover, since the projection of any $\varphi \in C^{\alpha}\left(M\right)$ on the space $E_{\phi}^1$ is given by $\varphi_1 = \left[ \left( \int \varphi \ d\nu_{\phi} \right)\cdot h_{\phi} \right]$ it follows 
$$E_{\phi}(\varphi)=\varphi_1=\int \varphi \ d\nu_{\phi}\cdot E_{\phi}(\textbf{1}).$$
Therefore from the relation
$$\nu_{\phi}(\cdot)=\frac{E_{\phi}(\cdot)}{E_{\phi}(\textbf{1})}$$
we obtain that the map $\phi\mapsto \nu_{\phi}\in (C^{\alpha}(M))^*$   is analytic, thus proving item \ref{iii} of Theorem~\ref{analiticidade potencial}. In particular, item \ref{iv}  
follows from the previous results since that $\mu_{\phi}=h_{\phi}\nu_{\phi}.$


	\section{Applications: a class of non-uniformly hyperbolic skew products}\label{skew product} 
	
	In this section we extend our results to a wider class by considering a family of skew products over non-uniformly expanding maps.
	
	Consider $N$ a compact metric space with distance $d$ and let $g:M\times N\rightarrow N$ be a continuous map uniformly contractive on $N$, i.e., there exists  $0<\lambda<1$ such that for all $x\in M$  and all $y_1,y_2\in N$   we have
	\begin{equation}\label{g}
	d\big(g(x, y_1), g(x,y_2)\big)\leq\lambda d(y_1,y_2).
	\end{equation}  
	Suppose that there exists some $\bar{y}\in N$ such that $g(x,\bar{y})=\bar{y}$ for every $x\in M.$ 
	We consider a family  $\mathcal S$ of skew-product maps $F:M \times N \to M \times N $ such that
	$$   F(x,y)= (f(x),g(x,y))  $$
	for all $(x,y)\in M\times N$, where the {\em base dynamics} $f:M\rightarrow M$ belongs to $\mathcal{F}$ and  is strongly topologically mixing and the fiber dynamics $g:M\times N\rightarrow N$ satisfies~\eqref{g}. 
	
	\begin{remark}
		If we can write $N=N_1\cup \cdots \cup N_n$, where  $N_1, \dots, N_n$ are pairwise disjoint compact sets then the condition above, $g(x, \bar{y})=\bar{y}$  for all $x\in M$, can be replaced by $g_i(x, y_i)=y_i$ for all $x\in M$ and some $y_i\in N_i$, $i=1, \dots, n$.
		That  is because  we can define $n$ fiber dynamics $g_i:M\times N_i\to N_i$ by $g_i(x, y)=g(x, y)$ for $y\in N_i$, for each $i=1,\dots, n$, since  $M\times N$ is a product space and $N=N_1\cup \cdots \cup N_n$. See Example~\ref{ferradura}.
	\end{remark}
	
		Given $\sigma\in(0,1)$ we say that a continuous potential $\Phi: M\times N \to \mathbb{R}$ is a {\em $\sigma$-hyperbolic potential for $F\in\mathcal S$} if the  topological pressure of the system $(F, \Phi)$ is equal to the relative pressure on the set~$\Sigma_{\sigma}(f)\times N$. More precisely, 
$$P_F(\phi, [\Sigma_{\sigma}(f)]^c\times N)<P_F(\phi,\Sigma_{\sigma}(f)\times N)=P_F(\phi).$$
	
	We consider the family 
	$\mathcal G_\sigma$  of pairs $(F,\Phi)\in\mathcal S\times C^\alpha(M\times N)$ such that \eqref{h1} holds for $f$ and  $\Phi$ is hyperbolic  for  $F$ satisfying condition~\eqref{2 estrelas}.
	The uniqueness of  the equilibrium state for $(F, \Phi) \in \mathcal G_\sigma$ was proven in \cite{ARS}. 
	
	Given  $\Phi:M\times N\to \mathbb{R}$ a H\"older continuous potential, hyperbolic for $F$ satisfying condition~\eqref{2 estrelas}. It was proved  in [\cite{ARS}, Section 5] that   $\Phi$ induces a H\"older continuous potential $\phi:M\to \mathbb{R}$ which is hyperbolic for the base dynamics $f$ and satisfies $ \Var(\phi)\leq \Var(\Phi)$.  Therefore,  the induced potential $\phi$ satisfies condition~\eqref{2 estrelas} as well.
	
	Moreover, by [\cite{ARS}, Lemma 5.3], the unique equilibrium state $\mu_{F, \Phi}$ associated to the system $(F, \Phi)$ is given by the push-forward $\pi_{\ast}\mu_{F, \Phi}=\mu_{f, \phi}$ where $\mu_{f, \phi}$ is the unique equilibrium state of $(f, \phi)$. In other words, for every Borel set $A$ of $M\times N$ we have 
	$$\mu_{F, \Phi}(A)=\mu_{f,\phi}(\pi(A)).$$
	
	Note that the map $\pi$ is analytic and does not depend on the potential~$\Phi$. From this and item \ref{iv} of Theorem~\ref{analiticidade potencial}, we can derive the analyticity of the equilibrium state  when we fix the skew product $F$ and vary the potential $\Phi$ within the family $\mathcal{G}_{\sigma}$. 
	
	\begin{Cor}\label{analiticidade F}
		The equilibrium state varies analytically on the potential within the family $\mathcal G_\sigma$.
	\end{Cor}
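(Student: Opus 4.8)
The plan is to reduce Corollary~\ref{analiticidade F} to item~\ref{iv} of Theorem~\ref{analiticidade potencial} by exploiting the semiconjugacy $\pi\colon M\times N\to M$ between $F$ and $f$. First I would recall the two structural facts already established in the excerpt: that $\Phi\in\mathcal{P}_\sigma$ for $F$ induces a potential $\phi=\phi(\Phi)$ on $M$ which lies in $\mathcal{P}_\sigma$ for $f$ (this uses $\Var(\phi)\le\Var(\Phi)$ together with the fact that \eqref{2 estrelas} is an open condition stable under passing to a potential with smaller variation), and that the equilibrium states are related by the push-forward identity $\pi_\ast\mu_{F,\Phi}=\mu_{f,\phi}$, equivalently $\mu_{F,\Phi}(A)=\mu_{f,\phi(\Phi)}(\pi(A))$ for every Borel set $A\subset M\times N$.

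Second, I would make precise the sense in which the map $\Phi\mapsto\mu_{F,\Phi}\in (C^\alpha(M\times N))^*$ is to be shown analytic, namely as a composition of three maps: the map $\Phi\mapsto\phi(\Phi)$ from (a neighbourhood in) $C^\alpha(M\times N)$ to $C^\alpha(M)$; the map $\phi\mapsto\mu_{f,\phi}\in(C^\alpha(M))^*$, which is analytic by Theorem~\ref{analiticidade potencial}\ref{iv}; and the pull-back by $\pi$, i.e. the bounded linear map $(C^\alpha(M))^*\to (C^\alpha(M\times N))^*$ sending a functional $\rho$ to $\rho\circ\pi^*$, where $\pi^*\colon C^\alpha(M\times N)\to C^\alpha(M)$ is precomposition with $\pi$ — wait, more carefully, $\pi$ pushes measures forward, so the relevant map on duals goes in the direction induced by $\pi_\ast$, and it is linear and continuous because $\psi\mapsto\psi\circ\text{(a section of }\pi)$ behaves well; in any case the identity $\int\psi\,d\mu_{F,\Phi}=\int \bar\psi\,d\mu_{f,\phi(\Phi)}$ for the appropriate $\bar\psi$ built from $\psi$ and $\pi$ exhibits $\mu_{F,\Phi}$ as a fixed continuous linear image of $\mu_{f,\phi(\Phi)}$. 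Since analyticity of operator- and functional-valued maps (in the sense of Section~\ref{anal}) is preserved under composition with bounded linear maps and under composition on the inside with analytic maps, the conclusion follows once the first map $\Phi\mapsto\phi(\Phi)$ is shown to be analytic.

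The main obstacle, and the step deserving genuine care, is therefore establishing that the induced-potential map $\Phi\mapsto\phi(\Phi)$ is analytic from a neighbourhood of $\Phi_0$ in $C^\alpha(M\times N)$ into $C^\alpha(M)$. Here I would go back to the construction of $\phi$ in \cite[Section 5]{ARS}: typically $\phi(x)$ is defined through the contraction $g(x,\cdot)$ and its fixed point structure (for instance $\phi(x)=\Phi(x,\zeta(x))$ for a suitable $\zeta$, or via a sum/limit along the stable fibre), so analyticity of $\Phi\mapsto\phi(\Phi)$ reduces to checking that this construction is linear in $\Phi$ (most natural constructions of the induced potential are linear, which makes analyticity automatic and trivial), or, if it is not linear, that it depends analytically on $\Phi$ because $g$ is fixed and the fibre contraction rate $\lambda<1$ gives uniform geometric convergence of the defining series. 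In the linear case there is nothing more to prove; in the general case one invokes that a uniformly convergent series of bounded linear maps in $\Phi$ defines an analytic map in the sense of Section~\ref{anal}. Once this is in hand, chaining the analytic maps $\Phi\mapsto\phi(\Phi)\mapsto\mu_{f,\phi(\Phi)}\mapsto\mu_{F,\Phi}$ completes the proof of Corollary~\ref{analiticidade F}.
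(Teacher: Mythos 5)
Your proposal follows essentially the same route as the paper: reduce to item \ref{iv} of Theorem~\ref{analiticidade potencial} via the induced potential $\phi(\Phi)$ of \cite[Section 5]{ARS} and the push-forward relation $\pi_\ast\mu_{F,\Phi}=\mu_{f,\phi}$, then observe that $\pi$ is fixed so analyticity passes through the composition. You are in fact more careful than the paper on the one point it glosses over, namely the analyticity (indeed linearity, since the induced potential is essentially $\Phi$ evaluated along the invariant section $\bar y$) of the map $\Phi\mapsto\phi(\Phi)$.
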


	In what follows, we state some statistical properties for the unique equilibrium state of $(F, \Phi) \in \mathcal G_\sigma$.

	The key idea to obtain the exponential decay of correlations for the equilibrium state associated to $(F, \Phi)$ (see the statement of the next result) is to disintegrate this measure as a product of conditional measures on stable fibers by the equilibrium state of the base dynamics. Details of a similar proof of this result can be analyzed in \cite{RS17}. Due to the similarity,  we choose to omit such proof here.

	\begin{Cor}\label{decaimento F}
		The equilibrium state $\mu_{F,\Phi}$ has exponential decay of correlations for H\"older continuous observables: there exists $0< \tau <1$ such that for every  $\varphi \in L^{1}(\mu_{F, \Phi})$ and $\psi \in C^{\alpha}(M\times N)$ there exists $K(\varphi, \psi)>0$ so that
		$$\left| \int \left(  \varphi \circ F^{n} \right)\psi\, d\mu_{F, \Phi} - \int \varphi\, d\mu_{F, \Phi} \int \psi\, d \mu_{F, \Phi}\right| \leq K(\varphi, \psi)\tau^{n}\, ,  \quad \forall n\geq 1. $$
	\end{Cor}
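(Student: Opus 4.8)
The plan is to derive Corollary~\ref{decaimento F} from the exponential decay of correlations for the base system, Proposition~\ref{decaimento} (equivalently Theorem~\ref{decaimento f}), by exploiting the skew-product structure together with the uniform contraction along the fibers. The key structural fact we will use is that $\mu_{F,\Phi} = h_{F,\Phi}\,\nu_{F,\Phi}$ can be disintegrated with respect to the projection $\pi : M\times N \to M$, writing $d\mu_{F,\Phi}(x,y) = d\mu_{F,\Phi}^x(y)\, d\mu_{f,\phi}(x)$, where $\{\mu_{F,\Phi}^x\}_{x\in M}$ are the conditional measures supported on the stable fibers $\{x\}\times N$ and $\pi_\ast\mu_{F,\Phi} = \mu_{f,\phi}$ as recalled above. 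Since the fiber dynamics $g$ is $\lambda$-contracting with a common fixed point $\bar y$, the $n$-th iterate $F^n$ contracts fibers at rate $\lambda^n$, so composing an observable on $M\times N$ with $F^n$ is, up to an error of order $\lambda^n$, the same as composing a function on the base with $f^n$.

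First I would reduce to H\"older observables: exactly as in the proof of Proposition~\ref{decaimento}, the general $\varphi \in L^1(\mu_{F,\Phi})$ case follows from the H\"older case by writing $\int(\varphi\circ F^n)\psi\,d\mu_{F,\Phi} = \int \varphi\cdot (\mathcal{L}_{F,\Phi}^n(\psi\, h_{F,\Phi})/h_{F,\Phi})\,d\mu_{F,\Phi}$ and bounding $\|\mathcal{L}_{F,\Phi}^n(\psi h_{F,\Phi})/h_{F,\Phi} - 1\|_0$ via the spectral gap, after splitting $\psi h_{F,\Phi}$ into cone elements when necessary; here $\varphi$ need only be $L^1$. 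So assume $\varphi,\psi \in C^\alpha(M\times N)$. Define $\tilde\psi : M \to \mathbb{R}$ by $\tilde\psi(x) = \int \psi(x,y)\, d\mu_{F,\Phi}^x(y)$; one checks $\tilde\psi \in C^\alpha(M)$ using H\"older continuity of $\psi$ and of the disintegration in $x$ (the latter following from the contraction along fibers). Then I would split
\[
\int (\varphi\circ F^n)\psi\, d\mu_{F,\Phi} - \int\varphi\, d\mu_{F,\Phi}\int\psi\, d\mu_{F,\Phi}
\]
into: (a) a term comparing $\varphi\circ F^n$ on the fiber through $x$ with its value $\tilde\varphi_n(x)$ suitably averaged/anchored at the contracted point, which is $O(\lambda^{n\alpha})$ by the $\alpha$-H\"older property of $\varphi$ and the fiber contraction $d(F^n(x,y),F^n(x,y'))\le \lambda^n d(y,y')$; and (b) a term of the form $\int (\tilde\varphi_n\circ f^n)\tilde\psi\, d\mu_{f,\phi} - \int\tilde\varphi_n\, d\mu_{f,\phi}\int\tilde\psi\, d\mu_{f,\phi}$ plus lower order corrections, to which Proposition~\ref{decaimento} applies and yields a bound $K(\tilde\varphi_n,\tilde\psi)\,\tau_0^n$. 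Choosing $\tau := \max\{\tau_0,\lambda^\alpha\}^{1/2}$ (or any $\tau$ strictly between the two rates and $1$) and collecting constants into $K(\varphi,\psi)$ gives the claim.

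The main obstacle is the bookkeeping in step (b): one must control the $C^\alpha$-norm of the auxiliary base observable $\tilde\varphi_n$ \emph{uniformly in $n$} (or with at most subexponential growth) so that the constant $K(\tilde\varphi_n,\tilde\psi)$ coming from Proposition~\ref{decaimento} does not blow up with $n$. This is where the uniform fiber contraction is essential: because $F^n$ pushes the whole fiber $\{x\}\times N$ into a set of diameter $\le \lambda^n\,\mathrm{diam}(N)$ around $F^n(x,\bar y) = (f^n(x), \bar y)$, the function $x\mapsto \varphi(F^n(x,y))$ differs from a genuine function of $f^n(x)$ by $O(\lambda^{n\alpha})$, and its H\"older seminorm in $x$ is controlled by $|\varphi|_\alpha$ times the (uniformly bounded) distortion of $f^n$ restricted to hyperbolic pre-balls — or, more cleanly, one absorbs the $f^n$ entirely into the correlation integral for the base and only needs $\tilde\psi \in C^\alpha(M)$ fixed and $\tilde\varphi_n \in L^1(\mu_{f,\phi})$ with uniformly bounded $L^1$-norm, which is immediate. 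With that observation the estimate closes. Since the argument is essentially identical to the one in \cite{RS17} for partially hyperbolic horseshoes, differing only in that the base is now a general non-uniformly expanding map already handled by Proposition~\ref{decaimento}, we omit the detailed computations.
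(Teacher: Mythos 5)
Your proposal is correct and follows essentially the same route the paper takes: disintegrating $\mu_{F,\Phi}$ over the projection $\pi$ into conditional measures on the stable fibers, using the uniform fiber contraction to replace $\varphi\circ F^n$ by a base observable up to an error $O(\lambda^{n\alpha})$, and then invoking the decay of correlations for $(f,\phi)$ (Proposition~\ref{decaimento}), exactly as in \cite{RS17}, to which the paper itself defers the details. Your remarks on the uniformity of the constant (only the $L^1$-norm of the $n$-dependent base observable enters, while the H\"older observable $\tilde\psi$ is fixed) correctly address the one point where the bookkeeping could go wrong.
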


		We also obtain a central limit theorem for the equilibrium state $\mu_{F, \Phi}$ of the skew-product $F$ with respect to a potential $\Phi$ as considered above.
	
	\begin{Cor}\label{TCL F}
		Let $\varphi $ be an $\alpha$-H\"older continuous function and let $\tilde{\sigma} \geq 0$ be defined by 
		$$\tilde{\sigma}^{2}= \int \psi^2  \ d\mu_{F, \Phi} + 2\displaystyle\sum_{n=1}^{\infty}  \int \psi (\psi \circ F^n) \ d\mu_{F, \Phi} \quad \mbox{where} \quad \psi = \varphi - \int \varphi \ d\mu_{F, \Phi} .$$
		Then $\tilde{\sigma}$ is finite and $\tilde{\sigma} = 0$ if and only if $\varphi = u \circ F - u $ for some $u \in L^{2}(\mu_\Phi)$. On the other hand, if $\tilde{\sigma} >0 $ then given any interval $A\subset \mathbb{R}$, 
		\[ \mu_{F, \Phi}\! \left(\! x\! \in \!\! M \!\times \!N: \frac{1}{\sqrt{n}} \sum_{j=0}^{n} \left(\!\varphi(F^j(x))\! - \! \int \! \varphi \ d\mu_{F, \Phi} \right) \! \! \in A \! \right)\! \to \frac{1}{\tilde{\sigma} \sqrt{2\pi}}\! \int_{A}\! e^{-\frac{t^2}{2\tilde{\sigma}^2}}  dt,\] 
		as $n$ goes to infinity. 
	\end{Cor}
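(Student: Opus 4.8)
The plan is to reduce the statement for the skew product $(F,\Phi)$ to the corresponding statement for the base dynamics $(f,\phi)$, exactly as was done for the decay of correlations in Corollary~\ref{decaimento F}. First I would recall from [\cite{ARS}, Section 5] that the H\"older potential $\Phi$ induces a H\"older potential $\phi:M\to\mathbb R$ which is hyperbolic for $f$ and satisfies condition~\eqref{2 estrelas}, so that $(f,\phi)\in\mathcal H_\sigma$, and that the unique equilibrium state satisfies $\pi_*\mu_{F,\Phi}=\mu_{f,\phi}$, where $\pi:M\times N\to M$ is the canonical projection. The key observation is that the central limit theorem is an abstract consequence of the exponential decay of correlations via Gordin's theorem (the non-invertible version in [\cite{VianaStoch}, Theorem 2.11]), precisely as invoked for Theorem~\ref{TCL}; hence, having already established Corollary~\ref{decaimento F}, one can either apply Gordin's theorem directly to $(F,\Phi,\mu_{F,\Phi})$ or transfer the statement fiberwise.

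The steps I would carry out, in order, are as follows. (1) Fix $\varphi\in C^{\alpha}(M\times N)$ and set $\psi=\varphi-\int\varphi\,d\mu_{F,\Phi}$; note $\int\psi\,d\mu_{F,\Phi}=0$. (2) Show $\tilde\sigma$ is finite: since $\psi\in L^2(\mu_{F,\Phi})$ and, by Corollary~\ref{decaimento F}, the correlations $\int\psi(\psi\circ F^n)\,d\mu_{F,\Phi}$ decay like $K\tau^n$ with $0<\tau<1$, the series defining $\tilde\sigma^2$ converges absolutely. (3) Verify the coboundary dichotomy: $\tilde\sigma=0$ iff $\psi$ is an $L^2$-coboundary, $\varphi-\int\varphi\,d\mu_{F,\Phi}=u\circ F-u$ for some $u\in L^2(\mu_{F,\Phi})$ --- this is the content of Gordin's theorem together with the fact that $\mu_{F,\Phi}$ is $F$-invariant and the decay of correlations gives the required summability of conditional expectations along the natural filtration. (4) When $\tilde\sigma>0$, conclude the convergence in distribution of $\frac{1}{\sqrt n}\sum_{j=0}^{n}\bigl(\varphi(F^j(x))-\int\varphi\,d\mu_{F,\Phi}\bigr)$ to $\mathcal N(0,\tilde\sigma^2)$ by applying the martingale central limit theorem underlying Gordin's result; equivalently, one may disintegrate $\mu_{F,\Phi}$ over stable fibers as in the proof of Corollary~\ref{decaimento F} and use that Birkhoff averages of $\varphi$ along $F$-orbits are uniformly exponentially close (because of the contraction \eqref{g} in the fibers) to Birkhoff averages of the induced observable along $f$-orbits, so that the CLT for $(f,\phi)$ from Theorem~\ref{TCL} passes to $(F,\Phi)$ with the same limiting variance.

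The main obstacle is Step (4) in the fiber-disintegration route: one must check that replacing the skew-product observable $\varphi$ by a function essentially depending only on the base does not change the limiting variance, i.e. that the difference between $S_n\varphi$ (along $F$) and $S_n\tilde\varphi$ (along $f$, for the appropriate induced $\tilde\varphi$) is bounded in $L^2(\mu_{F,\Phi})$ uniformly in $n$ and hence contributes nothing after division by $\sqrt n$. This uses the uniform contraction of $g$ on fibers together with the H\"older regularity of $\varphi$ to get a telescoping estimate $|\varphi(F^j(x,y))-\varphi(F^j(x,y'))|\le C\lambda^{j\alpha}d(y,y')^{\alpha}$, so the cocycle difference is absolutely summable. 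Since the proof is entirely parallel to [\cite{RS17}, Theorem E] and to the proof of Corollary~\ref{decaimento F}, I would present the reduction in detail and then refer the reader to \cite{RS17} for the routine estimates, exactly as done for the preceding corollary.
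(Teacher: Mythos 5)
Your proposal is correct and follows essentially the same route as the paper: the paper's proof consists precisely of invoking Gordin's theorem (the non-invertible version in \cite{VianaStoch}) once the exponential decay of correlations of Corollary~\ref{decaimento F} is in hand, exactly as in the proof of Theorem~\ref{TCL}. The alternative fiber-disintegration route you sketch in Step (4) is not needed and is not what the paper does, but your primary argument matches the paper's.
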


	The proof of Corollary~\ref{TCL F} also follows from Gordin's Theorem, which can be applied once we have established the exponential decay of correlations. Therefore, in view of Corollary \ref{decaimento F}, the proof of Corollary~\ref{TCL F} is exactly analogous to the one of Theorem~\ref{TCL}.


\section{Examples}\label{exemplos}

In this section we describe some examples of systems which satisfy our results. We start by presenting a robust class of non-uniformly expanding maps introduced by Alves, Bonatti and Viana \cite{ABV} and studied by Arbieto, Matheus and Oliveira \cite{AMO}, Oliveira and Viana \cite{OV08}, Varandas and Viana \cite{VV}. 

\begin{Ex} \normalfont{Consider $f:M\rightarrow M$ a $C^{1}$ local diffeomorphism defined on a compact manifold $M$. For  $\delta>0$ small and $\sigma<1$, consider  a covering $\mathcal Q=\left\{Q_{1},\dots, Q_{q}, Q_{q+1},\dots, Q_{s}\right\}$ of $M$ by domains of injectivity for $f$ and a  region $\textsl{A}\subset M$ satisfying:
		\begin{enumerate}
			\item[(H1)]   $\|Df^{-1}(x)\|\leq 1+\delta$, for every $x\in\textsl{A}$;
			\item[(H2)] $\|Df^{-1}(x)\|\leq \sigma $, for every $x\in M\setminus\textsl{A}$;
			\item[(H3)]   $A$   can be covered by $q$ elements of the partition $\mathcal Q$ with $q<\deg(f).$
		\end{enumerate}
		The authors aforementioned showed that there exists a constant $\sigma\in(0,1)$ and a set $H\subset M$ such that for every $x\in H$ we have
		\begin{eqnarray*} 
			\limsup_{n\rightarrow+\infty}\frac{1}{n}\sum_{i=0}^{n-1}\log\|Df(f^{j}(x))^{-1}\|\leq-\sigma .
		\end{eqnarray*}
		Furthermore, it was proved that  for a H\"older continuous potential $\phi:M\rightarrow\mathbb{R}$ with \emph{small variation}, i.e. $$\sup\phi - \inf\phi < \log\deg(f)-\log q,$$ it follows that the relative pressure $P(\phi, H)$ satisfies 
		$$P_{f}(\phi, H^{c})<P_{f}(\phi, H)=P_{f}(\phi).$$
		and thus $\phi$ is $\sigma$-hyperbolic for $f$. 
		
		Denote by $\mathcal{F}$ the class of $C^1$ local diffeomorphisms satisfying the conditions (H1)-(H3) and assume that every $f\in \mathcal{F}$ is strongly topologically mixing. Let $\mathcal{H}$ be the family of pairs $(f,\phi)$, such that $f \in \mathcal{F} $  and $\phi:M\to\mathbb R$  is H\"older continuous,  with small variation  and satisfies condition~\eqref{2 estrelas}. Our results imply that  in this family the equilibrium state has exponential decay of correlations, satisfies a central limit theorem and varies analytically with the potential.}
\end{Ex}

In the next example we present a family of intermittent maps, although it is a particular case of the previous example, we present it here because 
	the   reader can  check the hypotheses for the results in this paper more easily.  Moreover,  we want to compare our results for the equilibrium state with some of the ones existing in the literature also for the absolutely continuous invariant probability measure.

\begin{Ex}\normalfont{Let $\beta \in(0,1)$ be a positive constant  and consider  the local diffeomorphism defined in $S^{1}$ by
		
		$$
		f_{\beta}(x)=\left\{
		\begin{array}{cc}
		x(1+2^{\beta}x^{\beta}),&\; \textit{\normalfont{if}}\;\; 0\leq x\leq \frac{1}{2}\\\\
		2x-1,&\; \textit{\normalfont{if}}\;\; \frac{1}{2}\leq x\leq 1.
		\end{array}
		\right.
		$$
		It is straightforward to check that $f_{\beta}$ is strongly topologically mixing. Consider $\mathcal{F}$ the class of $C^1$ local diffeomorphisms $\{f_{\beta}\}_{\beta\in(0,1)}$ and let $\phi:S^1\to\mathbb R$ be H\"older continuous with small variation and satisfying condition~\eqref{2 estrelas}.

		Then the unique equilibrium state satisfy a central limit theorem, has exponential decay of correlations, and varies analytically with the potential.

  We remark that the statistical properties mentioned above were previously obtained in \cite{riveraCMP}, along with the real analyticity of the topological pressure. We note that  the meaning of analyticity in this paper differs from the one in the aforementioned work.  
	
	Now, still considering the intermittent maps, we discuss a particular family of potentials of intrinsic interest in the literature. 
		
		For each $\beta\in (0, 1)$ consider $\{\phi_{\beta, t}\}_t$ the family of potentials defined by $\phi_{\beta, t}=-t\log|f'_\beta|$.  
 The variation of $\phi_{\beta, t}$ can be made small if $t$ is small:		\begin{eqnarray*}
			\left|\phi_{\beta, t}(x)-\phi_{\beta, t}(y)\right|=\left|-t\log|f'_\beta|(x)+t\log|f'_\beta|(y)|\right|	&=&|t|\log\frac{|f'(y)|}{|f'(x)}\\
			&\leq &|t|\log(2+\beta). 
		\end{eqnarray*}
Then for $|t|\leq t_0$ sufficiently small this family satisfies the hypotheses of our results.
In particular, this implies that for $|t|\leq t_0$ the topological pressure function $t\to P_{f_\beta}(-t\log|f'_\beta|)$ is analytic as well as the equilibrium state.
		
		We point out that our results can be applied for $|t|$ small, the scenario outside this range can be quite different. For example when $t=1$, it is well-known that $\phi_\beta=-\log|f'_{\beta}|$ admits two ergodic equilibrium states: the Dirac measure centered at the fixed point and the unique invariant probability measure
	 absolutely continuous with respect to Lebesgue (a.c.i.p.).
 
For the a.c.i.p. the correlations for  H\"older observables decay polynomially [\cite{LSV},\cite{Young99}]
		The  polynomial decay is optimal  \cite{Sarig2002}.	 Also, a central limit theorem holds  for $\beta<1/2$ \cite{Sarig2002}  and  holds for $\beta >1/2$ considering  observables vanishing in a neighborhood of the fixed point and with zero integral \cite{Gouezel2002}. Moreover, in \cite{Baladi-Todd} and in \cite{Korepanov2016}, independently, it was proven that  the a.c.i.p.  is differentiable.

		To end our discussion about this model we highlight that considering a one parameter family of potentials whose derivative near to zero behaves as a {\it polynomial}, Sarig \cite{sarig-pressao} has proven a phase transition for its topological pressure. In other words, there exists a  critical parameter for which the topological pressure is not analytic.	 
}

\end{Ex}


As an application of our corollaries we describe a family of partially hyperbolic horseshoes. This class of maps was defined by D\'iaz, Horita, Rios and Sambarino in \cite{diazetal} and has been studied in several works [see \cite {LOR}, \cite{RV16} and \cite{RS15}]. 
In particular, in \cite{RS15}, it was shown that this family can be modeled by a skew product which base dynamics is strongly topologically mixing and non-uniformly expanding.

\begin{Ex} \label{ferradura} \normalfont{Consider the cube $R= [0,1]\times[0,1]\times[0,1]\subset\mathbb{R}^3$ and  the parallelepipeds
		$$R_0 =[0,1]\times [0,1]\times [0,1/6]\quad \mbox{and} \quad R _1=[0,1]\times [0,1]\times [5/6,1].$$ 
		For $(x,y,z)\in R_0$ consider a map defined as
		$$ F_{0}(x,y,z) =(\rho x , f(y),\beta z),$$
		where $0 < \rho <{1/3}$, $\beta> 6$ and  $$f(y) =\frac {1}{1 - \left(1-\frac{1}{y}\right)e^{-1}}.$$
		For $(x,y,z)\in R_1$ consider a map defined  as
		$$F_{1}(x,y,z)  = \left(\frac{3}{4}- \rho x , \eta(1 - y) ,\beta_{1} \left(z - \frac{5}{6} \right)\right),$$
		where  $0<\eta< {1/3}$ and $3< \beta_1 < 4$.
		We define the horseshoe map $F$ on $R$ as
		$$F\vert_{R_0}=F_0,\quad F\vert_{R_1}=F_1,
		$$
		with  $  R\setminus(R_0\cup R_1)$ being mapped injectively outside $R$.
		
		For fixed parameters  $\rho, \beta, \beta_1$ and $\eta$ satisfying conditions above, the non-wandering set of $F$ is  partially hyperbolic, see  \cite{diazetal}.
		Let $\Omega$ be the maximal invariant set for $F^{-1}$ on the cube $R$ and consider $\Phi:R_0\cup R_1\to \mathbb{R}$ a H\"older continuous potential with \emph{small variation}, i.e.		\begin{equation}\label{potencial ferradura}
		\sup\Phi - \inf\Phi < \frac{\log\omega}{2}, \quad \mbox{where}\quad \displaystyle\omega=\frac{1+\sqrt{5}}{2}.
		\end{equation}
		The partially hyperbolic horseshoe admits only one equilibrium state associated to the potential $\Phi$ with small variation [see \cite{RS17}]. Moreover, in \cite{RS15} it was shown that small variation implies that the potential is hyperbolic.  }
	Also in  \cite{RS15}, it was proved that the  map $F^{-1}$  can be written as a skew product whose base dynamics, which they call projected map, is  strongly topologically mixing and non-uniformly expanding. Moreover the fiber dynamics is a uniform contraction. 
	
	Let  $\mathcal{S}$ be the family of partially hyperbolic horseshoes $F$, depending on the parameters $\rho, \beta, \beta_1$ and $\eta$ as above. 
	Consider $\mathcal{G}$ the family of pairs $(F^{-1}, \Phi)$ such that $F\in  \mathcal{S}$ and $\Phi$ satisfies \eqref{potencial ferradura} and condition~\eqref{2 estrelas}. 
	Note that  each pair $(F^{-1}, \Phi) \in \mathcal{G}$ satisfies the hypotheses of our results. Since the equilibrium state, $\mu_{F^{-1}, \Phi}$, associated to $(F^{-1}, \Phi)$ coincides with the equilibrium state, $\mu_{F, \Phi}$, associated to  $(F, \Phi)$  we can  apply our results to  $\mu_{F, \Phi}$.
	In other words,  the correlations of  the equilibrium state  $\mu_{F, \Phi}$  decay exponentially, a central limit theorem holds and $\mu_{F, \Phi}$  varies analytically with respect to the potential.
We note that the statistical properties were previously obtained in \cite{RS17}, the novelty is the analyticity. 
\end{Ex}


\begin{thebibliography}{99}
    
    
    
    \bibitem{ABV}
    J.~F. Alves, C.~Bonatti, and M.~Viana.
    \newblock {S}{R}{B} measures for partially hyperbolic systems whose central
    direction is mostly expanding.
    \newblock {\em Inventiones mathematicae}, 140:351--398, 2000.
    
    \bibitem{ARS}
    J.~F. Alves, V.~Ramos, and J.~Siqueira.
    \newblock Equilibrium stability for non-uniformly hyperbolic systems.
    \newblock {\em Ergodic Theory and Dynamical Systems}, pages 1--24, 2018.
    
    \bibitem{AMO}
    A.~Arbieto, C.~Matheus, and K.~Oliveira.
    \newblock Equilibrium states for random non-uniformly expanding maps.
    \newblock {\em Nonlinearity}, 17(2):581--593, 2004.
    
    \bibitem{Baladi}
    V.~Baladi.
    \newblock {\em Positive Transfer Operators and Decay of Correlations}.
    \newblock Advanced series in nonlinear dynamics. World Scientific Publishing
    Company, 2000.
    
    \bibitem{Baladi-Smania-2012}
    V.~Baladi and D.~Smania.
    \newblock Linear response for smooth deformations of generic nonuniformly
    hyperbolic unimodal maps.
    \newblock {\em Annales scientifiques de l'\'Ecole Normale Sup\'erieure}, Ser.
    4, 45(6):861--926, 2012.
    
    \bibitem{Baladi-Todd}
    V.~Baladi and M.~Todd.
    \newblock Linear response for intermittent maps.
    \newblock {\em Communications in Mathematical Physics}, 347:857--874, 08 2015.
    
    \bibitem{Birkhoff}
    G.~Birkhoff.
    \newblock {\em Lattice Theory}.
    \newblock Number v. 25,pt. 2 in American Mathematical Society colloquium
    publications. American Mathematical Society, 1940.
    
    \bibitem{BCV}
    T.~Bomfim, A.~Castro, and P.~Varandas.
    \newblock Differentiability of thermodynamical quantities in non-uniformly
    expanding dynamics.
    \newblock {\em Advances in Mathematics}, 292:478--528, 2016.
    
    \bibitem{Bowen71}
    R.~Bowen.
    \newblock Entropy for group endomorphisms and homogeneous spaces.
    \newblock {\em Transactions of the American Mathematical Society},
    153:401--414, 1971.
    
    \bibitem{Bruin-Todd2008}
    H.~Bruin and M.~Todd.
    \newblock Equilibrium states for interval maps: Potentials with $\sup
    \varphi-\inf \varphi < htop(f)$.
    \newblock {\em Communications in Mathematical Physics}, 283(3):579--611, Aug
    2008.
    
    \bibitem{BuzziSarig}
    J.~Buzzi and O.~Sarig.
    \newblock Uniqueness of equilibrium measures for countable markov shifts and
    multidimensional piecewise expanding maps.
    \newblock {\em Ergodic Theory and Dynamical Systems}, 23(5):1383--1400, 2003.
    
    \bibitem{CV}
    A.~Castro and P.~Varandas.
    \newblock Equilibrium states for non-uniformly expanding maps: Decay of
    correlations and strong stability.
    \newblock {\em Annales de l'Institut Henri Poincar\'e C, Analyse non
        lin\'eaire}, 30(2):225 -- 249, 2013.
    
    \bibitem{Clime2018}
    V.~Climenhaga, Y.~Pesin, and A.~Zelerowicz.
    \newblock Equilibrium states in dynamical systems via geometric measure theory.
    \newblock {\em Bulletin of the American Mathematical Society}, pages 569--610,
    03 2018.
    
    \bibitem{Clime2016}
    V.~Climenhaga and D.~J. Thompson.
    \newblock Unique equilibrium states for flows and homeomorphisms with
    non-uniform structure.
    \newblock {\em Advances in Mathematics}, 303:745 -- 799, 2016.
    
    \bibitem{diazetal}
    L.~J. D\'iaz, V.~Horita, I.~Rios, and M.~Sambarino.
    \newblock Destroying horseshoes via heterodimensional cycles: generating
    bifurcations inside homoclinic classes.
    \newblock {\em Ergodic Theory and Dynamical Systems}, 29(2):433--474, 2009.
    
    \bibitem{Benoit}
    P.~Giulietti, B.~Kloeckner, A.~Lopes, and D.~Marcon.
    \newblock The calculus of thermodynamical formalism.
    \newblock {\em Journal of the European Mathematical Society}, 20:2357--2412, 08
    2015.
    
    \bibitem{Gouezel2002}
    S.~Gou\"{e}zel.
    \newblock Central limit theorem and stable laws for intermittent maps.
    \newblock {\em Probability Theory and Related Fields}, 128:82--122, 12 2002.
    
    \bibitem{Hofbauer1982}
    F.~Hofbauer and G.~Keller.
    \newblock Ergodic properties of invariant measures for piecewise monotonic
    transformations.
    \newblock {\em Mathematische Zeitschrift}, 180:119--140, 1982.
    
    \bibitem{Iommi-Todd}
    G.~Iommi and M.~Todd.
    \newblock Natural equilibrium states for multimodal maps.
    \newblock {\em Communications in Mathematical Physics}, 300:65--94, 07 2010.
    
    \bibitem{Kato}
    T.~Kato.
    \newblock {\em Perturbation Theory for Linear Operators}.
    \newblock Classics in Mathematics. Springer Berlin Heidelberg, 1995.
    
    \bibitem{Ke}
    G.~Keller.
    \newblock Un th\'eor\`eme de la limite centrale pour une classe de
    transformations monotones par morceaux.
    \newblock {\em C. R. Acad. Sci., Paris, S\'er. A}, 291:155--158, 1980.
    
    \bibitem{Korepanov2016}
    A.~Korepanov.
    \newblock Linear response for intermittent maps with summable and nonsummable
    decay of correlations.
    \newblock {\em Nonlinearity}, 29(6):1735--1754, may 2016.
    
    \bibitem{LOR}
    R.~Leplaideur, K.~Oliveira, and I.~Rios.
    \newblock Equilibrium states for partially hyperbolic horseshoes.
    \newblock {\em Ergodic Theory and Dynamical Systems}, 31(1):179--195, 2011.
    
    \bibitem{riveraCMP}
    H.~Li and J.~Rivera-Letelier.
    \newblock Equilibrium states of weakly hyperbolic one-dimensional maps for
    {H}\"{o}lder potentials.
    \newblock {\em Communications in Mathematical Physics}, 328(1):397--419, 2014.
    
    \bibitem{Liverani}
    C.~Liverani.
    \newblock Decay of correlations.
    \newblock {\em Annals of Mathematics}, 142(2):239--301, 1995.
    
    \bibitem{Liverani2}
    C.~Liverani.
    \newblock Decay of correlations for piecewise expanding maps.
    \newblock {\em Journal of Statistical Physics}, 78:1111--1129, 1995.
    
    \bibitem{LSV}
    C.~Liverani, B.~Saussol, and S.~Vaienti.
    \newblock A probabilistic approach to intermittency.
    \newblock {\em Ergodic Theory Dynam. Systems}, 19(3):671--685, 1999.
    
    \bibitem{Melbourne-Nicol}
    I.~Melbourne and M.~Nicol.
    \newblock Statistical properties of endomorphisms and compact group extensions.
    \newblock {\em Journal of the London Mathematical Society}, 70(2):427--446,
    2004.
    
    \bibitem{OV08}
    K.~Oliveira and M.~Viana.
    \newblock Thermodynamical formalism for robust classes of potentials and
    non-uniformly hyperbolic maps.
    \newblock {\em Ergodic Theory and Dynamical Systems}, 28(2):501--533, 2008.
    
    \bibitem{Pes}
    Y.B. Pesin.
    \newblock {\em Dimension Theory in Dynamical Systems: Contemporary Views and
        Applications}.
    \newblock Chicago Lectures in Mathematics. University of Chicago Press, 2008.
    
    \bibitem{RS17}
    V.~Ramos and J.~Siqueira.
    \newblock On equilibrium states for partially hyperbolic horseshoes: Uniqueness
    and statistical properties.
    \newblock {\em Bulletin of the Brazilian Mathematical Society, New Series},
    48(3):347--375, Sep 2017.
    
    \bibitem{RV16}
    V.~Ramos and M.~Viana.
    \newblock Equilibrium states for hyperbolic potentials.
    \newblock {\em Nonlinearity}, 30(2):825--847, jan 2017.
    
    \bibitem{RS15}
    I.~Rios and J.~Siqueira.
    \newblock On equilibrium states for partially hyperbolic horseshoes.
    \newblock {\em Ergodic Theory and Dynamical Systems}, 38(1):301--335, 2018.
    
    \bibitem{Ruelle68}
    D.~Ruelle.
    \newblock Statistical mechanics of a one-dimensional lattice gas.
    \newblock {\em Communications in Mathematical Physics}, 9(4):267--278, 1968.
    
    \bibitem{Ruelle78}
    D.~Ruelle.
    \newblock {\em Thermodynamic Formalism: The Mathematical Structures of
        Classical Equilibrium Statistical Mechanics}.
    \newblock Encyclopedia of mathematics and its applications. Addison-Wesley
    Publishing Company, Advanced Book Program, 1978.
    
    \bibitem{sarig_etds}
    O.~Sarig.
    \newblock Thermodynamic formalism for countable markov shifts.
    \newblock {\em Ergodic Theory and Dynamical Systems}, 19:1565--1593, 12 1999.
    
    \bibitem{sarig-pressao}
    O.~Sarig.
    \newblock Phase transitions for countable {M}arkov shifts.
    \newblock {\em Communications in Mathematical Physics}, 217(3):555--577, 2001.
    
    \bibitem{Sarig2002}
    O.~Sarig.
    \newblock Subexponential decay of correlations.
    \newblock {\em Inventiones mathematicae}, 150:629--653, 01 2002.
    
    \bibitem{NSarig}
    O.~Sarig.
    \newblock Lecture notes on thermodynamic formalism for topological markov
    shifts.
    \newblock 2009.
    
    \bibitem{Sinai}
    Y.~Sinai.
    \newblock Gibbs measures in ergodic theory.
    \newblock {\em Russian Mathematical Surveys}, 27(4):21, 1972.
    
    \bibitem{VV}
    P.~Varandas and M.~Viana.
    \newblock Existence, uniqueness and stability of equilibrium states for
    non-uniformly expanding maps.
    \newblock {\em Annales de l'Institut Henri Poincar\'e C, Analyse non
        lin\'eaire}, 27(2):555 -- 593, 2010.
    
    \bibitem{VianaStoch}
    M.~Viana.
    \newblock {\em Stochastic dynamics of deterministic systems}.
    \newblock Lecture Notes XXI Bras. Math. Colloq. IMPA, Rio de Janeiro, 1997.
    
    \bibitem{Young}
    L-S. Young.
    \newblock Statistical properties of dynamical systems with some hyperbolicity.
    \newblock {\em Annals of Mathematics. Second Series}, 147(3):585--650, 1998.
    
    \bibitem{Young99}
    L-S. Young.
    \newblock Recurrence times and rates of mixing.
    \newblock {\em Israel Journal of Mathematics}, 110:153--188, 1999.
    
\end{thebibliography}
\end{document}